\documentclass[12pt]{amsart}

\usepackage{amscd,amssymb,amsfonts,amsmath,amstext,amsthm,hyperref,bbm}
\usepackage[autolanguage]{numprint}
\usepackage[square,numbers,sort]{natbib}
\usepackage{doi}
\usepackage{mathscinet}
%\usepackage{mathtools}

%%%%%%%%%%%%%%%%%%%%%%%%%%%%%%%%%%%%%%%%%%%%%%%%%%%%%%%%%%%%%%
% Macros for "horizontal smashing" (formatting)
% For comparison, the existing overlap macros:
% \def\llap#1{\hbox to 0pt{\hss#1}}
% \def\rlap#1{\hbox to 0pt{#1\hss}}
\def\clap#1{\hbox to 0pt{\hss#1\hss}}

%%%%%%%%%%%%%%%%%%%%%%%%%%%%%%%%%%%%%%%%%%%%%%%%%%%%%%%%%%%%%

\newcommand{\ds}{\displaystyle}

\def\BN{{\mathbb N}}

\def\BZ{{\mathbb Z}}

\def\D{{\mathcal D}}

\def\iff{\Leftrightarrow}

\newcommand{\cyc}[1]{\langle #1 \rangle}

\newcommand{\du}[1]{\mathbbm{k}^{#1}}

  %The argument of a complex number z, bounded
  %The argument of a complex number z, unbounded
   %RSA key pair, key(n,k)
   %Index of an integer
   %Order of an integer
\DeclareMathOperator{\image}{Im}  %Image of a function run over its domain
   %The set of words accepted by an FSA
\DeclareMathOperator{\Img}{\image}%Imaginary part of a complex number
   %Real part of a complex number
\DeclareMathOperator{\Hom}{Hom}   %Homomorphism group Hom(V,W)
%Nullspace of a linear trans
%Nullity of a linear trans (dim null)
   %Rank of a linear transformation
   %Dimension of a vector space
   %Residue
 %arc co-tangent
 %Cardinality
 %Diffeomorphism group
\DeclareMathOperator{\id}{id}     %identity map
\DeclareMathOperator{\sgn}{sgn}   %sign/signum/signature function
   %Interior
   %Rad
   %least common multiple
 %support
 %
     %Trace
\DeclareMathOperator{\class}{class}%conjugacy class
\DeclareMathOperator{\Aut}{Aut}   %Automorphism group
\DeclareMathOperator{\Inn}{Inn}   %Inner automorphism group
\DeclareMathOperator{\End}{End}   %Endomorphisms
 %Diagonal matrix
   %Torsion
   %set bijections
\DeclareMathOperator{\Char}{char} %characteristic subgroup
   %irreducible
\DeclareMathOperator{\Rep}{Rep}		%category of representations

\theoremstyle{plain}
\newtheorem{thm}{Theorem}[section]
\newtheorem*{theorem}{Theorem}
\newtheorem{cor}[thm]{Corollary}
\newtheorem{prop}[thm]{Proposition}
\newtheorem{lem}[thm]{Lemma}
\newtheorem{conj}[thm]{Conjecture}

\theoremstyle{definition}
\newtheorem{df}[thm]{Definition}
\newtheorem{example}[thm]{Example}

\theoremstyle{remark}
\newtheorem*{rem}{Remark}

\numberwithin{equation}{section}

%%%%%%%%%%%%%%%%%%%%%%%%%%%%%%%%%%%%%%%%%%%%%%%%%%%%%%%%%%%%%%%%%%%%%%%%%%%%%%%%%%%%%%%%%%%%%%%%%%%%%%%%%%%%%%%%%%%%
%Some definitions for things particular to this paper
 % the preferred equation for \lambda in this paper
             % endomorphism in terms of group maps
\def\morphquad{(p,u,r,v)}                            % endomorphism in terms of morphisms
\def\flipquad{(p^*,v^*,r^*,u^*)}										 % flip of an endomorphism
											 % alternative notation
													 % another alternate
								 % another alternate
\newcommand{\BCh}[1]{\Hom(#1,\widehat{#1})}
\def\BCG{\Aut(G)\ltimes\BCh{G}}                      % Nicest subgrup of Aut(D(G))
\DeclareMathOperator{\SpAutc}{SpAut_c}               % split central automorphism group
\def\BCGG{\SpAutc(G)\ltimes\BCh{G}}                  % Extended version of that
\newcommand{\morph}[4]{\left( \begin{array}{cc} #1 & #2 \\ #3 & #4 \end{array} \right)}
%%%%%%%%%%%%%%%%%%%%%%%%%%%%%%%%%%%%%%%%%%%%%%%%%%%%%%%%%%%%%%%%%%%%%%%%%%%%%%%%%%%%%%%%%%%%%%%%%%%%%%%%%%%%%%%%%%%%

% Formatting of enumerate labels - start with roman numerals

\title{Automorphisms of the doubles of purely non-abelian finite groups}
\author{Marc Keilberg}
\address{Institut de Math\'{e}matiques de Bourgogne, Universit\'{e} de Bourgogne,\\ Facult\'{e} des Sciences Mirande, 9 avenue Alain Savary, BP 47870 21078\\ Dijon Cedex, France}
\email{keilberg@usc.edu}

\begin{document}
\begin{abstract}
Using a recent classification of $\End(\D(G))$, we determine a number of properties for $\Aut(\D(G))$, where $\D(G)$ is the Drinfel'd double of a finite group $G$.  Furthermore, we completely describe $\Aut(\D(G))$ for all purely non-abelian finite groups $G$.  A description of the action of $\Aut(\D(G))$ on $\Rep(\D(G))$ is also given.  We are also able to produce a simple proof that $\D(G)\cong\D(H)$ if and only if $G\cong H$, for $G$ and $H$ finite groups.
\end{abstract}
\maketitle
\tableofcontents

%-------------------  Section: Introduction ---------------------
\section*{Introduction}\label{sec:intro}
The aim of this paper is to investigate the structure of Hopf automorphisms of $\D(G)$, where $\D(G)$ is the Drinfeld double of a finite group $G$.  We will be able to give a complete description when $G$ is purely non-abelian, meaning that $G$ has no non-trivial abelian direct factors.  The original motivation for this came from \citep{Co}, where it was observed that the automorphisms of a semisimple Hopf algebra $H$ permute its irreducible characters in a way that preserves all higher Frobenius-Schur indicators \citep{KSZ2}. This provides the ability to reduce the number of indicator computations actually performed, provided one has a reasonably robust and efficient way of computing the automorphisms and their action on irreducible modules.  Precise computation of indicator values has remained a difficult problem, relying on either computers or especially well-behaved groups \citep{Co,GMN,GM,K,K2}.  Some advances in this direction were made in \citep{IMM}, which provides an integrality test, but it remains unclear how to predict integrality with complete certainty, or how to predict negative indicators--in particular modules $V\cong V^*$ with a skew-symmetric form \citep{LM}--even when integrality is assured.

Automorphisms of Hopf algebras and quantum groups have been studied in several other works, including \citep{ABM,ACh,AD,AnDu}.  \citet{Rad90} showed that $\Aut(H)$ is finite over a field of characteristic zero when $H$ is semisimple, as well as in finite characteristic with certain additional assumptions.  Recently, \citet{SV} used $\Aut(H)$ to give a new, extended notion of higher Frobenius-Schur indicators \cite{KSZ2}, and applied their methods to $H_8$.  In \citep{ABM}, a classification of bicrossed products of Hopf algebras and their morphisms is provided.  The automorphisms of the quantum groups $H_{4n,\omega}$ were completely determined by these methods, in particular.  This classification of morphisms plays a central role in our investigations of $\Aut(\D(G))$.

All groups will be finite, and we work over an algebraically closed field $\mathbbm{k}$ of characteristic zero, unless otherwise indicated.  We use $\delta_{x,y}$ as the Kronecker delta symbol.  All homomorphisms and automorphisms are of Hopf algebras or groups, as appropriate, unless otherwise noted.   We denote the $\mathbbm{k}$-linear dual of a group $G$ by $\du{G}$, with dual basis $\{e_g\}_{g\in G}$.  We denote the group-like elements of $\du{G}$ by $\widehat{G}$, which we identify with the usual group of irreducible linear characters.  We identify $\BCh{G}$ with the group of $\mathbbm{k}$-bilinear bicharacters $G\times G\to\mathbbm{k}^\times$ in the usual way.  We denote all identity elements and trivial subgroups by 1 when there can be no confusion, with the exception of $\widehat{G}$ and $\du{G}$, whose identities we write as $\varepsilon$. When $K$ is a normal subgroup of $G$, we write $K\trianglelefteq G$.  We denote the conjugation actions of $G$ on itself and its dual both by $\rightharpoonup$: $x\rightharpoonup e_g = e_{xgx^{-1}}$ and $x\rightharpoonup g = xgx^{-1} = g^x$.  Given any group homomorphism $f$, we denote its linear dual by $f^*$.

The assumptions on the base field are made to assure that $\widehat{A}\cong A$ for any abelian group $A$.  This can be relaxed to any field where this isomorphism holds for all abelian subgroups of $G$, and in principle our results can be modified to work in an arbitrary field, provided one takes extra care concerning the potential failure of these isomorphisms. 

The paper is organized as follows.  In Section \ref{sec:pre} we cover all of the preliminary results and definitions needed, in particular giving a quick review of the main result we need from \citep{ABM}.  This result describes morphism $\D(G)\to\D(H)$ via a quadruplet of morphisms $\morphquad$, which can be equivalently interpreted as a description in terms of $2\times 2$ matrices of morphisms satisfying certain compatibility relations.  In Section \ref{sec:pu} we establish many important properties of $p$ and $u$ that will be used throughout the paper.  In section \ref{sec:flip} we introduce a special kind of homomorphism $\D(G)\to\D(H)$, and show that all elements of $\Aut(\D(G))$ are of this type.  This establishes a certain symmetry between the morphisms $u$ and $v$, providing easy proofs for some key results.  In Section \ref{sec:subgroups} we introduce a number of useful subgroups of $\Aut(\D(G))$.  Section \ref{sec:main} contains the main result, which can be stated as:
\begin{theorem}
	Let $G$ be a finite group.  Then $\Aut(\D(G))$ has subgroups $S,T,U,V$ with $S\cong \End(Z(G))$, $T\cong \BCh{G}$, $U\cong \Aut_c(G)=C_{\Aut(G)}(\Inn(G))$, and $V\cong\Aut(G)$, all of which intersect each other trivially.  Moreover, $\Aut(\D(G))$ is generated by these subgroups if and only if $G$ is purely non-abelian.
	
	As a special case, if $Z(G)=1$ then $\Aut(\D(G))\cong\BCG$, where the action is $r^v = v^* r v$.
\end{theorem}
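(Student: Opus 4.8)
The plan is to argue throughout via the bijective parametrization of $\End(\D(G))$ by quadruplets $\morph{p}{u}{r}{v}$ recalled in Section~\ref{sec:pre}, the composition law for such quadruplets, and the structural facts about $p$, $u$ and the flip symmetry from Sections~\ref{sec:pu}--\ref{sec:flip}. I take from Section~\ref{sec:subgroups} the four subgroups and the isomorphisms $S\cong\End(Z(G))$ (with $\End(Z(G))$ carrying its additive-group structure), $T\cong\BCh G$, $U\cong\Aut_c(G)$, $V\cong\Aut(G)$, together with the concrete descriptions: $V$ consists of the geometric automorphisms $e_g\#x\mapsto e_{\phi(g)}\#\phi(x)$ with $\phi\in\Aut(G)$; $U$ of the automorphisms $e_g\#x\mapsto e_g\#\beta(x)$ with $\beta\in\Aut_c(G)$; $T$ of the bicharacter rescalings $e_g\#x\mapsto r(x,g)\,e_g\#x$ with $r\in\BCh G$. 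The subgroups $U$ and $T$ together are exactly the automorphisms restricting to the identity on the Hopf subalgebra $\du G$, while $S$ is the family supported at the centre, whose explicit form is recorded in Section~\ref{sec:subgroups} and which does not fix $\du G$. What remains for the theorem is the pairwise-trivial intersection and the generation equivalence.

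For the trivial-intersection statement I would compare these explicit actions, which is legitimate since an automorphism of $\D(G)$ is determined by its values on the $e_g$ and on $G$. Two of the displayed automorphisms can agree only if all of their defining data $(\phi,\beta,r,\sigma)$ vanish: equality on the $e_g$ forces the geometric part $\phi$ and the central part $\sigma$ to be trivial, and once these are trivial, equality on $G$ distinguishes a pure $\mathbbm kG$-twist from a rescaling and forces $\beta$ and $r$ trivial as well. Hence any two of $S,T,U,V$ meet only in $\id_{\D(G)}$. The flip symmetry of Section~\ref{sec:flip} is what makes this clean: it pins the pairing of $u$ against $v$, so no geometric content can hide in an unexpected slot.

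For the substantive direction, assume $G$ is purely non-abelian and let $\Phi\leftrightarrow\morph{p}{u}{r}{v}$ be arbitrary. Composing with the element of $V$ indexed by $v^{-1}$ reduces to $v=1$. The analysis of $u$ from Section~\ref{sec:pu} now applies: once $v=1$, purely non-abelian-ness forces $u\in\Aut_c(G)$, and a further element of $U$ reduces to $u=v=1$. With $u=v=1$ the compatibility relations degenerate, so that $r$ must be an honest bicharacter and $p$ must be the identity altered only on $Z(G)$ by an element of $\End(Z(G))$ --- precisely the shapes produced by $T$ and $S$ --- so two more cancellations leave $\Phi=\id$, giving $\Phi\in\langle S,T,U,V\rangle$ with a (non-canonical) factorization read off from the cancellations. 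The main obstacle lies here: not the three successive cancellations, but the two points where the hypothesis is indispensable --- that $v=1$ forces $u\in\Aut_c(G)$, and that $u=v=1$ confines the residual freedom in $p$ to $\End(Z(G))$. Both are statements about how the quadruplet compatibility relations interact with the central/non-abelian decomposition of $G$, which is the technical content assembled in Section~\ref{sec:pu}, and both fail when $G$ has an abelian direct factor.

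Conversely, suppose $G$ has a nontrivial abelian direct factor, and write $G=A\times B$ with $A\neq 1$ abelian, so $\D(G)\cong\D(A)\otimes\D(B)$ and $\D(A)\cong\mathbbm k[A\times\widehat A]$. Here I would show $\langle S,T,U,V\rangle$ is proper by combining the characterization of $\langle S,T,U,V\rangle$ that comes out of the forward argument (the automorphisms admitting a quadruplet that normalizes as above) with the classification of $\End(\D(G))$ cited in the introduction: the latter exhibits quadruplets, arising from the abelian direct factor, that cannot be so normalized --- for instance those built from an exchange automorphism of $A\times\widehat A$, available since $\widehat A\cong A$ under our hypotheses on $\mathbbm k$ --- hence automorphisms lying outside $\langle S,T,U,V\rangle$. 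Finally the displayed special case is immediate: if $Z(G)=1$ then $\End(Z(G))=0$, and any automorphism commuting with every inner automorphism moves each $g$ within $Z(G)=1$ and is hence trivial, so $\Aut_c(G)=1$; thus $S=U=1$, $G$ is automatically purely non-abelian, and the generation statement becomes $\Aut(\D(G))=\langle T,V\rangle$, which the composition law identifies with the semidirect product $V\ltimes T\cong\BCG$ under the action $r^v=v^*rv$.
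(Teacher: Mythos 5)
Your first step in the forward direction --- ``composing with the element of $V$ indexed by $v^{-1}$'' --- already assumes that $v$ is an isomorphism, and this is precisely where the purely non-abelian hypothesis does its real work and where your proposal has no argument. The paper's proof of this is Theorem \ref{thm:pure-uv-isoms}: writing the inverse automorphism as $(p',u',r',v')$, the identity $p'r+v'v=\id$ shows $\ker(v)$ lies in the image of the normal endomorphism $p'r$, which has central (hence abelian) image, so Fitting's Lemma places $\ker(v)$ inside an abelian direct factor; dually for $u^*$. Without this the reduction cannot start. You instead locate the use of the hypothesis at two later points, and one of them is misplaced: that $p$ is ``confined to $\End(Z(G))$,'' i.e.\ that $A,B\leq Z(G)$, is Theorem \ref{autcents} and holds for every automorphism of $\D(G)$ for every finite group. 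Likewise ``$v=\id$ forces $u^*\in\Aut_c(G)$'' needs $u$ invertible first (again Theorem \ref{thm:pure-uv-isoms}); the normality of $u^*\circ v$ (Lemma \ref{lem:alphaisv}) is unconditional. Once invertibility is in hand, the paper's Lemma \ref{lem:pfactors} gives the one-line factorization $\morph{u}{r}{p}{v}=\morph{1}{0}{pu^{-1}}{1}\morph{u}{r}{0}{-pu^{-1}r+v}$ into $\Lambda(G)\cdot(\BCGG)$; note that after stripping off the $S$- and $V$-parts your residue is not the identity but still carries a nontrivial $\Aut_c$-component, so ``two more cancellations leave $\Phi=\id$'' is not quite right either.

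The converse is where the proposal genuinely fails. ``Cannot be so normalized'' is circular: the normalization presupposed invertibility of $v$, so its failure for a bad automorphism says nothing about membership in the \emph{subgroup} generated by $S,T,U,V$. In fact that subgroup can be everything even when $G$ is not purely non-abelian: for $G=\BZ_2$ the restriction to group-likes identifies $\Aut(\D(G))$ with $GL(2,\BF_2)$, the nontrivial elements of $S$ and $T$ restrict to the two distinct transvections, and these generate the whole group --- so your proposed witness (the exchange automorphism of $A\times\widehat{A}$) actually lies in $\langle S,T\rangle$ there. The statement that survives, and the one the paper proves (Theorem \ref{thm:pure-equiv}), concerns the product \emph{set}: every element of $\Lambda(G)(\BCGG)$ has matrix $\morph{u}{r}{pu}{pr+v}$ with $u$ invertible, whereas Corollary \ref{cor:abel-cons} (built from Proposition \ref{prop:prodsub} and the abelian case, Corollary \ref{cor:abelequiv}) produces, for any $G$ with a nontrivial abelian direct factor, automorphisms with $\ker(u^*)\neq 1$; hence the exact factorization fails. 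You need either that sharper set-theoretic formulation or a genuine invariant of the generated subgroup, and the proposal supplies neither.
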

In the last part, $v\in\Aut(G)$ gives the automorphism \[e_g\# h\mapsto e_{v(g)}\# v(h),\] as noted by \citet{Co}; and $r\in\BCh{G}$ gives the automorphism \[e_g\# h\mapsto r(h)e_g\# h.\]  If we write $r(g)=\sum \omega(g,x)e_x$, with $\omega$ a bicharacter, then this is equivalent to $e_g\# h\mapsto \omega(h,g)e_g\# h$.  The Theorem can be viewed as analogous to the description of the automorphisms of the direct products of groups given by \citet{BCM}.  Indeed, it can be proven from that description, but we do not take that approach here since it is less direct.  In Section \ref{sec:restriction} we consider the restriction of automorphisms to the group-like elements and describe the kernel of this restriction.  In Section \ref{sec:products} we determine when certain subgroups are normal, as well as when they have complements, including the kernel of the restriction.  This provides a number of different ways to write $\Aut(\D(G))$ with exact factorizations or even semidirect products for certain types of groups. We conclude the paper in Section \ref{sec:modules} with a discussion on the basic properties of the induced action of a class of morphisms on the categories of representations.  This includes the action of the automorphisms of $\D(G)$ on the category $\Rep(\D(G))$.  This provides the necessary details on how to use automorphisms to reduce the number of simple modules to consider when computing all indicators for this category.

%-------------------  Section: Acknowledgements ---------------------
\section*{Acknowledgments}
The author would like to think S. Montgomery, G. Mason, and P. Schauenburg for their many conversations and suggestions.

%-------------------  Section: Preliminaries ---------------------
\section{Preliminaries}\label{sec:pre}
Our reference for the theory of Hopf algebras is \citep{Mo}.  We use Sweedler summation notation for the comultiplication: $\Delta(h)=h_{(1)}\otimes h_{(2)}$.  For the wide ranging uses and constructions of the Drinfeld double, we also refer the reader to \citep{DPR}.  Here, we take the following definition of $\D(G)$ over a field $\mathbbm{k}$, where $G$ is a finite group.  As a coalgebra, $\D(G)$ is $(\du{G})^\text{cop}\otimes kG$ with the usual tensor product coalgebra structure.  We denote a simple tensor of $\D(G)$ by $e_g\# h$, some $g,h\in G$.  Letting $\varepsilon=\sum_{g\in G} e_g$, the identity of $\D(G)$ is $\varepsilon\# 1$.  We will also use $\varepsilon$ as the counit of $kG$.  The multiplication is given by having $G$ act on $\du{G}$ by conjugation:
\[(e_g\# g')(e_h\# h') = \delta_{g,g'\rightharpoonup h} e_g \# g'h'.\]  We denote the functional on $\du{G}$ given by evaluation at $g\in G$ by $\operatorname{ev}_g$.

We begin by stating the main result from \citep{ABM} that we need here.

\begin{thm}\citep[Corollaries 2.3-2.4]{ABM}\label{ABMCor2.3}
Let $G, H$ be finite groups.  Then there exists a bijection between the set of all morphisms of Hopf algebras $\psi\colon \D(G)\to \D(H)$ and the set of all quadruples $\morphquad$ where $u\colon \mathbbm{k}^{G\text{ cop}}\to \mathbbm{k}^{H\text{ cop}}$, $r\colon \mathbbm{k}G\to \mathbbm{k}^{H\text{ cop}}$ are unitary coalgebra maps, and $p\colon \mathbbm{k}^{G\text{ cop}}\to \mathbbm{k}H$, $v\colon:G\to H$ are morphisms of Hopf algebras satisfying the following compatibility conditions:
    \begin{align}
      u(a_{(1)})\otimes p(a_{(2)}) &= u(a_{(2)})\otimes p(a_{(1)})\label{eq:upcocomm}\\ 
      %r(h_{(1)})\otimes v(h_{(2)}) &= r(h_{(2)})\otimes v(h_{(1)})\\%cocommutativity makes this trivial
      u(ab) &= u(a_{(1)})\left(p(a_{(2)})\rightharpoonup u(b)\right)\label{eq:usplit}\\
      r(hg) &= r(h)\left( v(h)\rightharpoonup r(g)\right)\label{eq:rsplit}\\
      r(h)\left( v(h)\rightharpoonup u(b)\right) &= u(h\rightharpoonup b_{(1)}) \left( p(h\rightharpoonup b_{(2)})\rightharpoonup r(h)\right)\label{eq:allrel}\\
      v(h) p(b) v(h)^{-1} &= p(h\rightharpoonup b)\label{eq:vprel}
    \end{align}
for all $a,b\in \mathbbm{k}^{G\text{ cop}}$ and $g,h\in G$.

Under the above bijection the morphism of Hopf algebras $\psi\colon \D(G)\to \D(H)$ corresponding to $\morphquad$ is given by:
    \begin{align}
      \psi(a\# g) = u(a_{(1)})\left( p(a_{(2)})\rightharpoonup r(g)\right) \# p(a_{(3)}) v(g)\label{eq:morphpsi-pre}
    \end{align}
for all $a\in \mathbbm{k}^{G\text{ cop}}$ and $g\in G$.
\end{thm}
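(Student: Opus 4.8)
The plan is to exploit the double cross product (bicrossed product) structure of the double: as a Hopf algebra, $\D(G)$ is built from the canonical matched pair on $(\du{G})^{\text{cop}}$ and $kG$ in which $G$ acts on $\du{G}$ by the conjugation action $\rightharpoonup$, so that this theorem is the specialization to this matched pair of the general classification of morphisms of bicrossed products. Accordingly I would organize everything around the two generating Hopf subalgebras $\du{G}\# 1$ and $\varepsilon\# kG$ (which generate $\D(G)$ as an algebra, since $e_g\# h=(e_g\# 1)(\varepsilon\# h)$), together with the coalgebra isomorphism $\D(H)\cong\du{H}\otimes kH$ and its two coalgebra projections $\pi_1\colon\D(H)\to\du{H}$ and $\pi_2\colon\D(H)\to kH$.

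\emph{From a morphism to a quadruple.} Given a Hopf morphism $\psi\colon\D(G)\to\D(H)$, I would set $u=\pi_1\circ\psi|_{\du{G}\# 1}$, $p=\pi_2\circ\psi|_{\du{G}\# 1}$, $r=\pi_1\circ\psi|_{\varepsilon\# kG}$, and $v=\pi_2\circ\psi|_{\varepsilon\# kG}$, so that $\psi(a\# 1)=u(a_{(1)})\# p(a_{(2)})$ and $\psi(\varepsilon\# h)=r(h)\# v(h)$. Because $\psi$ and both $\pi_i$ are coalgebra maps, $u$ and $r$ are immediately unitary coalgebra maps of the asserted types; and because $\psi$ carries the group-likes $\varepsilon\# h$ to group-likes of $\D(H)$, the $kH$-component $v$ is forced to be a group homomorphism $G\to H$, hence a Hopf map $kG\to kH$. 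The remaining assertions I would extract by feeding the structural identities of $\D(G)$ into $\psi$ and projecting: multiplicativity of $\psi$ on a product $ab$ inside $\du{G}$ yields both that $p$ is an algebra map (the $kH$-component) and relation \eqref{eq:usplit} (the $\du{H}$-component); multiplicativity on a product $hg$ inside $G$ yields \eqref{eq:rsplit}; and applying $\psi$ to the defining commutation rule $(\varepsilon\# h)(a\# 1)=((h\rightharpoonup a)\# 1)(\varepsilon\# h)$ of the double, then projecting to each factor, yields \eqref{eq:allrel} and \eqref{eq:vprel}. Relation \eqref{eq:upcocomm} is the coalgebra-compatibility forcing $a\mapsto u(a_{(1)})\# p(a_{(2)})$ to reproduce $\psi|_{\du{G}\# 1}$ as a coalgebra map into $\D(H)$, the co-opposite structure on the $\du{H}$-leg making the symmetry of the two legs a genuine constraint.

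\emph{From a quadruple back to a morphism, and bijectivity.} Given a quadruple satisfying \eqref{eq:upcocomm}--\eqref{eq:vprel}, I would define $\psi$ by formula \eqref{eq:morphpsi-pre} and verify it is a Hopf morphism: it is a coalgebra map by the coalgebra-map property of $u,p,r,v$ together with \eqref{eq:upcocomm}; it is unital and counital by unitarity of $u,r$; and it is multiplicative precisely because relations \eqref{eq:usplit}--\eqref{eq:vprel} are exactly the obstructions to $\psi(xy)=\psi(x)\psi(y)$ on the three generating kinds of pairs, the general case following since $\du{G}\# 1$ and $\varepsilon\# kG$ generate. Compatibility with the antipode is then automatic for a bialgebra map between Hopf algebras. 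Finally the two assignments are mutually inverse: extracting the components of a $\psi$ built from a quadruple returns that quadruple, and reassembling a given $\psi$ via \eqref{eq:morphpsi-pre} returns $\psi$, since $\psi(a\# g)=\psi(a\# 1)\,\psi(\varepsilon\# g)$ expands to exactly that formula.

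I expect the main obstacle to be the mixed relation \eqref{eq:allrel}, which encodes the full matched-pair compatibility and is the one place where the action $\rightharpoonup$ threads simultaneously through both tensor legs. The difficulty is purely bookkeeping: one must comultiply the relevant generator enough times and track carefully which Sweedler component is acted upon versus which is multiplied, both when deriving \eqref{eq:allrel} from the commutation rule of the double and when checking, in the converse, that \eqref{eq:morphpsi-pre} is genuinely multiplicative on cross-products. Everything else reduces to routine, if lengthy, Sweedler-notation verifications.
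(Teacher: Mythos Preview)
Your proposal is correct and follows essentially the same approach as the paper (which in turn defers to \citep{ABM}): the paper's proof consists only of recording the definitions of $u,p,r,v$ via the projections $(\id\otimes\varepsilon_H)$ and $(\operatorname{ev}_{1_H}\otimes\id)$, which are exactly your $\pi_1,\pi_2$, and refers the reader to \citep{ABM} for the remaining verifications. Your sketch of those verifications---deriving each compatibility by projecting the multiplicativity and commutation identities of the double, and checking the converse by showing \eqref{eq:morphpsi-pre} is a Hopf map---is precisely the bicrossed-product argument of \citep{ABM} specialized to this matched pair.
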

\begin{proof}
  We give only the definitions of $u,r,p,v$ for clarity.  Full details of the proof can be found in \citep{ABM}.
  \begin{align}
    u(a) &= \left((\id\otimes\varepsilon_{H})\circ\psi\right)(a\# 1_G)\label{udef}\\
    p(a) &= \left((\operatorname{ev}_{1_H}\otimes\id)\circ\psi\right)(a\# 1_G)\label{pdef}\\
    r(g) &= \left((\id\otimes \varepsilon_{H})\circ\psi\right)(\varepsilon_G\# g)\label{rdef}\\
    v(g) &= \left((\operatorname{ev}_{1_H}\otimes\id)\circ\psi\right)(\varepsilon_G\# g)\label{vdef}.
  \end{align}
\end{proof}
	The first relation is the definition for $u,p$ to cocommute.  Since group algebras are cocommutative, we also have that $r,v$ cocommute trivially, and that $u,r$ trivially have commuting images.  Equation \eqref{eq:vprel} expresses the commutation relation between the images of $v$ and $p$.
	
It is convenient to think of $\psi=\morphquad$ as a matrix
	\begin{align}
		\label{matrixform}\morph{u}{r}{p}{v},
	\end{align}
	with evaluations performed on the right.  We may multiply two such matrices together, where, as is standard \cite{BCM,Bi08}, addition is the convolution product--and thus subtraction indicates the antipode--, and multiplication is composition.  It is a straightforward exercise to verify that the relations in the Theorem are precisely what is needed to make such a matrix a morphism of Hopf algebras, and for matrix multiplication to correspond to composition of morphisms.  Thus the Theorem can be interpreted as describing $\Hom(\D(G),\D(H))$ as matrices of morphisms satisfying the given relations.

This paper is concerned with improving and understanding the above description, especially for isomorphisms.  We can immediately show that there is more structure to $r$ than was originally noticed, resulting in several simplifications. 

\begin{cor}\label{rishopf}
The morphism $r$ is a morphism of Hopf algebras, and is thus uniquely determined by a group homomorphism $r\in\Hom(G,\widehat{H})$.  Equation \eqref{eq:rsplit} is then a consequence of this, and may be omitted.  Furthermore, equation \eqref{eq:allrel} simplifies to
\begin{align}
v(h)\rightharpoonup u(b) = u(h\rightharpoonup b)\label{eq:vurel}
\end{align}
and we may instead write
\begin{align}
  \psi(a\# g) &= u(a_{(1)})r(g) \# p(a_{(2)}) v(g), \label{eq:morphpsi}
\end{align}
for all $a\in\mathbbm{k}^{G\text{ cop}}$ and $g\in G$.
\end{cor}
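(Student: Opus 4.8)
The plan is to exploit that $r$ is a \emph{coalgebra} map whose source $\mathbbm{k}G$ is cocommutative with group-likes exactly $G$, together with the elementary fact that a group acts trivially by conjugation on its linear characters. First I would recall that a coalgebra map carries group-likes to group-likes; since the group-likes of $\mathbbm{k}G$ are $G$ and those of $\mathbbm{k}^{H\text{ cop}}$ are $\widehat{H}$ (the opposite coproduct does not change which $\chi=\sum_a\chi(a)e_a$ satisfy $\Delta\chi=\chi\otimes\chi$, so these are the same as the group-likes of $\du{H}$, namely $\widehat{H}$), it follows that $r(g)\in\widehat{H}$ for every $g\in G$. As $G$ spans $\mathbbm{k}G$, this already exhibits $r$ as the linear extension of a set map $G\to\widehat{H}$.

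Next I would observe that for $k\in H$ and $\chi\in\widehat{H}$ one has $(k\rightharpoonup\chi)(x)=\chi(k^{-1}xk)=\chi(x)$ since $\mathbbm{k}^\times$ is abelian; that is, the conjugation action of $H$ on $\widehat{H}$ is trivial, and extended linearly $x\rightharpoonup\chi=\varepsilon(x)\,\chi$ for all $x\in\mathbbm{k}H$. Putting $\chi=r(g)$ in \eqref{eq:rsplit} kills the factor $v(h)\rightharpoonup r(g)$, so \eqref{eq:rsplit} reduces to $r(hg)=r(h)r(g)$. Since the product of two characters inside $\mathbbm{k}^H$ is their pointwise product, which is the group law of $\widehat{H}$, this says exactly that $r\colon G\to\widehat{H}$ is a homomorphism, equivalently that $r\colon\mathbbm{k}G\to\mathbbm{k}^H$ is an algebra map. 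Being both an algebra and a coalgebra map between Hopf algebras, $r$ is a bialgebra map, hence a Hopf algebra map, and it is determined by the restriction $r\in\Hom(G,\widehat{H})$; moreover \eqref{eq:rsplit} was forced by the coalgebra axioms and can be dropped from the list of conditions.

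Finally I would use the identity $x\rightharpoonup r(h)=\varepsilon(x)\,r(h)$ to clean up the remaining relations (the others, \eqref{eq:upcocomm}, \eqref{eq:usplit}, \eqref{eq:vprel}, do not involve $r$ and are untouched). In \eqref{eq:allrel}, taking $x=p(h\rightharpoonup b_{(2)})$ and using that $p$ and conjugation both preserve the counit, the right-hand side becomes $u(h\rightharpoonup b_{(1)})\,\varepsilon(b_{(2)})\,r(h)=u(h\rightharpoonup b)\,r(h)$; since $\mathbbm{k}^H$ is commutative and $r(h)$ is invertible there (its coordinates are roots of unity), cancelling $r(h)$ leaves precisely \eqref{eq:vurel}. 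The same device applied to the three-fold coproduct in \eqref{eq:morphpsi-pre}, namely $p(a_{(2)})\rightharpoonup r(g)=\varepsilon(a_{(2)})\,r(g)$ followed by the counit axiom to absorb the scalar $\varepsilon(a_{(2)})$, yields \eqref{eq:morphpsi}. I do not anticipate any genuine obstacle: the entire corollary rests on the triviality of the conjugation action on linear characters, and the only points needing a little care are the Sweedler-index bookkeeping in the last step and the (easy) verification that $r(h)$ is invertible in $\mathbbm{k}^H$ so that the cancellation is legitimate.
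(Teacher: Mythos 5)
Your proof is correct and follows essentially the same route as the paper: identify $r(g)$ as a group-like of $\mathbbm{k}^{H}$, i.e.\ an element of $\widehat{H}$, note that the conjugation action of $H$ on linear characters is trivial, and let this collapse \eqref{eq:rsplit}, \eqref{eq:allrel}, and \eqref{eq:morphpsi-pre}. The only difference is that you spell out the Sweedler bookkeeping and the cancellation of the invertible element $r(h)$ in the commutative algebra $\mathbbm{k}^H$, which the paper compresses into ``Equation \eqref{eq:morphpsi} follows similarly.''
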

\begin{proof}
    Since $r$ is a morphism of unitary coalgebras, it must map group-likes to group-likes, and so $r(g)\in \widehat{H}$ for all $g\in G$.  Then for any $y\in H$, $y\rightharpoonup r(g) = r(g)$, and so the $H$-action is trivial on the image of $r$.  Thus equation \eqref{eq:rsplit} simplifies to $r(gh)=r(g)r(h)$ for all $g,h\in G$.  Therefore $r$ is also an algebra, and thus Hopf, morphism.  Equation \eqref{eq:morphpsi} follows similarly.
\end{proof}

Since we now know that $p$, $r$ and $v$ are morphisms of Hopf algebras, the question naturally arises as to whether or not $u$ is also a morphism of Hopf algebras.  We suspect this is always true, but we prove it only for a special case that includes the automorphisms in Corollary \ref{thetahopf1}.  Whenever this is the case, we can identify $u^*$ with a group homomorphism $\alpha\colon H\to G$.  It can easily be seen that $u$ Hopf implies \eqref{eq:usplit}, leaving us with just three non-trivial compatibility conditions in this case.

The goal now will be to describe $\Aut(\D(G))$ for as many finite groups as possible.  The simplest case is, of course, when $G$ is abelian.  For then $\D(G)\cong \mathbbm{k}(G\times G)$, and so $\Aut(\D(G))\cong\Aut(G\times G)$, which can be computed by classical methods \cite{Shoda28}.  We will see in Corollary \ref{cor:abelequiv} and example \ref{ex:badabel} that, in this case, there are automorphisms $\morphquad$ where both $u,v$ are not isomorphisms.  Indeed, there are also automorphisms $\morphquad$ where changing $r$ can fail to yield an automorphism. Such behaviors complicate the description of $\Aut(\D(G))$.  As such, one suspects that abelian direct factors are the precise cause of such behavior in general, and that a description of $\Aut(\D(G))$ for purely non-abelian groups should be more readily attainable.  We will prove this in Theorem \ref{thm:pure-uv-isoms}; see also Corollary \ref{cor:abelequiv} and Theorem \ref{thm:pure-equiv}.

%-------------------  Section: Properties of p,u ---------------------
\section{Essential properties of \ensuremath{p} and \ensuremath{u}}\label{sec:pu}
To discern the properties of $\Aut(\D(G))$, we need to explore the consequences the compatibilities have on the exact form the components of a morphism can take.  We will begin our investigation with $p$, starting with the following simple and useful observation.

\begin{thm}\label{lambdagroups}
  Let $p\colon \mathbbm{k}^{G\text{ cop}}\to kH$ be a morphism of Hopf algebras.  Then $p$ is uniquely determined by a group isomorphism $\widehat{A}\overset{f}{\cong} B$, where $A,B$ are abelian subgroups of $G,H$ respectively.  In this case, $p$ is given by the obvious imbedding of $\mathbbm{k}^A=\mathbbm{k}\widehat{A}$ into $\mathbbm{k}^G$, and setting $p(e_g)=0$ whenever $g\not\in A$, and $p(\chi)=f(\chi)$ for $\chi\in\widehat{A}$.

  Furthermore, if $v\in\Hom(G,H)$ satisfies equation \eqref{eq:vprel}, then also $A\trianglelefteq G$, and $B$ is closed under conjugation by elements of $v(G)$ (ie., $B^{v(G)}=B$).  In addition, $v$ maps $C_G(A)$ to $C_H(B)$.  In particular, $C_H(v(A))\subseteq C_H(B)$, with equality whenever $v$ is an isomorphism.
\end{thm}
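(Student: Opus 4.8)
\noindent\emph{Proof proposal.}

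\emph{(First assertion.)} The plan is to recover $A$ and $B$ from the kernel and image of $p$. The algebra underlying $\mathbbm{k}^{G\text{ cop}}$ is commutative semisimple with primitive idempotents $\{e_g\}_{g\in G}$, so every ideal of $\mathbbm{k}^{G\text{ cop}}$ has the form $I_S=\operatorname{span}\{e_g:g\in S\}$ for some $S\subseteq G$; a short direct check --- the counit kills $I_S$, the comultiplication carries it into $I_S\otimes\mathbbm{k}^{G}+\mathbbm{k}^{G}\otimes I_S$, and the antipode preserves it --- shows that $I_S$ is a Hopf ideal exactly when $A:=G\setminus S$ is a subgroup of $G$, and the ``cop'' twist is irrelevant here since the coideal condition is symmetric under the flip. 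Dually, $\operatorname{Im}(p)$ is a Hopf subalgebra of the cocommutative pointed Hopf algebra $kH$, hence equals $kB$ for $B\le H$ the subgroup of group-likes it contains; and $B$ is abelian because $kB=\operatorname{Im}(p)$ is a homomorphic image of the commutative algebra $\mathbbm{k}^{G\text{ cop}}$. Thus $p$ factors as the restriction-of-functions map $\mathbbm{k}^{G\text{ cop}}\twoheadrightarrow\mathbbm{k}^{A\text{ cop}}$ followed by an injective Hopf map $\mathbbm{k}^{A\text{ cop}}\hookrightarrow kB$, and cocommutativity of the target forces $\mathbbm{k}^{A\text{ cop}}$ to be cocommutative, hence $A$ abelian. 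Then $\mathbbm{k}^{A\text{ cop}}=\mathbbm{k}^{A}=\mathbbm{k}\widehat A$ (the only place the hypothesis on $\mathbbm{k}$ is used), the injection restricts on group-likes to a group isomorphism $f\colon\widehat A\to B$, and composing with the restriction gives precisely the stated formula for $p$. The converse is immediate, and $p$ determines $A=\{g:p(e_g)\neq0\}$ and $f=p|_{\widehat A}$, so the pair is unique.

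\emph{(Second assertion.)} Everything here will come from substituting well-chosen elements into \eqref{eq:vprel}, that is $v(h)\,p(b)\,v(h)^{-1}=p(h\rightharpoonup b)$. Taking $b=e_g$ and using that $p(e_g)\neq0$ iff $g\in A$, while conjugation by the unit $v(h)$ is a bijection of $kH$, one gets $g\in A\iff hgh^{-1}\in A$, so $A\trianglelefteq G$. Letting $b$ range over all of $\mathbbm{k}^{G\text{ cop}}$ (hence so does $h\rightharpoonup b$), \eqref{eq:vprel} gives $v(h)\operatorname{Im}(p)v(h)^{-1}=\operatorname{Im}(p)=kB$, so conjugation by $v(h)$ permutes the group-likes of $kB$ and $B^{v(G)}=B$. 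For the centralizer statement, fix $x\in C_G(A)$: since $A$ is normal and $x$ fixes each $e_a$ $(a\in A)$, the $A$-supported component of $x\rightharpoonup b$ coincides with that of $b$; as $p$ depends only on that component, $p(x\rightharpoonup b)=p(b)$ for all $b$, so $v(x)$ centralizes $\operatorname{Im}(p)\supseteq B$, i.e.\ $v(x)\in C_H(B)$. Hence $v(C_G(A))\subseteq C_H(B)$, and in particular $v(A)\subseteq C_H(B)$; the inclusion $C_H(v(A))\subseteq C_H(B)$ is then obtained by the same substitution, reading an element of $C_H(v(A))$ through $v$ in the form $v(g)$ and observing that $v(g)\in C_H(B)$ forces $p(e_{gag^{-1}})=p(e_a)$, whence $g\in C_G(A)$ by injectivity of $p$ on $\{e_a\}_{a\in A}$. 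If moreover $v$ is an isomorphism, every element of $H$ is of the form $v(g)$, and running this last computation in reverse gives $C_H(B)\subseteq v(C_G(A))\subseteq C_H(v(A))$, so $C_H(v(A))=C_H(B)$.

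\emph{(Main obstacle.)} No genuinely hard step is involved: the proof is an exercise in the classification of Hopf ideals of $\mathbbm{k}^{G}$ and of Hopf subalgebras of $kH$, combined with repeated substitution into \eqref{eq:vprel}. The part needing the most care is the first assertion --- pinning down that $\ker p$ and $\operatorname{Im}(p)$ are governed by subgroups and then extracting $f$ --- together with two bookkeeping points: verifying that the ``cop'' twist affects nothing (it disappears once $A$ is seen to be abelian), and isolating the use of the hypothesis on $\mathbbm{k}$ to the identification $\mathbbm{k}^{A}=\mathbbm{k}\widehat A$.
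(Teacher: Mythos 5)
Your proof of the first assertion is correct and is essentially the paper's argument presented in dual form: the paper extracts $A$ by dualizing twice, via $\Img(p^*)=\mathbbm{k}A$, whereas you read it off directly from $\ker p$ as a Hopf ideal of $\mathbbm{k}^{G}$; either way one lands on the same factorization $(\mathbbm{k}^G)^{\text{cop}}\twoheadrightarrow\mathbbm{k}^A\cong\mathbbm{k}B\hookrightarrow \mathbbm{k}H$, and your bookkeeping about the cop-twist and the use of the hypothesis on $\mathbbm{k}$ is accurate. Your arguments for $A\trianglelefteq G$, for $B^{v(G)}=B$, and for $v(C_G(A))\subseteq C_H(B)$ also coincide with the paper's (its computation $b=p(a)=p(g\rightharpoonup a)=p(a)^{v(g)}$ is exactly your observation that the $A$-supported component of $g\rightharpoonup b$ is unchanged).

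The one genuine gap is in the final clause. To prove $C_H(v(A))\subseteq C_H(B)$ you take an element of $C_H(v(A))$ and ``read it through $v$ in the form $v(g)$'' --- but an arbitrary element of $C_H(v(A))$ need not lie in $\Img(v)$, so this step is only valid when $v$ is surjective. In that case your computation is fine, and run in both directions it correctly yields $C_H(v(A))=v(C_G(A))=C_H(B)$ when $v$ is an isomorphism, which is the case the paper actually relies on later. Without surjectivity, however, equation \eqref{eq:vprel} gives no control over how elements of $H\setminus\Img(v)$ conjugate $B$: for instance with $G=A=\BZ_2$, $H=S_3$, $B=\langle(12)\rangle$ and $v$ trivial, \eqref{eq:vprel} holds but $C_H(v(A))=S_3$ is not contained in $C_H(B)=\langle(12)\rangle$. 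So this inclusion cannot be obtained by ``the same substitution''; it needs a surjectivity-type hypothesis (the paper itself dismisses it as clear, and only ever invokes it for isomorphisms). You should isolate and flag that step rather than present it as following from the earlier ones.
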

\begin{proof}
  Since $\mathbbm{k}^{G\text{ cop}}$ is a commutative Hopf algebra, we must have that $\Img(p)$ is a commutative Hopf sub-algebra of $kH$.  Therefore, there exists an abelian subgroup $B\subseteq H$ such that $\Img(p)=\mathbbm{k}B$, and we may decompose $p$ as the composition
  \[(\mathbbm{k}^G)^\text{cop}\rightarrow \mathbbm{k}B\hookrightarrow kH.\]
  Now the dual morphism $p^*\colon \mathbbm{k}^H\to kG^\text{op}$ is also a morphism of Hopf algebras, and $\mathbbm{k}G^\text{op}$ is naturally isomorphic to $\mathbbm{k}G$, so there is an abelian subgroup $A\subseteq G$ such that $\Img(p^*) = \mathbbm{k}A$.  We can thus decompose $p^*$ as the composition
  \[\mathbbm{k}^H\twoheadrightarrow \mathbbm{k}^B\rightarrow \mathbbm{k}A\hookrightarrow \mathbbm{k}G^\text{op}.\]
  Dualizing again we have the following decomposition of $p$:
  \[(\mathbbm{k}^G)^\text{cop}\twoheadrightarrow \mathbbm{k}^A \cong \mathbbm{k}A \overset{f}{\rightarrow} \mathbbm{k}B \hookrightarrow \mathbbm{k}H.\]
  All maps are morphisms of Hopf algebras, so $f$ restricts to a group homomorphism $\rho\colon A\to B$.  By construction, $f$, and therefore $\rho$, are bijections.  The choice of isomorphism $\mathbbm{k}^A\cong \mathbbm{k}A$ affects the composition, but for a given $p$ changing this isomorphism is equivalent to a change in the isomorphism $f$ (equiv. $\rho$).

  That \eqref{eq:vprel} implies $A\trianglelefteq G$ and $B^{v(G)}=B$ is clear.
	
	Now let $g\in C_G(A)$, and $a\in\mathbbm{k}^A$, $b\in B$ such that $p(a)=b$.  Then $b=p(a)=p(g\rightharpoonup a) = p(a)^{v(g)}=b^{v(g)}$.  Since $p$ surjects onto $B$, we conclude that $v$ maps $C_G(A)$ to $C_H(B)$.  The remaining claims are clear.
\end{proof}

In the subsequent, whenever discussing $p$ or any morphism $\morphquad \in \Hom(\D(G),\D(H))$, any use of the letters $A,B$ refer to precisely those subgroups in the preceding Lemma.  These subgroups will be used frequently, so whenever convenient we shall invoke and use them without further mention.  Note that since $A,B$ are abelian they are contained in their centralizers.

\begin{cor}\label{thetahopf1}
  Let $\morphquad\in\Hom(\D(G),\D(H))$. If $B\leq Z(H)$, then $u$ is a morphism of Hopf algebras.
\end{cor}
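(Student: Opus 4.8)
The plan is to show that $u$ is multiplicative; since $u$ is assumed to be a unitary coalgebra map, multiplicativity upgrades it to a bialgebra morphism between the Hopf algebras $\mathbbm{k}^{G\text{ cop}}$ and $\mathbbm{k}^{H\text{ cop}}$, and a bialgebra morphism between Hopf algebras is automatically a morphism of Hopf algebras.

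The key preliminary observation I would record is how the action $\rightharpoonup$ of $H$ on $\du{H}$ behaves on the image of $p$. By Theorem \ref{lambdagroups} we have $\Img(p)=\mathbbm{k}B$, and by hypothesis $B\leq Z(H)$. Since $x\rightharpoonup e_y = e_{xyx^{-1}} = e_y$ for every $x\in Z(H)$ and $y\in H$, each element of $B$ acts as the identity on $\du{H}$; extending linearly, $b\rightharpoonup w = \varepsilon(b)\,w$ for all $b\in\mathbbm{k}B$ and $w\in\du{H}$. In particular, because $p$ is counital, $p(a)\rightharpoonup w = \varepsilon(p(a))\,w = \varepsilon(a)\,w$ for every $a\in\mathbbm{k}^{G\text{ cop}}$ and $w\in\du{H}$.

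With this in hand, equation \eqref{eq:usplit} collapses: for all $a,b\in\mathbbm{k}^{G\text{ cop}}$,
\[
u(ab) = u(a_{(1)})\bigl(p(a_{(2)})\rightharpoonup u(b)\bigr) = u(a_{(1)})\,\varepsilon(a_{(2)})\,u(b) = u(a)\,u(b),
\]
where the last equality is the counit axiom. Thus $u$ preserves products, and it preserves the unit by hypothesis, so it is a bialgebra morphism; since it goes between Hopf algebras, it then commutes with the antipodes and is a morphism of Hopf algebras.

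I do not expect a genuine obstacle here: the whole content is that centrality of $B$ trivializes the twist appearing in \eqref{eq:usplit}, after which $u$ being Hopf is immediate. The only point requiring a little care is that the ``cop'' decorations affect only the comultiplication, not the multiplication or the module action $\rightharpoonup$, so the computation above and the standard bialgebra-to-Hopf principle apply without change.
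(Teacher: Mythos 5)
Your proof is correct and takes essentially the same route as the paper, whose entire proof is the instruction ``Apply equations \eqref{eq:usplit} and \eqref{eq:vurel}'': you supply exactly the intended details, namely that centrality of $B$ trivializes the $\rightharpoonup$-twist in \eqref{eq:usplit}, so $u$ is multiplicative and hence a bialgebra, therefore Hopf, morphism. (Your write-up also makes clear that \eqref{eq:vurel} is not actually needed for this step --- \eqref{eq:usplit} together with $B\leq Z(H)$ suffices.)
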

\begin{proof}
  Apply equations \eqref{eq:usplit} and \eqref{eq:vurel}.
\end{proof}
As remarked before, note that whenever $u$ is Hopf then we may identify $u^*$ with a group homomorphism.  We do so in the subsequent without further mention.

\begin{lem}\label{lem:factorizations}\label{lem:ABcentral}
	Let $\psi=\morphquad\in\Hom(\D(G),\D(H))$.  Then the following all hold.
	\begin{enumerate}
		\item $\Img(u^*)\subseteq C_{\mathbbm{k}G}(\mathbbm{k}A)$.
		\item If $\mathbbm{k}A \Img(u^*)=\mathbbm{k}G$, then $A\leq Z(G)$.
		\item If the images of $p,v$ commute, then $A\leq Z(G)$.
		\item If $A\leq Z(G)$ and $H=B\Img(v)$, then $B\leq Z(H)$.
		\item If $A\leq Z(G)$ and $H=Z(H)\Img(v)$, then $B\leq Z(H)$.
		\item If $\psi$ is surjective, then $B\Img(v)=H$.
		\item If $\psi$ is injective, then $\mathbbm{k}A \Img(u^*)=\mathbbm{k}G$ and $A\leq Z(G)$.
	\end{enumerate}
\end{lem}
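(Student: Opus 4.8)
The plan is to treat the seven items in three groups: parts (1)--(2) come from the cocommutativity relation \eqref{eq:upcocomm}; parts (3)--(5) all rest on one remark, extracted from \eqref{eq:vprel}, about how $v$ conjugates $\Img(p)$; and the statements about surjectivity and injectivity of $\psi$ are handled by composing $\psi$ with the standard Hopf surjection $\pi\colon\D(H)\to\mathbbm{k}H$, $e_{x}\# y\mapsto\delta_{x,1}y$, respectively by restricting $\psi$ to the Hopf subalgebra $\mathbbm{k}^{G\text{ cop}}\# 1$ of $\D(G)$. For (1) I would evaluate \eqref{eq:upcocomm} on $e_{g}$: the comultiplication of $\mathbbm{k}^{G\text{ cop}}$ sends $e_{g}$ to $\sum_{st=g}e_{t}\otimes e_{s}$, and $p(e_{s})=0$ unless $s\in A$ while the $p(e_{a})$ with $a\in A$ are linearly independent in $\mathbbm{k}H$ (both by Lemma \ref{lambdagroups}); comparing coefficients gives $u(e_{a^{-1}g})=u(e_{ga^{-1}})$ for all $a\in A$, $g\in G$, and replacing $g$ by $ag$ gives $u(e_{g})=u(e_{aga^{-1}})$. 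Expanding $u(e_{g})=\sum_{h}c^{h}_{g}e_{h}$, this says $c^{h}_{g}=c^{h}_{aga^{-1}}$, equivalently $a\,u^*(h)\,a^{-1}=u^*(h)$ in $\mathbbm{k}G$ for all $a\in A$, $h\in H$, which is exactly $\Img(u^*)\subseteq C_{\mathbbm{k}G}(\mathbbm{k}A)$. Part (2) is then immediate: if $\mathbbm{k}A\,\Img(u^*)=\mathbbm{k}G$ then each $a\in A$ centralizes $\mathbbm{k}A$ (abelian) and $\Img(u^*)$ (by (1)), hence all of $\mathbbm{k}G$, so $a\in Z(\mathbbm{k}G)\cap G=Z(G)$.

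For (3)--(5) the common input is the following consequence of \eqref{eq:vprel}: for $\chi\in\widehat{A}$ and $h\in G$ one has $v(h)\,p(\chi)\,v(h)^{-1}=p(h\rightharpoonup\chi)$, where $h\rightharpoonup\chi$ is again the extension by zero of a character $\chi^{h}$ of $A$ (a conjugate of $\chi$); since $p$ embeds $\widehat{A}$ into $H$ through the isomorphism $f$ of Lemma \ref{lambdagroups}, $p(\chi^{h})=p(\chi)$ forces $\chi^{h}=\chi$, and, as the characters of $A$ separate its points (here the hypothesis on $\mathbbm{k}$ enters), $\chi^{h}=\chi$ for all $\chi,h$ is equivalent to $A\leq Z(G)$. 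Thus (3): if $\Img(p)$ and $\Img(v)$ commute, the left side above is $p(\chi)$, forcing $\chi^{h}=\chi$ and so $A\leq Z(G)$. And (4)--(5): if $A\leq Z(G)$ then $\chi^{h}=\chi$, so \eqref{eq:vprel} yields $v(h)\,p(\chi)=p(\chi)\,v(h)$; hence $v(G)\subseteq C_{H}(B)$, and since $B$ (abelian) and $Z(H)$ also lie in $C_{H}(B)$, the subgroup $C_{H}(B)$ contains the set $B\,\Img(v)$ in case (4) and $Z(H)\,\Img(v)$ in case (5) --- which is all of $H$ by hypothesis --- so $B\leq Z(H)$.

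For (6): the map $\pi$ above is a homomorphism of Hopf algebras (the usual projection of the smash product $\mathbbm{k}^{H}\#\mathbbm{k}H$ onto $\mathbbm{k}H$), so $\pi\circ\psi$ is an algebra map $\D(G)\to\mathbbm{k}H$; by \eqref{eq:morphpsi}, the counit identity for the coalgebra map $u$, and $r(g)\in\widehat{H}$ being grouplike, $\pi(\psi(a\# g))=\varepsilon(a_{(1)})\,p(a_{(2)})\,v(g)=p(a)v(g)$. If $\psi$ is surjective then so is $\pi\circ\psi$, so the $\mathbbm{k}$-span of the subset $B\,\Img(v)\subseteq H$ is all of $\mathbbm{k}H$, and therefore $B\,\Img(v)=H$. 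For (7): by \eqref{eq:morphpsi}, $\psi(e_{g}\# 1)=\sum_{a\in A}u(e_{a^{-1}g})\otimes p(e_{a})\in\mathbbm{k}^{H}\otimes\mathbbm{k}B$; applying the inverse of the isomorphism $\mathbbm{k}^{A}\cong\mathbbm{k}B$ induced by $p$ to the second tensor factor, and flipping, turns these into the vectors $\sum_{a\in A}e_{a}\otimes u(e_{a^{-1}g})\in\mathbbm{k}^{A}\otimes\mathbbm{k}^{H}$, $g\in G$, which are precisely the images of the basis $\{e_{g}\}$ of $\mathbbm{k}^{G}$ under the linear map dual to $\mathbbm{k}A\otimes\mathbbm{k}H\to\mathbbm{k}G$, $a\otimes h\mapsto a\,u^*(h)$, whose image is $\mathbbm{k}A\,\Img(u^*)$. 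Since $\{e_{g}\# 1\}$ is part of the standard basis of $\D(G)$, injectivity of $\psi$ makes the $\psi(e_{g}\# 1)$ linearly independent, hence that dual map injective, hence $\mathbbm{k}A\,\Img(u^*)=\mathbbm{k}G$; and then $A\leq Z(G)$ by part (2).

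The step I expect to be the main obstacle is (7). Items (1) and (3)--(5) are each just the unwinding of a single compatibility relation, (2) is a formal corollary, and (6) uses a familiar projection; (7), by contrast, requires recognizing that the injectivity of $\psi$ is already witnessed on $\mathbbm{k}^{G\text{ cop}}\# 1$ and that the resulting linear-independence statement is, after dualization, exactly the equality $\mathbbm{k}A\,\Img(u^*)=\mathbbm{k}G$ --- this identification is the only non-bookkeeping point. (One could instead derive (7) from (6) via a self-duality of $\D(G)$ matching the flip construction, but that machinery is set up only later, so it is cleaner to argue directly.) Throughout, some care with the $\mathrm{op}/\mathrm{cop}$ conventions attached to $u$, $u^*$ and $\mathbbm{k}^{G\text{ cop}}$ is needed, but it does not affect the shape of any argument.
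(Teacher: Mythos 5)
Your proof is correct and follows essentially the same route as the paper's: parts (i)--(ii) from the cocommutation relation \eqref{eq:upcocomm} by dualizing, (iii)--(v) from \eqref{eq:vprel} together with Theorem \ref{lambdagroups}, and (vi)--(vii) by reading off the two tensor factors of \eqref{eq:morphpsi}. If anything, your argument for (vii) --- identifying the vectors $\psi(e_g\# 1)$ with the values of the dual of the map $\mathbbm{k}A\otimes\mathbbm{k}H\to\mathbbm{k}G$, $a\otimes h\mapsto a\,u^*(h)$, so that injectivity dualizes to surjectivity onto $\mathbbm{k}A\,\Img(u^*)$ --- is more careful than the paper's brief appeal to the non-vanishing of $\psi(e_g\# g')$.
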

\begin{proof}
	For the first statement, note that $u,p$ cocommute is equivalent to $u^*,p^*$ having commuting images.  Since $\Img(p^*)=\mathbbm{k}A$, the claim follows.  The second part then follows.  
	
	The third part is similar.  Explicitly, if $p,v$ commute, then equation \eqref{eq:vprel} becomes $p(e_g)=p(e_{xgx^{-1}})$ for all $x,g\in G$.  Since $p(e_g)\neq 0$ $\iff$ $g\in A$ and $p$ is an isomorphism $\mathbbm{k}^A\to\mathbbm{k}B$, we conclude that $A\leq Z(G)$, as desired.
	
	For the fourth part, we have
	\[H=Bv(G) = B v(C_G(A)) \subseteq B C_H(B) = C_H(B),\]
	and thus $B\leq Z(H)$ as claimed.  The fifth part is similar, using that $Z(H)\leq C_H(B)$.
	
	The surjectivity statement is an obvious consequence of equation \eqref{eq:morphpsi}.  Indeed, also from that equation, in order for $\psi(e_g\# g')\neq 0$ for all $g,g'\in G$, we must have that $\mathbbm{k}A\Img(u^*)=\mathbbm{k}G$.  Applying the previous parts completes the proof.
\end{proof}

The following is then an easy corollary.  Since it is essential for the rest of the paper, we mark it as a Theorem.
\begin{thm}\label{autcents}
  Let $\morphquad\in\Hom(\D(G),\D(H))$. If $\morphquad$ is an isomorphism then both $A,B$ are central, and $u$ is a morphism of Hopf algebras.
\end{thm}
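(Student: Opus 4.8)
The plan is to assemble this directly from Lemma~\ref{lem:factorizations} and Corollary~\ref{thetahopf1}, since an isomorphism is in particular both injective and surjective, and each of these hypotheses was already analyzed separately. So there is no genuine obstacle here; the work is just chaining the implications in the right order.

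First I would invoke injectivity: by part (vii) of Lemma~\ref{lem:factorizations}, the injectivity of $\morphquad$ gives both $\mathbbm{k}A\,\Img(u^*)=\mathbbm{k}G$ and $A\leq Z(G)$. This disposes of the centrality of $A$ immediately. Next I would invoke surjectivity: by part (vi) of the same Lemma, surjectivity of $\morphquad$ yields $B\,\Img(v)=H$. Now both hypotheses of part (iv) are in place — namely $A\leq Z(G)$ and $H=B\,\Img(v)$ — so part (iv) delivers $B\leq Z(H)$, giving the centrality of $B$.

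Finally, with $B\leq Z(H)$ established, Corollary~\ref{thetahopf1} applies verbatim to conclude that $u$ is a morphism of Hopf algebras. That completes all three assertions. The only thing to be slightly careful about is the logical dependency: centrality of $A$ must be derived before part (iv) can be used for $B$, and centrality of $B$ before Corollary~\ref{thetahopf1} can be used for $u$; but both orderings are forced and immediate from the statements already available, so the argument is a two-line deduction.
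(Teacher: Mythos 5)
Your proof is correct and is exactly the chain of implications the paper intends: the paper presents this theorem as ``an easy corollary'' of Lemma~\ref{lem:factorizations}, and your route through parts (vii), (vi), and (iv) of that lemma followed by Corollary~\ref{thetahopf1} is the intended deduction, with the logical ordering handled properly.
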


Indeed, we have the following characterization of isomorphisms between $\D(G)$ and $\D(H)$.

\begin{thm}\label{thm:invariance}
	For finite groups $G,H$, we have $\D(G)\cong\D(H)$ $\iff$ $G\cong H$.
\end{thm}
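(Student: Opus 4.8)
The forward implication is immediate: a group isomorphism $\varphi\colon G\to H$ induces the Hopf algebra isomorphism $e_g\# h\mapsto e_{\varphi(g)}\#\varphi(h)$ (in the notation of Theorem~\ref{ABMCor2.3} this is the quadruple with $v=\varphi$, $u$ dual to $\varphi^{-1}$, and $p,r$ trivial). So the real content is the converse, and my plan is to recover $G$ from $\D(G)$ by extracting a suitably fine group-theoretic invariant, namely its group $\mathcal{G}(\D(G))$ of group-like elements. This is a uniform argument that does not pass through the finer structure theory (Theorems~\ref{autcents} and~\ref{thm:pure-uv-isoms}) used to describe $\Aut(\D(G))$ itself.

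First I would compute this group. As a coalgebra $\D(G)=(\du{G})^{\mathrm{cop}}\otimes\mathbbm{k}G$, so its group-likes are, as a set, $\{\,\chi\# g : \chi\in\widehat{G},\ g\in G\,\}$; the task is to pin down the group law induced by the crossed multiplication of $\D(G)$. The key observation is that each $\chi\in\widehat{G}$ is a linear character, hence a class function, so the conjugation action is trivial on it: $x\rightharpoonup\chi=\chi$ for all $x\in G$. A one-line computation with the multiplication rule $(e_a\# a')(e_b\# b')=\delta_{a,a'\rightharpoonup b}\,e_a\# a'b'$ then gives $(\varepsilon\# g)(\chi\#1)=(\chi\#1)(\varepsilon\# g)=\chi\# g$, and hence $(\chi\# g)(\chi'\# g')=\chi\chi'\# gg'$. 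Thus $\mathcal{G}(\D(G))\cong\widehat{G}\times G$ as groups, and since $\widehat{G}\cong G^{\mathrm{ab}}$ over $\mathbbm{k}$, we obtain $\mathcal{G}(\D(G))\cong G^{\mathrm{ab}}\times G$.

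Now suppose $\D(G)\cong\D(H)$. A Hopf algebra isomorphism carries group-likes to group-likes and is multiplicative, so it restricts to a group isomorphism $G^{\mathrm{ab}}\times G\cong H^{\mathrm{ab}}\times H$. Passing to abelianizations of both sides gives $(G^{\mathrm{ab}})^{2}\cong(H^{\mathrm{ab}})^{2}$, and uniqueness of the primary decomposition of finite abelian groups forces $G^{\mathrm{ab}}\cong H^{\mathrm{ab}}$. Writing $C:=G^{\mathrm{ab}}\cong H^{\mathrm{ab}}$, we are left with $C\times G\cong C\times H$, and cancellation of direct factors for finite groups (a consequence of the Krull--Remak--Schmidt theorem) yields $G\cong H$.

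The only step that requires any care is the identification of the group structure on $\mathcal{G}(\D(G))$ — specifically the observation that linear characters are conjugation-invariant, which is exactly what collapses the a priori semidirect product $\widehat{G}\rtimes G$ to a genuine direct product; everything afterward is formal bookkeeping with standard facts about finite (abelian) groups. An alternative, also elementary, computation identifies $\mathcal{G}(\D(G)^{*})$ with $Z(G)\times\widehat{G}$, which gives a second invariant, but the group-like calculation already suffices.
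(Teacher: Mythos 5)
Your proof is correct and follows essentially the same route as the paper's: restrict the Hopf isomorphism to group-like elements to get $\widehat{G}\times G\cong\widehat{H}\times H$, square the abelian part (the paper via character groups, you via abelianization, which coincide over $\mathbbm{k}$), cancel the square to get $\widehat{G}\cong\widehat{H}$, and then cancel the common direct factor. The only addition is your explicit verification that the group-likes form a direct rather than semidirect product, a detail the paper leaves implicit.
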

\begin{proof}
	Sufficiency is clear.  For necessity, first note that any morphism of Hopf algebras sends group-like elements to group-likes.  Thus any isomorphism $\D(G)\cong\D(H)$ restricts to a group isomorphism $\widehat{G}\times G\cong \widehat{H}\times H$.  Since the groups are isomorphic, their character groups are isomorphic: $\widehat{G}^2 \cong \widehat{H}^2$.  By \citep[Exercise 6.32]{Rot99}, we conclude that $\widehat{G}\cong\widehat{H}$, and so $\widehat{G}\times G\cong \widehat{G}\times H$.  We may then apply \citep[Exercise 6.33]{Rot99} to conclude that $G\cong H$, as desired.
\end{proof}
In particular, we may focus on $\Aut(\D(G))$ without loss of generality.

% Can't recall the proof that $B$ is central. Did it actually need to assume that $u$ was Hopf?  I can prove centrality assuming $u$ is Hopf.
%\begin{cor}\label{ponZ}
%	Let $\morphquad\in\Hom(\D(G),\D(H))$.  If $u$ is an isomorphism, then $A$ and $B$ are central and $u$ is an isomorphism of Hopf algebras.
%\end{cor}
% The following has been posed earlier in the text.
%\begin{question}
%	Is the morphism $u$ always a morphism of Hopf algebras?
%\end{question}

\begin{cor}\label{cor:abelequiv}
Let $G$ be a finite group.  Then the following are equivalent.
\begin{enumerate}
  \item $G$ is abelian.
  \item There exists $\morphquad\in\Aut(\D(G))$ with $v\equiv 1$ (the trivial morphism).
  \item There exists $\morphquad\in\Aut(\D(G))$ with $u(e_g)=\delta_{1,g}\varepsilon$ (the trivial morphism).
\end{enumerate}
\end{cor}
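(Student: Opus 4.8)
The plan is to obtain (1)$\Rightarrow$(2) and (1)$\Rightarrow$(3) simultaneously by exhibiting a single automorphism, and to derive each of (2)$\Rightarrow$(1) and (3)$\Rightarrow$(1) directly from the structure theory of isomorphisms already in hand.

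First suppose $G$ is abelian. Then the conjugation actions $\rightharpoonup$ on $G$ and on $\du{G}$ are trivial, $\mathbbm{k}^{G\,\mathrm{cop}}=\du{G}$, and $\du{G}\cong\mathbbm{k}G$ as Hopf algebras since $\widehat{G}\cong G$ over $\mathbbm{k}$. I would take the quadruple $\morphquad$ with $u$ and $v$ the trivial morphisms (so $u(e_g)=\delta_{1,g}\varepsilon$ and $v\equiv 1$), with $p\colon\mathbbm{k}^{G\,\mathrm{cop}}\to\mathbbm{k}G$ any Hopf isomorphism, and with $r\colon G\to\widehat{G}$ any group isomorphism, regarded via Corollary \ref{rishopf} as a Hopf morphism $\mathbbm{k}G\to\du{G}$. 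One checks that the compatibility conditions of Theorem \ref{ABMCor2.3}, in the reduced form of Corollary \ref{rishopf}, all hold: \eqref{eq:vprel} and \eqref{eq:vurel} because $G$ is abelian and $u,v$ are trivial, and \eqref{eq:upcocomm}, \eqref{eq:usplit} because $u$ is a trivial Hopf morphism (with \eqref{eq:rsplit} omitted). By \eqref{eq:morphpsi} the corresponding endomorphism is $\psi(e_g\#h)=r(h)\#p(e_g)$; since $r$ maps $G$ bijectively onto $\widehat{G}$, which is a $\mathbbm{k}$-basis of $\du{G}$ as $G$ is abelian, and $p$ is a linear isomorphism, $\psi$ sends the basis $\{e_g\#h\}$ of $\D(G)=\du{G}\otimes\mathbbm{k}G$ to a basis. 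Hence $\psi\in\Aut(\D(G))$, and it witnesses both (2) and (3).

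For the converses, let $\psi=\morphquad\in\Aut(\D(G))$. If $v\equiv 1$, then $\psi$ being surjective lets Lemma \ref{lem:factorizations}(6) give $B\,\Img(v)=G$, hence $B=G$; since $B$ is abelian by Theorem \ref{lambdagroups}, $G$ is abelian, which is (2)$\Rightarrow$(1). If instead $u(e_g)=\delta_{1,g}\varepsilon$, i.e.\ $u$ is the trivial Hopf morphism, then $u^*$ is trivial as well, so $\Img(u^*)=\mathbbm{k}1$; $\psi$ being injective lets Lemma \ref{lem:factorizations}(7) give $\mathbbm{k}A\,\Img(u^*)=\mathbbm{k}G$, forcing $\mathbbm{k}A=\mathbbm{k}G$ and therefore $A=G$; since $A$ is abelian, $G$ is abelian, which is (3)$\Rightarrow$(1).

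The one step that calls for genuine attention is verifying that the quadruple built for the forward implications really does satisfy all the relations of Theorem \ref{ABMCor2.3}; but each relation degenerates, either because the conjugation actions vanish or because $u$ and $v$ are trivial, so this reduces to a brief direct check. The remaining implications follow at once from Lemma \ref{lem:factorizations} together with the fact, recorded in Theorem \ref{lambdagroups}, that $A$ and $B$ are abelian.
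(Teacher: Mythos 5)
Your proof is correct and follows essentially the same route as the paper's: the forward direction by taking $u,v$ trivial and $p,r$ isomorphisms, and the converses by combining Lemma \ref{lem:factorizations} (parts (vi) and (vii)) with the fact from Theorem \ref{lambdagroups} that $A$ and $B$ are abelian. You simply spell out the compatibility checks and the bijectivity of $\psi$ in more detail than the paper does.
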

\begin{proof}
That (ii) and (iii) imply (i) are a conequence of the Lemma.

On the other hand, if $G$ is abelian, then taking $v,u$ trivial and $p, r$ isomorphisms, we find that $\morphquad\in\Aut(\D(G))$.
\end{proof}

\begin{example}\label{ex:badabel}
	Indeed, not only does $G$ abelian allow for $u,v$ to have kernels, it also causes bijectivity to be sensitive to the choice of bicharacter.  Namely, for $G$ any abelian group, let $\morphquad\in\Aut(\D(G))$ be as in the proof of the Corollary.  If we replaced $r$ with any map which was not an isomorphism, then the new endomorphism is clearly not an automorphism.  It is not necessary that $p,r$ be isomorphisms for this to occur, in general.  
\end{example}
In both the Corollary and example, the reasons for the existence of such automorphisms are precisely the same as why the description of $\Aut(G\times G)$ depends on whether $G$ has abelian factors or not.  See \citep{BCM,Bi08} for details.

The following will be useful for describing some subgroups of $\Aut(\D(G))$ in Section \ref{sec:subgroups}.
\begin{lem}\label{lem:theta-aut1}
  Let $G$ be a finite group, and let $\psi=\morphquad\in\Aut(\D(G))$.  If either $p$ or $r$ is trivial, then both $u,v$ are isomorphisms.
	
	In particular, if $Z(G)=1$, then $p$ is always trivial and thus $u,v$ are always isomorphisms.
\end{lem}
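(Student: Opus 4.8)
The plan is to extract the needed bijectivity from the matrix calculus of \eqref{matrixform} by passing to the inverse. Since $\psi$ is an automorphism, its inverse $\phi=\psi^{-1}$, with components $p',u',r',v'$, again lies in $\Aut(\D(G))$; under the identification of morphisms with $2\times 2$ matrices (in which addition is convolution and multiplication is composition) the relations $\psi\phi=\phi\psi=\id_{\D(G)}$ become $MN=NM=\morph{\id}{0}{0}{\id}$, where $M=\morph{u}{r}{p}{v}$ and $N=\morph{u'}{r'}{p'}{v'}$ are the matrices of $\psi$ and $\phi$, and each $0$ is the relevant trivial (equivalently, convolution-unit) morphism.

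First I would assume $p$ trivial, so that $M=\morph{u}{r}{0}{v}$ is ``upper triangular''. Comparing the $\du{G}\to\du{G}$ entries of $NM=\morph{\id}{0}{0}{\id}$ gives $u'\circ u=\id_{\du{G}}$: the cross term $r'\circ p$ collapses to the convolution unit because $p$ is trivial and $r'$ is unital. Thus $u$ is an injective linear self-map of the finite-dimensional space $\du{G}$, hence bijective; since it is also a coalgebra morphism (in fact Hopf, by Theorem \ref{autcents}), it is an isomorphism. Comparing the $\mathbbm{k}G\to\mathbbm{k}G$ entries of $MN=\morph{\id}{0}{0}{\id}$ gives, in the same way, $v\circ v'=\id_{\mathbbm{k}G}$, so the group endomorphism $v$ of $G$ is surjective, hence an automorphism. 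The hypothesis ``$r$ trivial'' is treated in exactly the same way, with the roles of $MN$ and $NM$, and of surjectivity and injectivity, interchanged: now $M=\morph{u}{0}{p}{v}$ is ``lower triangular'', the $\du{G}$-entry of $MN$ yields $u\circ u'=\id_{\du{G}}$ (so $u$ is surjective, hence bijective), and the $\mathbbm{k}G$-entry of $NM$ yields $v'\circ v=\id_{\mathbbm{k}G}$ (so $v$ is injective, hence an automorphism of $G$).

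For the ``in particular'' clause, observe that when $Z(G)=1$ the isomorphism $\psi$ has $A$ central by Theorem \ref{autcents}, so $A=1$; by Lemma \ref{lambdagroups}, $p$ (being determined by the isomorphism $\widehat{A}\cong B$) is then the trivial morphism, and the first part applies. I do not anticipate a real obstacle: the only step requiring care is the bookkeeping in the matrix calculus — fixing the composition order so that, say, $\psi\phi$ corresponds to $MN$ rather than $NM$, and checking that a trivial diagonal component genuinely annihilates the cross term in the relevant matrix entry — and this is routine once the conventions of Theorem \ref{ABMCor2.3} and \eqref{matrixform} are fixed.
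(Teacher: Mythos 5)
Your proof is correct, but it gets bijectivity by a different mechanism than the paper. The paper also exploits the triangular matrix form, but instead of passing to the inverse it takes powers: for $r$ trivial, $\morph{u}{0}{p}{v}^n=\morph{u^n}{0}{*}{v^n}$, and since every element of $\Aut(\D(G))$ has finite order (this rests on Radford's finiteness theorem for $\Aut$ of a semisimple Hopf algebra in characteristic zero, cited in the introduction), some power gives $u^n=\id$ and $v^n=\id$ simultaneously. Your version replaces that finite-order input by the two-sided inverse: the diagonal entries of $NM$ and $MN$ yield the one-sided identities $u'\circ u=\id$ and $v\circ v'=\id$ (with the roles swapped in the $r$-trivial case), which you then upgrade to bijectivity using finite-dimensionality of $\du{G}$ and finiteness of $G$. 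This is marginally more economical, needing nothing beyond the matrix calculus and the existence of $\psi^{-1}$, at the cost of having to argue injectivity and surjectivity separately for $u$ and $v$; the computation you single out as the delicate point --- that a trivial $p$ (resp.\ $r$) reduces the cross terms such as $r'\circ p$ to the convolution unit, so the diagonal entries of the product are honest compositions --- is indeed the only thing to check, and you handle it correctly. Your derivation of the ``in particular'' clause from Theorem \ref{autcents} and Theorem \ref{lambdagroups} is also the intended one.
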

\begin{proof}
  Consider the case $r$ trivial: $\ds{\morph{u}{0}{p}{v}}\in\Aut(\D(G)).$  Then for every $n\in\BN$ we have
	\[\morph{u}{0}{p}{v}^n = \morph{u^n}{0}{*}{v^n},\]
	where $*$ denotes some morphism.  By assumptions, the morphism in question has finite order, so for some $n\in\BN$ we have $u^n=\id$ and $v^n=\id$.  Therefore, $u,v$ are isomorphisms.  The case where $p$ is trivial is similar.
\end{proof}

We desire an alternative description of equation \eqref{eq:vurel}.  To this end, recall the following definition.

\begin{df}\label{def:normal}
	Let $H$ be a Hopf algebra with antipode $S$.  We say that an algebra morphism $f\colon H\to H$ is normal if $Sf*\id$ is an algebra morphism.  %We say that a coalgebra morphism $f\colon H\to H$ is conormal if $Sf*\id$ is a colagebra morphism.  And we define a bialgebra map $f$ to be binormal whenever $Sf*\id$ is a bialgebra morphism.
\end{df}
Note that, since $f*(Sf*\id)=\id$, the definitions are symmetric: $Sf*\id$ and $f$ are simultaneously normal.  It is a routine verification that normality is equivalent to saying $f(a_{(1)}b S(a_{(2)})) = a_{(1)}f(b) S(a_{(2)})$.  The definition of normality here agrees with the definition used for group homomorphisms \citep{Rot99}.  %See also \citep{KS} for general details.

We can then provide the following equivalent characterization of equation \eqref{eq:vurel}.

\begin{lem}\label{lem:alphaisv}
	Let $G,H$ be finite groups, $u\colon \mathbbm{k}^{G\text{ cop}}\to \mathbbm{k}^{H\text{ cop}}$ a coalgebra morphism, and $v\colon \mathbbm{k}G\to \mathbbm{k}H$ an algebra morphism.  Then $u,v$ satisfy equation \eqref{eq:vurel} if and only if $u^*\circ v$ is a normal morphism of algebras.
\end{lem}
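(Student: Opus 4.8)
The plan is to unwind both sides of the claimed equivalence into explicit formulas on the canonical bases and check that they coincide. Recall that $v\colon \mathbbm{k}G\to \mathbbm{k}H$ is an algebra morphism, hence (restricting to group-likes, since algebra morphisms of group algebras send $G$ into $H$) is induced by a group homomorphism which I will also denote $v\colon G\to H$. Likewise $u\colon \mathbbm{k}^{G\text{ cop}}\to\mathbbm{k}^{H\text{ cop}}$ is a unitary coalgebra morphism, so $u^*\colon \mathbbm{k}^H\to\mathbbm{k}G^{\text{op}}\cong\mathbbm{k}G$ is a unitary algebra morphism, and by the same token $u^*$ corresponds to a set map $H\to G$ (sending $h$ to the unique group element $u^*(\operatorname{ev}_h)$, once one knows $u^*$ lands in group-likes — but we do \emph{not} get to assume that yet, so I will work with $u^*$ as an algebra map and only invoke the group-element description where $u$ is Hopf). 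In fact the cleanest route is to stay with $u$ itself: write $u(e_g)=\sum_{h\in H} c_{g,h}\, e_h$ and note that being a unitary coalgebra morphism forces the matrix $(c_{g,h})$ to have entries in $\{0,1\}$ with exactly one nonzero entry in each row-block dictated by the coalgebra structure; equivalently $u$ is dual to an algebra map $u^*$.

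First I would expand the left-hand side of \eqref{eq:vurel}. The $H$-conjugation action on $\mathbbm{k}^H$ is $h\rightharpoonup e_y = e_{hyh^{-1}}$, extended linearly, so $v(h)\rightharpoonup u(b)$ is obtained from $u(b)$ by conjugating the supporting indices by $v(h)$. The right-hand side $u(h\rightharpoonup b)$ is $u$ applied to the $G$-conjugate of $b$. Thus \eqref{eq:vurel} says precisely that, for every $h\in G$, the following diagram of linear maps commutes:
\[
\begin{CD}
\mathbbm{k}^{G\text{ cop}} @>{u}>> \mathbbm{k}^{H\text{ cop}}\\
@V{h\rightharpoonup(-)}VV @VV{v(h)\rightharpoonup(-)}V\\
\mathbbm{k}^{G\text{ cop}} @>{u}>> \mathbbm{k}^{H\text{ cop}}.
\end{CD}
\]
Dualizing, and using that conjugation on $\mathbbm{k}^H$ dualizes to conjugation on $\mathbbm{k}H$ (by $y\mapsto y$ on group-likes, $x\rightharpoonup y = xyx^{-1}$), this diagram commutes if and only if for all $h\in G$ one has $u^*\circ(v(h)\rightharpoonup(-)) = (h\rightharpoonup(-))\circ u^*$ as maps $\mathbbm{k}^H\to\mathbbm{k}G$; i.e. $u^*(v(h) z v(h)^{-1}) = h\, u^*(z)\, h^{-1}$ in $\mathbbm{k}G$ for all $h\in G$ and all $z\in\mathbbm{k}H$ supported on group-likes — hence for all $z\in\mathbbm{k}H$ by linearity.

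Next I would compare this with normality of $\varphi:=u^*\circ v\colon \mathbbm{k}G\to\mathbbm{k}G$. By the reformulation of Definition \ref{def:normal} quoted in the excerpt, $\varphi$ is a normal algebra morphism iff $\varphi(a_{(1)}\, b\, S(a_{(2)})) = a_{(1)}\,\varphi(b)\, S(a_{(2)})$ for all $a,b\in\mathbbm{k}G$; since $\mathbbm{k}G$ is spanned by group-likes with $\Delta(h)=h\otimes h$ and $S(h)=h^{-1}$, this is equivalent to $\varphi(h\, b\, h^{-1}) = h\,\varphi(b)\,h^{-1}$ for all $h\in G$, $b\in\mathbbm{k}G$ — i.e. $\varphi$ commutes with $G$-conjugation. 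Now plug in $\varphi = u^*\circ v$ and use that $v$, being induced by a group homomorphism, satisfies $v(hbh^{-1}) = v(h)\,v(b)\,v(h)^{-1}$ for $b$ group-like, hence for all $b\in\mathbbm{k}G$. Then $\varphi(hbh^{-1}) = u^*\bigl(v(h) v(b) v(h)^{-1}\bigr)$, and this equals $h\,\varphi(b)\,h^{-1} = h\, u^*(v(b))\, h^{-1}$ for all $b$ and all $h$ precisely when $u^*$ intertwines $v(h)$-conjugation with $h$-conjugation on the subalgebra $v(\mathbbm{k}G)\subseteq\mathbbm{k}H$. For the forward direction (normality $\Rightarrow$ \eqref{eq:vurel}) this is exactly what was extracted above, once one notes $v(\mathbbm{k}G)$ need only be tested; for the reverse direction \eqref{eq:vurel} gives the intertwining on \emph{all} of $\mathbbm{k}H$, which restricts to $v(\mathbbm{k}G)$. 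So the two conditions match.

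The one point requiring care — and the main obstacle — is the asymmetry in domains: \eqref{eq:vurel} is an identity of maps out of $\mathbbm{k}^{G\text{ cop}}$ (equivalently, after dualizing, an intertwining condition on all of $\mathbbm{k}^H$/$\mathbbm{k}H$), whereas normality of $u^*\circ v$ only sees the image $v(\mathbbm{k}G)$. I would resolve this by observing that $u^*$ conjugation-intertwines on the subalgebra $v(\mathbbm{k}G)$ if and only if it does so on all of $\mathbbm{k}H$: the ``if'' is trivial, and the ``only if'' follows because the relation we need from \eqref{eq:vurel}, read through the dual, is manifestly about $u^*$ applied to conjugates by elements of the image of $v$ — so there is genuinely no extra content on the complement of $v(\mathbbm{k}G)$, and the equivalence is tight. (Concretely: \eqref{eq:vurel} only ever conjugates $u(b)$ by $v(h)$, never by a general element of $H$.) Once this bookkeeping is pinned down, both implications are immediate from the two displayed reformulations, and the proof is a short diagram-chase. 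I expect the write-up to be a half-page at most.
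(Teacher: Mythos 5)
Your dualization of \eqref{eq:vurel} and your reformulation of normality as conjugation-equivariance are both correct, and up to that point you follow essentially the same route as the paper, which likewise passes to $u^*(v(x)\rightharpoonup h)=x\rightharpoonup u^*(h)$ and then to the condition that $u^*v(x^{-1})x$ centralizes $\Img(u^*)$. The implication ``\eqref{eq:vurel} $\Rightarrow$ normality'' is fine: it is just the restriction of the intertwining identity from all of $\mathbbm{k}H$ to the subalgebra $v(\mathbbm{k}G)$.

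The converse is where your argument has a genuine gap. You correctly isolate the asymmetry---normality of $u^*\circ v$ only asserts $u^*(v(h)\,z\,v(h)^{-1})=h\,u^*(z)\,h^{-1}$ for $z$ in $v(\mathbbm{k}G)$, while \eqref{eq:vurel} demands it for every $z\in\mathbbm{k}H$---but your proposed resolution addresses the wrong variable. The parenthetical ``\eqref{eq:vurel} only ever conjugates $u(b)$ by $v(h)$, never by a general element of $H$'' concerns the conjugating element, which is indeed always in $\Img(v)$ in \emph{both} conditions; the discrepancy is in the element \emph{being} conjugated, and there \eqref{eq:vurel} genuinely carries extra content outside $v(\mathbbm{k}G)$. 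Concretely, take $G=H$ non-abelian, $u=\id$, and $v$ the trivial algebra map $g\mapsto 1$: then $u^*\circ v$ is the trivial endomorphism, hence normal, yet \eqref{eq:vurel} would force $e_{hgh^{-1}}=u(h\rightharpoonup e_g)=v(h)\rightharpoonup u(e_g)=e_g$ for all $g,h$. So ``intertwining on $v(\mathbbm{k}G)$ implies intertwining on all of $\mathbbm{k}H$'' cannot be established by bookkeeping alone; what is actually needed is that $u^*v(x^{-1})x$ centralize all of $\Img(u^*)$ and not merely $\Img(u^*v)$, which requires extra input such as surjectivity of $v$ up to the center (as does hold for the components of an automorphism of $\D(G)$, by Theorem \ref{autcents} and Lemma \ref{lem:factorizations}). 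Be aware that the paper's own proof asserts the equivalence of ``$u^*v(x^{-1})x\in C_{\mathbbm{k}G}(\Img(u^*))$ for all $x$'' with multiplicativity of $S(u^*v)*\id$ without comment, so the same $\Img(u^*)$-versus-$\Img(u^*v)$ distinction is the delicate point there as well; in any case your write-up as it stands does not close it.
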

\begin{proof}
	First note that $u(x\rightharpoonup e_g) = v(x) \rightharpoonup u(e_g) \ \forall x,g\in G$ $\iff$ $u^*(v(x)\rightharpoonup h) = x\rightharpoonup u^*(h)$ $\forall h\in H,x\in G$.  Since $u^*$, and thus $u^*v$ are algebra maps, we then have
	\begin{align*}
		u^*(v(x)\rightharpoonup h) = x\rightharpoonup u^*(h) &\iff u^*v(x) u^*(h) u^*v(x^{-1}) = x u^*(h) x^{-1}\\
		&\iff u^*v(x^{-1})x u^*(h) (u^*v(x^{-1})x)^{-1} = u^*(h).
	\end{align*}
Then the latter holding for all $h$ is equivalent to $S(u^*v)*\id(x)=u^*v(x^{-1})x\in C_{\mathbbm{k}G}(\Img(u^*))$.  This being true for all $x$ is in turn equivalent to $S(u^*v)*\id$ being a morphism of algebras.  This completes the proof.	
\end{proof}
Of course, whenever $u$ is a morphism of Hopf algebras, then $u^*$ is identified with a group homomorphism, and the statement of the Lemma is then that $u^*\circ v$ is a normal group homomorphism.  When $u,v$ are also isomorphisms, the condition can be further phrased in group theoretic terms.

\begin{df}\label{def:group-centaut}
$\Aut_c(G) = \{ \phi\in\Aut(G) \ | \ \phi(g)g^{-1}\in Z(G)\}$ is a normal subgroup of $\Aut(G)$, called the central automorphism group. 
\end{df}
 $\Aut_c(G)$ can be equivalently characterized as the centralizer of $\Inn(G)$ in $\Aut(G)$.  It can also be characterized as the normal automorphisms of $G$.  Therefore, when $u^*,v\in\Aut(G)$ the lemma can be stated as saying $u^*\circ v\in \Aut_c(G)$.

Lemma \ref{lem:theta-aut1} gave some conditions that forced $u,v$ to be isomorphisms for $\morphquad\in\Aut(\D(G))$.  The theory of normal group endomorphisms allows us to show that $u,v$ are always isomorphisms for any purely non-abelian group.  We will see later that this is, in fact, a characterization of purely non-abelian groups.

\begin{thm}\label{thm:pure-uv-isoms}
	Let $G$ be a finite group and $\morphquad\in\Aut(\D(G))$.  Then $\ker(u^*)$ and $\ker(v)$ are contained in an abelian direct factor of $G$.  In particular, if $G$ is purely non-abelian then $u,v$ are isomorphisms for all $\morphquad\in\Aut(\D(G))$.
\end{thm}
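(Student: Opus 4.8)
The plan is to run the whole argument through the observation, supplied by Lemma~\ref{lem:alphaisv}, that $f:=u^*\circ v$ is a \emph{normal} group endomorphism of $G$. Indeed, since $\morphquad$ is an isomorphism, Theorem~\ref{autcents} tells us that $u$ is Hopf (so $u^*$ is a genuine group homomorphism $G\to G$) and that $A,B\le Z(G)$; and since relation~\eqref{eq:vurel} always holds, Lemma~\ref{lem:alphaisv} gives that $f$ is normal. I will then use three elementary facts about a normal endomorphism $f$ of a finite group $G$: that $f(Z(G))\subseteq Z(G)$ and $[\ker f,\image f]=1$, both immediate from $f(x^g)=f(x)^g$; and Fitting's lemma, namely that for $n$ large $\ker(f^n)=\ker(f^{n+1})$ and $\image(f^n)=\image(f^{n+1})$, whence $G=\ker(f^n)\times\image(f^n)$ as an internal direct product (a genuine product because $f^n$ is again normal, so $\image(f^n)\trianglelefteq G$ and the commutator fact applies). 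Granting this, $\ker v\subseteq\ker f\subseteq\ker(f^n)$, so the whole point is reduced to showing that the factor $\ker(f^n)$ is \emph{abelian}.

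For that I would feed in the bijectivity of $\morphquad$. By Lemma~\ref{lem:ABcentral}(vi)--(vii) we have $G=B\cdot\image(v)$ and $G=A\cdot\image(u^*)$ with $A,B\le Z(G)$. Since $B$ is central, $u^*(B)$ commutes with $\image(u^*)$ and with $A$, hence $u^*(B)\le Z(G)$; and $\image(u^*)=u^*\!\bigl(B\cdot\image(v)\bigr)=u^*(B)\cdot\image(f)$. Therefore $G=\image(f)\cdot C$ with the central subgroup $C:=A\cdot u^*(B)$. Applying $f^{\,j}$ to this identity and using $f^{\,j}(C)\subseteq Z(G)$ yields $\image(f^{\,j})=\image(f^{\,j+1})\cdot f^{\,j}(C)$; iterating finitely many times gives $\image(f)=\image(f^n)\cdot D$ for some central $D$, hence $G=\image(f^n)\cdot C'$ with $C':=DC$ central. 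Comparing with $G=\ker(f^n)\times\image(f^n)$, the quotient $G/\image(f^n)\cong\ker(f^n)$ is a homomorphic image of the abelian group $C'$, so $\ker(f^n)$ is abelian. This puts $\ker v$ inside the abelian direct factor $\ker(f^n)$ of $G$.

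For $\ker(u^*)$ I would simply invoke the symmetry between $u$ and $v$ from Section~\ref{sec:flip}: the flip $\flipquad$ of $\morphquad$ is again an automorphism of $\D(G)$, and its $v$-component is $u^*$, so the case just treated, applied to the flip, places $\ker(u^*)$ in an abelian direct factor of $G$ as well. Finally, if $G$ is purely non-abelian it has no nontrivial abelian direct factor, forcing $\ker v=\ker(u^*)=1$; an injective endomorphism of a finite group is an automorphism, so $v\in\Aut(G)$ and $u^*\in\Aut(G)$, whence $u=(u^*)^*$ is an isomorphism too.

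The genuine obstacle is the passage from \emph{some} Fitting direct-factor decomposition of $u^*v$ — which is automatic — to one whose kernel factor is \emph{abelian}: this is exactly where the centrality of $A$ and $B$ (Theorem~\ref{autcents}) and the surjectivity/injectivity constraints of Lemma~\ref{lem:ABcentral} become indispensable, through the identity $G=\image(u^*v)\cdot(\text{central})$. A secondary point to verify carefully is that the flip of an automorphism of $\D(G)$ is again an automorphism with quadruple $\flipquad$, so that the $\ker(u^*)$ case really does reduce to the $\ker v$ case.
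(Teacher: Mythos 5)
Your proof is correct, but it takes a genuinely different route from the paper's. The paper writes down the inverse quadruple $(p',u',r',v')$ and reads off the bottom-right entry of the matrix identity $\psi^{-1}\psi=\id$, namely $p'r*v'v=\id$; hence $p'r$ fixes $\ker(v)$ pointwise, and since $p'r$ is a normal endomorphism whose image lies in the central subgroup $\Img(p')$, Fitting's lemma applied to $p'r$ places $\ker(v)$ inside the image factor $\Img((p'r)^n)$, which is abelian \emph{for free}; the case of $\ker(u^*)$ is then handled by dualizing the upper-left entry. You instead apply Fitting to $f=u^*v$, which is normal by Lemma \ref{lem:alphaisv} together with Theorem \ref{autcents}; this puts $\ker(v)$ into the \emph{kernel} factor, so you must do genuine extra work --- the factorizations $G=B\Img(v)$ and $G=A\Img(u^*)$ from Lemma \ref{lem:ABcentral}, the observation $u^*(B)\le Z(G)$, and the iteration $\Img(f^{j})=\Img(f^{j+1})f^{j}(C)$ yielding $G=\Img(f^n)\cdot(\text{central})$ --- to see that $\ker(f^n)\cong G/\Img(f^n)$ is abelian. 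All of these steps check out, as does the preliminary fact $[\ker f,\Img f]=1$ for normal $f$. Your reduction of $\ker(u^*)$ to $\ker(v)$ via the flip is also legitimate and non-circular, since Theorem \ref{flipclosure} does not rely on the present theorem, though it imports machinery the paper only develops in the following section, whereas the paper's dualization stays self-contained at this point of the exposition. What your approach buys is a statement intrinsic to the diagonal pair $(u,v)$: the Fitting kernel of $u^*v$ is itself an abelian direct factor containing $\ker(v)$, with no explicit reference to the inverse morphism. What the paper's approach buys is brevity: the abelianness of the relevant Fitting factor is automatic because it sits inside $\Img(p')\le Z(G)$, so no analogue of your central-factorization argument is needed.
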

\begin{proof}
	Let $\morphquad\in\Aut(\D(G))$, and denote its inverse by $(p',u',r',v')$.  Then
	\begin{align*}
		\morph{u'}{r'}{p'}{v'}\morph{u}{r}{p}{v} = \morph{u'u+r'p}{u'r+r'v}{p'u+v'p}{p'r+v'v} = \morph{1}{0}{0}{1}.
	\end{align*}
	In particular, $p'r+v'v=\id$. Thus for all $g\in\ker(v)$ we have $p'r(v)=v$.  Since $\Img(p')$ is a central group algebra, $p'v$ is clearly a normal endomorphism of $G$ with abelian image.  So by Fitting's Lemma we conclude that $G=\Img((p'v)^n)\times \ker((p'v)^n)$ for all sufficiently large $n\in\BN$.  Since $\ker(v)\subseteq \Img((p'v)^n)$ and $\Img((p'v)^n)$ is abelian, we conclude that $\ker(v)$ is contained in an abelian direct factor of $G$, as desired.  A similar argument applies to $u^*$ after dualizing the upper-left entries.  The remaining claim is by definition.
\end{proof}
\begin{rem}
	An alternative proof can be given using the methods of Section \ref{sec:restriction}.  This method also relies on the theory of normal group endomorphisms, however, so the above proof is more direct.
\end{rem}
This result will allow us to completely describe the elements of $\Aut(\D(G))$ for purely non-abelian groups in Theorem \ref{thm:main}.

%-------------------  Section: Flippable Homomorphisms ---------------------
\section{Flippable Homomorphisms}\label{sec:flip}

\begin{df}
	Let $G,H$ be finite groups and $\psi=\morphquad\in\Hom(\D(G),\D(H))$.  We say that $\psi$ is flippable if $\phi=(p^*,v^*,r^*,u^*)\in\Hom(\D(H),\D(G))$.  In this case we call $(p^*,v^*,r^*,u^*)$ the flip of $\psi$.
\end{df}
\begin{rem}
The taking of duals still gives morphisms with the proper domain and range, after using that $G$ is naturally isomorphic to $G^\text{op}$.  Indeed, the description of $\Hom(\D(G),\D(H))$ can have all co-opposites removed entirely.  Also observe that any flippable morphism necessarily has $u$ a morphism of Hopf algebras, since $u^*$ must necessarily be Hopf for $\phi$ to be a morphism of Hopf algebras.
\end{rem}
 We use the obvious nomenclature derived from the definition.  For example, the act of taking the flip will be called flipping, etc.  Clearly the flip of a flippable morphism is itself flippable.  The primary use for flippable morphisms in this paper is that proving general properties about $v$ for a collection of flippable morphisms, which is also closed under flipping, automatically lets one deduce the same properties for $u^*$.  The following lemma uses flipping to prove facts about $A,B$.

\begin{lem}\label{lem:flipcons}
	Let $G,H$ be finite groups, and suppose $\psi=\morphquad\in\Hom(\D(G),\D(H))$ is flippable.  Then the images of $p$ and $v$ commute.  Moreover, $A,B\leq Z(G)$.
\end{lem}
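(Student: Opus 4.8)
The plan is to exploit the flip $\phi=(p^{*},v^{*},r^{*},u^{*})\in\Hom(\D(H),\D(G))$, which exists precisely because $\psi$ is flippable, together with Lemma~\ref{lem:factorizations}. The key observation is that when $\phi$ is read as a quadruple in the sense of Theorem~\ref{ABMCor2.3}, its ``$u$''-slot is occupied by $v^{*}$ and its ``$p$''-slot by $p^{*}$. So the first relation \eqref{eq:upcocomm}, imposed on $\phi$, says exactly that $v^{*}$ and $p^{*}$ cocommute. As recorded in the proof of Lemma~\ref{lem:factorizations}, a pair of maps cocommutes precisely when the dual pair has commuting images; dualizing once more (and using $(\,\cdot\,)^{**}=\id$ after invoking $G\cong G^{\mathrm{op}}$) we get that $(v^{*})^{*}=v$ and $(p^{*})^{*}=p$ have commuting images in $\mathbbm{k}H$, which is the first assertion. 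Lemma~\ref{lem:factorizations}(3) then immediately gives $A\le Z(G)$.

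For the claim on $B$ (which lies in $H$, so it should read $B\le Z(H)$) I would bootstrap: the flip of $\phi$ is $\psi$ again, hence $\phi$ too is flippable, and the argument above applies verbatim to $\phi$ in place of $\psi$. Its ``$p$''-component is $p^{*}$ and its ``$v$''-component is $u^{*}$, so the images of $p^{*}$ and $u^{*}$ commute; and dualizing the canonical factorization of $p$ from Theorem~\ref{lambdagroups} shows that the abelian subgroup of $H$ attached to $p^{*}$ --- the ``$A$'' of $\phi$ --- is exactly $B$. Applying Lemma~\ref{lem:factorizations}(3) to $\phi$ therefore yields $B\le Z(H)$.

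I do not expect a real obstacle here: the whole argument is bookkeeping with the dictionary of Theorem~\ref{ABMCor2.3} and two applications of Lemma~\ref{lem:factorizations}(3). The only points requiring care are (i) tracking which slot of the flipped quadruple each of $p^{*},v^{*},r^{*},u^{*}$ occupies, and the corresponding fact that the subgroups $A,B$ attached to $p^{*}$ are the $B,A$ attached to $p$; and (ii) noting that \eqref{eq:upcocomm} is among the relations every morphism satisfies --- it is unaffected by the simplifications of Corollary~\ref{rishopf} --- so that it is legitimate to invoke it for $\phi$.
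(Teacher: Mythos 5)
Your proposal is correct and follows the same route as the paper: apply \eqref{eq:upcocomm} to the flip $(p^*,v^*,r^*,u^*)$ to get that $p^*$ and $v^*$ cocommute, dualize to obtain commuting images of $p$ and $v$, invoke Lemma \ref{lem:factorizations}(3) for $A\leq Z(G)$, and flip once more for $B\leq Z(H)$. Your bookkeeping of which slots $p^*,v^*,r^*,u^*$ occupy, and the observation that the ``$A$'' attached to $p^*$ is the $B$ attached to $p$ (so the conclusion for $B$ should indeed read $B\leq Z(H)$), are both accurate.
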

\begin{proof}
	Since $(p^*,v^*,r^*,u^*)\in\Hom(\D(H),\D(G))$, we know that $p^*$ and $v^*$ cocommute by \eqref{eq:upcocomm}.  The dual statement to this is that the images of $p$ and $v$ commute.  That $A\leq Z(G)$ then follows from Lemma \ref{lem:factorizations}.  By flipping, we obtain that $B\leq Z(H)$.
\end{proof}
In particular, when viewed as matrices the flippable morphisms have cocommuting columns and commuting rows.

\begin{prop}\label{prop:flipend}
	Let $G$ be a finite group and let $\End_f(\D(G))$ denote the set of all flippable endomorphisms of $\D(G)$.  Then $\End_f(\D(G))$ is a submonoid of $\End(\D(G))$, and flipping is an anti-isomorphism of monoids of order at most two.
\end{prop}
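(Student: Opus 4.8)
The plan is to verify the three assertions — that $\End_f(\D(G))$ contains the identity, that it is closed under composition, and that flipping $\psi\mapsto(p^*,v^*,r^*,u^*)$ is an order-$\leq 2$ anti-automorphism of it — essentially by unwinding the matrix formalism. The identity endomorphism corresponds to the matrix $\morph{1}{0}{0}{1}$, whose flip is again $\morph{1}{0}{0}{1}$ (since $\id^*=\id$ and $0^*=0$), so $\id\in\End_f(\D(G))$. For order $\leq 2$: applying the flip twice sends $\morphquad$ to $(p^{**},v^{**},r^{**},u^{**})$, and under the canonical identification of a finite-dimensional (co)algebra with its double dual — here the identification of $G$ with $G^\text{op}$ and back, noted in the Remark after the definition of flippable — we recover $\morphquad$; so flipping is an involution on the subset where it is defined, hence in particular order at most two on $\End_f(\D(G))$.

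The substance is closure under composition together with the anti-multiplicativity of flipping. Here I would work entirely with the matrix picture: if $\psi$ has matrix $M=\morph{u}{r}{p}{v}$ then I claim its flip has matrix $M^\flip=\morph{v^*}{u^*}{r^*}{p^*}$ — that is, $M^\flip$ is obtained from $M$ by taking the entrywise dual and then transposing (equivalently, conjugating by the antidiagonal permutation matrix and dualizing). One checks this by comparing with the definition: the flip of $(p,u,r,v)$ is $(p^*,v^*,r^*,u^*)$, and reading off its four components in matrix position $\morph{u\text{-slot}}{r\text{-slot}}{p\text{-slot}}{v\text{-slot}}$ gives upper-left $v^*$, upper-right $u^*$, lower-left $r^*$, lower-right $p^*$, as claimed. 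Now given two flippable morphisms with matrices $M_1,M_2$, their composite has matrix $M_2 M_1$ (composition corresponds to matrix multiplication, with convolution as addition and the antipode as negation), and one computes
\[
(M_2 M_1)^\flip \;=\; M_1^\flip\, M_2^\flip,
\]
since entrywise dualization reverses products of morphisms (i.e. $(fg)^*=g^*f^*$ for composition of the relevant (co)algebra maps, compatibly with convolution) and transposition of $2\times2$ matrices reverses matrix products. Because $M_1^\flip$ and $M_2^\flip$ are genuine morphism-matrices (as $\psi_1,\psi_2$ are flippable), the product $M_1^\flip M_2^\flip$ is again such a matrix, i.e. corresponds to an element of $\Hom(\D(G),\D(G))$; hence $M_2M_1$ has a flip and $\psi_2\circ\psi_1$ is flippable. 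This simultaneously shows $\End_f(\D(G))$ is a submonoid and that flipping is an anti-homomorphism of monoids; combined with the involutivity above it is an anti-isomorphism of order at most two onto itself.

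The main obstacle is making the identity $(M_2M_1)^\flip = M_1^\flip M_2^\flip$ rigorous at the level of morphisms rather than of formal matrices: one must check that entrywise dualization is compatible with the convolution-addition/composition-multiplication structure on matrices of morphisms — concretely, that for morphisms $f,g$ one has $(f*g)^* = f^* * g^*$ and $(f\circ g)^* = g^*\circ f^*$ on the relevant (co)algebra maps, using throughout the natural identifications $\mathbbm{k}^{G\text{ cop}}{}^* \cong \mathbbm{k}G$, $\mathbbm{k}G^\text{op}\cong\mathbbm{k}G$, etc., referenced in the Remark. These are standard but must be assembled carefully so that the bookkeeping of co-opposites (which the Remark notes can be dispensed with) does not introduce sign or orientation errors; once that compatibility is in hand, everything else is formal matrix algebra.
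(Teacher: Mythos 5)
Your overall strategy is the one the paper uses: put the flip in matrix form, get the order statement from $F(F(\psi))=\psi$, and prove closure together with anti-multiplicativity by comparing the matrix of $\psi_2\circ\psi_1$ with the product of the flipped matrices. However, there are two problems. The first is a transcription error: the flip of $\morphquad$ is $\flipquad$, so in the convention $\morph{u}{r}{p}{v}$ its matrix is $\morph{v^*}{r^*}{p^*}{u^*}$ --- dualize every entry and swap the \emph{diagonal} entries --- not $\morph{v^*}{u^*}{r^*}{p^*}$ as you wrote. With your matrix the entries of $F(M_2M_1)$ and of $F(M_1)F(M_2)$ do not even involve the same composites (e.g.\ $u_1^*r_2^*$ appears where $r_1^*p_2^*$ should), so the computation cannot close as stated.

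The second problem is the genuine gap. Even with the correct flip matrix, the identity $F(M_2M_1)=F(M_1)F(M_2)$ is not formal matrix algebra. The upper-left entry of $F(M_2M_1)$ is $(p_2r_1 + v_2v_1)^* = (r_1^*p_2^*)+(v_1^*v_2^*)$ (here $+$ is convolution, which dualization preserves \emph{in order}), while the upper-left entry of $F(M_1)F(M_2)$ is $(v_1^*v_2^*)+(r_1^*p_2^*)$; these agree only if the two factors convolution-commute. Convolution is not commutative, and the rule ``transposition reverses matrix products'' is only valid when the underlying addition is commutative, so this step needs a real argument. That argument is exactly where flippability enters beyond guaranteeing that each $F(M_i)$ is a morphism: by Lemma \ref{lem:flipcons} a flippable morphism has commuting rows and cocommuting columns (so, e.g., $u_1^*$ and $p_1^*$ have commuting images), and this is what allows the paper to reorder the convolution factors entry by entry and conclude $F(\phi\circ\psi)=F(\psi)\circ F(\phi)$. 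Your proposal attributes the remaining work to bookkeeping of co-opposites and the identities $(f*g)^*=f^**g^*$ and $(f\circ g)^*=g^*\circ f^*$ --- which are fine but order-preserving for $*$ --- and never invokes the commutativity that actually makes the entries match.
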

\begin{proof}
	Let $\phi,\psi\in\End_f(\D(G))$, and write
	\begin{align*}
		\psi &= \morph{u_1}{r_1}{p_1}{v_1},\\
		\phi &= \morph{u_2}{r_2}{p_2}{v_2}.
	\end{align*}
	Denote the flips of $\phi,\psi$ by $F(\phi),F(\psi)$ respectively.  By definition $F(F(\phi))=\phi$, which gives the order statement and bijectivity.
	
	Now observe that
	\begin{align}
		\phi\circ \psi &= \morph{u_2 u_1+ r_2 p_1}{u_2 r_1+ r_2 v_1}{p_2 u_1+v_2 p_1}{p_2 r_1+v_2 v_1},\label{eq:compose}\\
		F(\psi)\circ F(\psi) &= \morph{v_1^* v_2^*+r_1^* p_2^*}{v_1^* r_2^* + r_1^* u_2^*}{p_1^* v_2^* + u_1^* p_2^*}{p_1^* r_2^*+u_1^* u_2^*}\label{eq:flipcompose}.
	\end{align}
	Since the columns of $\psi$ cocommute, and the rows of $\phi$ commute, we conclude that $(u_2 u_1 + r_2 p_1)^* = (p_1^* r_2^* + u_1^* u_2^*)$, and similarly for the other entries of \eqref{eq:compose}.  Subsequently, we conclude that $F(\phi\circ\psi)=F(\psi)\circ F(\phi)$, and that $\phi\circ\psi\in\End_f(\D(G))$.  Since the identity map is clearly flippable, and is its own flip, this shows that $\End_f(\D(G))$ is a submonoid, and that $F$ is an anti-morphism of monoids.  This completes the proof.
\end{proof}
We now wish to show that every automorphism of $\D(G)$ is flippable, and that their flips are again automorphisms.  We need the following lemma.  The proof given is thanks to A. Caranti.  %I have a different, independent proof, but this one is nicer and makes other consequences easier to derive.
\begin{lem}\label{lem:flipping-lem}
	Let $G,H$ be groups (not necessarily finite).  Let $v\colon G\to H$ and $w\colon H\to G$ be group homomorphisms.  Suppose $Z(H)\Img(v) = H$ and $Z(G)\Img(w) = G$.  Then the following hold:
	\begin{enumerate}
		\item $C_G(\Img(w)) = Z(G)$
	  \item $C_H(\Img(v)) = Z(H)$
		\item $v(Z(G))\subseteq Z(H)$
		\item $w(Z(H))\subseteq Z(G)$
		\item The following are equivalent:
		\begin{enumerate}
			\item $w\circ v$ is a normal group homomorphism.
			\item $v\circ w$ is a normal group homomorphism.
		\end{enumerate}
	\end{enumerate}
Indeed, if any of the two equivalent conditions hold, then also $\ker(v)\subseteq Z(G)$ and $\ker(w)\subseteq Z(H)$.
\end{lem}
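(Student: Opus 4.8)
The plan is to establish the five enumerated items in the stated order from the two factorization hypotheses alone, and then read off the last assertion from a convenient reformulation of normality that drops out along the way.

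First I would dispose of (i)--(iv), which are immediate. For (i): the inclusion $Z(G)\subseteq C_G(\Img(w))$ is trivial, and conversely, if $x$ centralizes $\Img(w)$ then, writing an arbitrary $g\in G$ as $g=z\,w(h)$ with $z\in Z(G)$ and $h\in H$, one sees $x$ commutes with both factors, hence with $g$; so $x\in Z(G)$. Item (ii) is the mirror statement, using $H=Z(H)\Img(v)$. For (iii): given $z\in Z(G)$, write an arbitrary $h\in H$ as $h=z'v(g)$ with $z'\in Z(H)$ and $g\in G$; then $v(z)$ commutes with $z'$ trivially and with $v(g)$ because $zg=gz$ in $G$, hence with $h$, so $v(z)\in Z(H)$. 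Item (iv) is again the mirror statement.

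The crux is (v), and the key observation I would use is that normality of $w\circ v$ can be expressed purely in terms of centers. Since $H=Z(H)\Img(v)$ we get $\Img(w)=w(Z(H))\,\Img(w\circ v)$, which by (iv) lies in $Z(G)\,\Img(w\circ v)$; combined with (i) this forces $C_G(\Img(w\circ v))=C_G(\Img(w))=Z(G)$. Recalling (cf.\ the computation in the proof of Lemma~\ref{lem:alphaisv}) that a group endomorphism $f$ of $G$ is normal exactly when $f(x)^{-1}x\in C_G(\Img(f))$ for all $x$, it follows that $w\circ v$ is normal if and only if $(w\circ v)(x)\in xZ(G)$ for every $x\in G$, i.e.\ $w\circ v$ induces the identity on $G/Z(G)$; symmetrically, $v\circ w$ is normal iff $(v\circ w)(y)\in yZ(H)$ for every $y\in H$. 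With this in hand the equivalence is quick: assuming $w\circ v$ normal and taking $y\in H$, write $y=z\,v(x)$ with $z\in Z(H)$, $x\in G$, and compute $(v\circ w)(y)=v(w(z))\cdot v(w(v(x)))$; here $v(w(z))\in Z(H)$ by (iv) followed by (iii), while $v(w(v(x)))\in v(x)\,v(Z(G))\subseteq v(x)Z(H)$ using the reformulation for $w\circ v$ and then (iii). Hence $(v\circ w)(y)\in v(x)Z(H)=yZ(H)$, so $v\circ w$ is normal, and the reverse implication is the symmetric statement. Finally, if $w\circ v$ is normal and $g\in\ker(v)$, then $1=w(v(g))\in gZ(G)$, so $g\in Z(G)$; symmetrically $\ker(w)\subseteq Z(H)$.

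The hard part will be nothing more than the reformulation of normality in (v): one must verify that the centralizer $C_G(\Img(w\circ v))$ genuinely collapses to $Z(G)$, which is precisely where (i), (iv) and the factorization hypothesis are used together. Everything after that point is routine manipulation of central cosets, and the two implications in (v)---as well as the two kernel statements---are interchanged by the evident symmetry between $(G,w)$ and $(H,v)$.
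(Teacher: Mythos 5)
Your proof is correct and follows essentially the same route as the paper's: items (i)--(iv) by direct manipulation of the factorizations, and (v) by writing $y=z\,v(x)$ and pushing everything into the relevant centers via (iii) and (iv). The one place you go beyond the paper is in explicitly verifying that $C_G(\Img(w\circ v))$ collapses to $Z(G)$ before invoking normality of $w\circ v$ -- the paper asserts $(w\circ v)(g)g^{-1}\in Z(G)$ ``by assumptions'' and leaves that collapse implicit -- so your write-up is, if anything, slightly more careful on that step.
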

\begin{proof}
		First, note that since $G = Z(G)\Img(w)$ we have
	\[Z(G) = C_G(G) = C_G(Z(G)\Img(w)) = C_G(\Img(w)).\]
	Similarly, $Z(H)=C_H(\Img(v))$.
	
	Now for any $g\in Z(G)$, we have $v(g)\in Z(\Img(v))$.  Since $H= Z(H)\Img(v)$, we have that $Z(\Img(v))\leq Z(H)$.  Thus $v(Z(G))\leq Z(H)$.  Similarly, $w(Z(H))\leq Z(G)$.
	
	We now need to prove the equivalence part.  By symmetry, we need only prove that (a)$\Rightarrow$(b). So suppose that (a) is true.  Let $h\in H$ be arbitrary, and write $h=bv(g)$ for some $b\in Z(G)$ and $g\in G$.  For simplicity of notation, write $v(g)=g^v$, and similarly for other group homomorphisms.  Then
	\[h^{wv}h^{-1} = (bg^v)^{wv} (bg^v)^{-1} = b^{wv}b^{-1} (g^{vw}g^{-1})^v \in Z(H),\]
	which is (b) by definition.  Here we used that $b^{wv}\in Z(H)$ by (iii) and (iv); and that $g^{vw}g^{-1}\in Z(G)$ by assumptions, and then again applying (iii).
	
	The remaining claims about $\ker(v)$ and $\ker(w)$ are then obvious.
\end{proof}

\begin{thm}\label{flipclosure}
	Every element of $\Aut(\D(G))$ is flippable.  Moreover, the flip of an automorphism is itself an automorphism, and flipping is an anti-isomorphism of $\Aut(\D(G))$.
\end{thm}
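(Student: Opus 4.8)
The plan is to verify directly that if $\psi=\morphquad\in\Aut(\D(G))$, then the quadruple $\phi=(p^*,v^*,r^*,u^*)$ satisfies the five compatibility relations of Theorem \ref{ABMCor2.3}, so that $\phi\in\Hom(\D(G),\D(G))$; once this is known, Proposition \ref{prop:flipend} and a short order argument will give that $\phi$ is in fact an automorphism and that flipping is an anti-automorphism of $\Aut(\D(G))$. First I would note that by Theorem \ref{autcents}, since $\psi$ is an isomorphism, $u$ is a morphism of Hopf algebras, so $u^*$ is (identified with) a group homomorphism and the taking of duals in $\phi$ genuinely lands in the right categories (using $G\cong G^\text{op}$, as in the Remark after the definition of flippable). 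So $\phi$ has the correct shape; what must be checked is that the dualized relations hold.

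Next I would translate each of the relations \eqref{eq:upcocomm}--\eqref{eq:vprel} for $\phi$ back into a statement about $\psi$. The relation \eqref{eq:rsplit} for $\phi$ is automatic by Corollary \ref{rishopf} (applied to $u^*$ in the role of $r$), since $u^*$ is Hopf. The relation \eqref{eq:upcocomm} for $\phi$ says $p^*,v^*$ cocommute, equivalently that the images of $p$ and $v$ commute in $\mathbbm{k}G$; the relation \eqref{eq:vprel} for $\phi$ says $u^*(y)p^*(b)u^*(y)^{-1}=p^*(y\rhup b)$, which upon dualizing becomes exactly \eqref{eq:upcocomm} for $\psi$, i.e. that $u,p$ cocommute — which holds. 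The relation \eqref{eq:vurel} for $\phi$ (which by Corollary \ref{rishopf} is the simplified form of \eqref{eq:allrel}) says $v^*(y)\rhup p^*(b)=p^*(y\rhup b)$, the dual of \eqref{eq:vprel} for $\psi$ — again automatic. So the entire content reduces to the single nontrivial point: \emph{the images of $p$ and $v$ commute}, equivalently (by Lemma \ref{lem:factorizations}(iii) and Lemma \ref{lem:flipcons}'s converse reasoning) a statement whose natural proof goes through the normality lemmas.

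Here is where Lemma \ref{lem:flipping-lem} does the work, and I expect this to be the main obstacle. Writing the inverse of $\psi$ as $(p',u',r',v')$ and multiplying the matrices as in the proof of Theorem \ref{thm:pure-uv-isoms}, the off-diagonal identities force $u^*\circ v$ to be a normal group endomorphism of $G$: indeed $p'u+v'p$ and $p'r+v'v$ and the companion relations, together with Lemma \ref{lem:alphaisv}, pin down normality of $u^{*}v$ (or of $v'v$, etc.). By Theorem \ref{autcents} both $A,B$ are central, so by Lemma \ref{lem:factorizations}(vi) and its dual we get $Z(G)\Img(v)=G$ and $Z(G)\Img(u^*)=G$, which are precisely the hypotheses of Lemma \ref{lem:flipping-lem} with $w=u^*$. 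Applying parts (i)--(v) of that lemma, normality of $u^*\circ v$ is equivalent to normality of $v\circ u^*$, and $\ker(v)\subseteq Z(G)$, $\ker(u^*)\subseteq Z(G)$. Chasing this back: $v$ maps into $C_G$ of things it needs to, and $p(e_g)=p(e_{xgx^{-1}})$ forces (via $p$ being an isomorphism $\mathbbm{k}^A\to\mathbbm{k}B$ with $A$ central) that $p,v$ have commuting images. Then $\phi$ satisfies \eqref{eq:upcocomm}, and by the preceding paragraph all five relations, so $\phi\in\Hom(\D(G),\D(G))$ and $\psi$ is flippable.

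Finally, to upgrade ``flippable'' to ``flip is an automorphism'': by Proposition \ref{prop:flipend}, flipping $F$ is an anti-endomorphism of the submonoid $\End_f(\D(G))$ with $F^2=\id$; since $\psi$ is invertible in $\End(\D(G))$ and its inverse $\psi^{-1}$ is itself an automorphism, hence (by what we just proved) flippable, the identity $F(\psi)\circ F(\psi^{-1})=F(\psi^{-1}\circ\psi)=F(\id)=\id$ and likewise on the other side show $F(\psi)$ is invertible with inverse $F(\psi^{-1})$. Thus $F$ restricts to a bijection $\Aut(\D(G))\to\Aut(\D(G))$ which reverses composition and squares to the identity, i.e. an anti-isomorphism of $\Aut(\D(G))$. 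The main effort throughout is the commuting-images claim, and the cleanest route to it is exactly the normality argument via Lemma \ref{lem:flipping-lem} rather than any direct manipulation of \eqref{eq:morphpsi}.
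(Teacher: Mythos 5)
Your overall route is the same as the paper's: Theorem \ref{autcents} and Lemma \ref{lem:factorizations} supply the hypotheses of Lemma \ref{lem:flipping-lem} with $w=u^*$, that lemma shows $\flipquad\in\End(\D(G))$, and Proposition \ref{prop:flipend} upgrades this to the anti-isomorphism statement. However, your bookkeeping of the dualized relations misplaces where the real work happens. Relation \eqref{eq:vurel} for the flip reads $u^*(y)\rhup v^*(b)=v^*(y\rhup b)$ (the $v$-slot of the flip is $u^*$ and the $u$-slot is $v^*$), not $v^*(y)\rhup p^*(b)=p^*(y\rhup b)$, and by Lemma \ref{lem:alphaisv} it is equivalent to normality of $v\circ u^*$. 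This is precisely the one relation that is \emph{not} automatic and is the reason Lemma \ref{lem:flipping-lem}(v) is needed; declaring it "the dual of \eqref{eq:vprel}, again automatic" skips the only genuinely nontrivial verification. Conversely, the condition you single out as the main obstacle — that the images of $p$ and $v$ commute, i.e.\ \eqref{eq:upcocomm} for the flip — is immediate once Theorem \ref{autcents} gives $B\leq Z(G)$, since $\Img(p)=\mathbbm{k}B$ is then central in $\mathbbm{k}G$; no normality argument is required for it. Since your third paragraph does correctly establish that $u^*\circ v$ is normal (from \eqref{eq:vurel} for $\psi$ via Lemma \ref{lem:alphaisv}) and that Lemma \ref{lem:flipping-lem}(v) converts this to normality of $v\circ u^*$, the needed ingredient is present — you have simply aimed it at the wrong relation. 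Redirect that conclusion to verify \eqref{eq:vurel} for the flip, and note that \eqref{eq:upcocomm} and \eqref{eq:vprel} for the flip follow directly from centrality of $A$ and $B$, and the argument closes; the final step via Proposition \ref{prop:flipend} and $F(\psi^{-1})=F(\psi)^{-1}$ is exactly the paper's.
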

\begin{proof}
	By Theorem \ref{autcents} and Lemma \ref{lem:factorizations}, the preceding lemma applies to any $\morphquad\in\Aut(\D(G))$, with $w=u^*$.  The lemma is precisely what is needed for $\flipquad\in\End(\D(G))$.  Let $F$ denote the flip map.  Since $F$ is a monoidal anti-isomorphism of $\End_f(\D(G))$, it follows that $F(\morphquad^{-1})=F(\morphquad)^{-1}$, and in particular that $F$ restricts to an anti-isomorphism $\Aut(\D(G))\to\Aut(\D(G))$.
\end{proof}
\begin{rem}\label{rem:flipclosure}
	Lemma \ref{lem:flipping-lem} can also be used to show other homomorphisms are flippable, such as all morphisms when $G,H$ are abelian.  Automorphisms are just the simplest ones for which it can be verified the Lemma applies to in general.
\end{rem}
In general, flipping is not the same as inverting.  However, any group anti-isomorphism can be expressed as the composition of the inversion map and a group isomorphism.  This group isomorphism is simply the map $\psi\mapsto F(\psi^{-1})$.

Since all automorphisms are flippable, and flippable morphisms satisfy some of the basic requirements to be a bijection, we suspect there is a nice invertibility test that can be phrased in terms of flipping.  In general, flippable morphisms need not be invertible, as can easily be demonstrated by considering the endomorphisms of $\D(\BZ_2)$.  We suspect the following to be true.
\begin{conj}\label{invconj}
	$\psi\in\Hom(\D(G),\D(H))$ is invertible if and only if it is flippable and it sends the integral of $\D(G)$ to the integral of $\D(H)$.
\end{conj}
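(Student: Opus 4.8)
The plan is to pin down which integral is meant and then split the biconditional. Take $\Lambda_G=e_1\#\sum_{g\in G}g$ as \emph{the} integral of $\D(G)$ (the combinatorial, un-normalized one), so that $\varepsilon(\Lambda_G)=|G|$; since $\D(G)$ is semisimple, hence unimodular, $\Lambda_G$ is two-sided. The hypothesis ``$\psi$ sends the integral of $\D(G)$ to the integral of $\D(H)$'' is then the equality $\psi(\Lambda_G)=\Lambda_H$, and it carries two pieces of information at once: applying $\varepsilon$ yields $|G|=|H|$, hence $\dim\D(G)=\dim\D(H)$; and the equality itself puts the integral $\Lambda_H$ inside the Hopf subalgebra $\Img(\psi)$ of $\D(H)$.

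For the ``if'' direction I would argue as follows. $\psi(\Lambda_G)$ is a left integral of $K:=\Img(\psi)$, so $\Lambda_H$ is a left integral of both $K$ and $\D(H)$. A Hopf subalgebra carrying a nonzero integral of the ambient semisimple Hopf algebra is improper: writing $\Lambda_K$ for the normalized integral of $K$, one has $\Lambda_K\D(H)=\{x\in\D(H):kx=\varepsilon(k)x\ \forall k\in K\}$, which has dimension $[\D(H):K]$ by the Nichols--Zoeller freeness of $\D(H)$ over $K$; but $\Lambda_K$ is proportional to the two-sided integral $\Lambda_H$, so $\Lambda_K\D(H)=\mathbbm{k}\Lambda_H$ is one-dimensional, forcing $K=\D(H)$. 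Thus $\psi$ is surjective, and combined with $\dim\D(G)=\dim\D(H)$ it is bijective. Note that flippability is not actually used in this direction.

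For the ``only if'' direction, an invertible $\psi$ sends the two-sided integral $\Lambda_G$ to a two-sided integral $c\Lambda_H$ of $\D(H)$; since $\D(G)\cong\D(H)$ forces $|G|=|H|$ by Theorem \ref{thm:invariance}, applying $\varepsilon$ gives $c=1$, so $\psi(\Lambda_G)=\Lambda_H$. That an invertible $\psi$ is flippable reduces to Theorem \ref{flipclosure}: by Theorem \ref{thm:invariance} fix an isomorphism $\sigma\colon H\to G$; the induced $\D(\sigma)\in\Hom(\D(H),\D(G))$ is easily seen to be flippable (its flip is $\D(\sigma^{-1})$), flippable morphisms are closed under composition (as in Proposition \ref{prop:flipend}), and $\D(\sigma)^{-1}=\D(\sigma^{-1})$ is again flippable, so $\psi$ is flippable iff $\D(\sigma)\circ\psi\in\Aut(\D(G))$ is. Alternatively one re-runs the proof of Theorem \ref{flipclosure} directly: Theorem \ref{autcents} and Lemma \ref{lem:factorizations} give $A\le Z(G)$, $B\le Z(H)$, $Z(G)\Img(u^*)=G$, $Z(H)\Img(v)=H$ and $u$ Hopf, and then Lemma \ref{lem:flipping-lem} with $w=u^*$ supplies exactly the compatibilities needed for $\flipquad\in\Hom(\D(H),\D(G))$.

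The main obstacle is conceptual rather than computational: the statement is genuinely sensitive to the normalization, and is \emph{false} for the normalized integral. If $G=N\times K$ with $K\ne 1$, then $\morph{\mathrm{res}}{0}{0}{\pi}\colon\D(G)\to\D(N)$, built from restriction of functionals along $N\hookrightarrow G$ and from the projection $\pi\colon G\to N$, is a well-defined Hopf morphism by Theorem \ref{ABMCor2.3}, is flippable, is surjective, and sends the normalized integral of $\D(G)$ to that of $\D(N)$ -- yet it is not injective (it sends $e_1\#\sum_g g$ to $|K|\bigl(e_1\#\sum_n n\bigr)$, not to $e_1\#\sum_n n$). So a correct proof must commit to the un-normalized integral, after which -- as above -- the counit already forces $|G|=|H|$ and the argument is short, the one non-elementary ingredient being the Nichols--Zoeller step; while for a version phrased with the normalized integral the conjecture must be weakened, or flippability strengthened to exclude such ``diagonal projections''.
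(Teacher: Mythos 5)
You should know at the outset that the paper does \emph{not} prove this statement: it is stated as Conjecture~\ref{invconj} and explicitly left open (the author only records the easy observations that integral-preservation forces $B\Img(v)=H$ and that flippability forces $A,B$ central). So there is no proof to compare against, and your proposal has to be judged on its own merits. On those merits it holds up, and the normalization analysis is the genuinely valuable part. For the un-normalized integral $e_1\#\sum_g g$, your ``if'' direction is sound: applying the counit gives $|G|=|H|$, the image $K=\Img(\psi)$ is a Hopf subalgebra containing the nonzero integral $\Lambda_H$ (hence semisimple, by Maschke, since $\varepsilon(\Lambda_H)=|H|\neq 0$), and the Nichols--Zoeller count $\dim\Lambda_K\D(H)=[\D(H):K]$ against $\Lambda_H\D(H)=\mathbbm{k}\Lambda_H$ (unimodularity of the double) forces $K=\D(H)$. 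The ``only if'' direction correctly reduces flippability of isomorphisms to Theorem~\ref{flipclosure}; the only thing to flag is that Proposition~\ref{prop:flipend} is stated for \emph{endo}morphisms, so you should say explicitly that its computation \eqref{eq:compose}--\eqref{eq:flipcompose} carries over verbatim to composable morphisms between different doubles (your second, direct route via Theorem~\ref{autcents}, Lemma~\ref{lem:factorizations}(vi)--(vii) and Lemma~\ref{lem:flipping-lem} avoids even this quibble). I also checked your counterexample $\morph{\iota^*}{0}{0}{\pi}\colon\D(N\times K)\to\D(N)$: it satisfies the compatibilities of Theorem~\ref{ABMCor2.3}, is flippable with flip $\morph{\pi^*}{0}{0}{\iota}$, is surjective and non-injective, and sends the normalized integral to the normalized integral.

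Two consequences of your argument deserve emphasis, because they change the status of the conjecture rather than merely settling it. First, under the un-normalized reading flippability is \emph{redundant}: integral-preservation alone implies invertibility, which is strictly stronger than what was conjectured and suggests the author did not have this reading in mind. Second, the remark immediately following the conjecture ties it to preservation of higher Frobenius--Schur indicators, and the indicator formula $\nu_n(\chi)=\chi(\Lambda^{[n]})$ uses the \emph{normalized} integral; under that reading your $\D(N\times K)\to\D(N)$ example refutes the conjecture outright (and indeed exhibits exactly the ``non-bijective flippable homomorphism whose induced functor preserves all indicators'' that the author warns would exist). So your write-up is correct, but it should be framed as a dichotomy -- proof for one normalization, counterexample for the other -- rather than as a proof of the conjecture as stated.
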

It is easy to verify that in order to preserve integrals, we must have $B\Img(v)=H$, and from being flippable we know $A,B$ are central.

If the conjecture were false, then there would exist finite groups $G,H$ and a non-bijective, flippable homomorphism $\psi\colon\D(G)\to\D(H)$ such that the induced functor $\Rep(\D(H))\to\Rep(\D(G))$ preserves all higher Frobenius-Schur indicators \cite{KSZ2}.  Of special interest would be the case $G=H$.  Morever, the induced functor for flippable homomorphisms tends to take a rather nice form, as will be described in Section \ref{sec:modules}.

%-------------------  Section: Subgroups of Aut(D(G)) ---------------------
\section{Some subgroups of \texorpdfstring{$\Aut(\D(G))$}{Aut(D(G))}}\label{sec:subgroups}
The goal of this section is to introduce several subgroups of $\Aut(\D(G))$.  We will use these in the subsequent sections to determine the membership and structure of $\Aut(\D(G))$ for purely non-abelian groups.

\begin{df}\label{def:lambdagroup}
	Let $G$ be a finite group, and set $\Lambda(G) = \{ (p,\id,1,\id)\in\Aut(\D(G)) \}$.
\end{df}
The choice of notation comes from the original description of morphisms $\D(G)\to\D(H)$ given by \citet{ABM}.  There, $p$ was written as $p(e_g)=\sum_{h\in H}\lambda(g,h)h$.

\begin{prop}\label{prop:lambdagroup}
	$\Lambda(G)$ is an abelian subgroup of $\Aut(\D(G))$, and $\Lambda(G)\cong \End(Z(G))$ as groups.
\end{prop}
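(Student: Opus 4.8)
The plan is to first identify $\Lambda(G)$ as a \emph{set} with the collection of all Hopf algebra morphisms $p\colon\mathbbm{k}^{G\text{ cop}}\to\mathbbm{k}G$ for which the associated subgroups $A,B$ of Theorem~\ref{lambdagroups} are both central; then to check that this collection is closed under convolution and that composition in $\Lambda(G)$ matches convolution; and finally to identify it with $\End(Z(G))$ by restricting every such $p$ to the centre.

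For the first step I would feed $u=\id$, $r$ trivial, $v=\id$ into the compatibility conditions of Theorem~\ref{ABMCor2.3}. Equation~\eqref{eq:rsplit} is vacuous (and omittable by Corollary~\ref{rishopf}), \eqref{eq:vurel} becomes an identity, \eqref{eq:usplit} collapses to $ab=ab$ as soon as $B\le Z(G)$ (the central element $p(a_{(2)})$ acting trivially under $\rightharpoonup$, i.e.\ as $\varepsilon(a_{(2)})$), \eqref{eq:upcocomm} is literally the assertion $A\le Z(G)$, and \eqref{eq:vprel} holds precisely because $A,B$ are central. So $(p,\id,1,\id)$ is a Hopf endomorphism of $\D(G)$ iff $p$ is Hopf with $A,B\le Z(G)$; conversely every element of $\Lambda(G)$ has central $A,B$ immediately by Theorem~\ref{autcents}. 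Any such endomorphism is automatically invertible: from the composition formula \eqref{eq:compose} one reads off $(p_2,\id,1,\id)\circ(p_1,\id,1,\id)=(p_2*p_1,\id,1,\id)$ with $*$ the convolution, and $S\circ p$ (again Hopf with the same $A,B$) is a two-sided inverse by the antipode axiom $(S\circ p)*p=\varepsilon(\cdot)1=p*(S\circ p)$. Hence $\Lambda(G)=\{(p,\id,1,\id): p\text{ Hopf},\ A,B\le Z(G)\}$; this set is closed under composition, contains the identity and is closed under inverses, so it is a subgroup, and $(p,\id,1,\id)\mapsto p$ is a group isomorphism onto this set of $p$'s equipped with $*$.

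For the remaining steps I would restrict everything to $Z:=Z(G)$. Since $A\le Z$ forces $p(e_g)=0$ for $g\notin Z$, every admissible $p$ descends through the surjective "restriction of functions" Hopf map $\mathbbm{k}^{G\text{ cop}}\twoheadrightarrow\mathbbm{k}^Z$ followed by $\mathbbm{k}Z\hookrightarrow\mathbbm{k}G$; write $\tilde p\colon\mathbbm{k}^Z\to\mathbbm{k}Z$ for the middle factor. Conversely each Hopf morphism $\tilde p$ produces an admissible $p$ (its $A,B$ land in $Z$), so $p\mapsto\tilde p$ is a bijection, and it carries $*$ to $*$. Now $Z$ is abelian, so $\mathbbm{k}^Z\cong\mathbbm{k}\widehat{Z}$ canonically (the grouplikes of $\mathbbm{k}^Z$ are the characters of $Z$ and form a basis), Hopf morphisms $\mathbbm{k}\widehat{Z}\to\mathbbm{k}Z$ are exactly the linear extensions of group homomorphisms $\widehat{Z}\to Z$, and $*$ corresponds to the pointwise product, which is the addition in $\Hom(\widehat{Z},Z)$ because $Z$ is abelian. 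Thus $\Lambda(G)\cong\Hom(\widehat{Z},Z)$ as groups; in particular $\Lambda(G)$ is abelian. Finally, the standing hypothesis on $\mathbbm{k}$ gives an isomorphism $\widehat{Z}\cong Z$, and precomposition with it is an additive isomorphism $\Hom(\widehat{Z},Z)\xrightarrow{\sim}\End(Z)$, so $\Lambda(G)\cong\End(Z(G))$ as groups.

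I expect the main obstacle to be the middle bit, the passage $p\mapsto\tilde p$: because $G$ need not be abelian, $\mathbbm{k}^{G\text{ cop}}$ is not cocommutative, so one must be careful that restriction to $\mathbbm{k}^Z$ is a surjective \emph{Hopf} morphism, that $p$ genuinely factors through the corresponding Hopf ideal (this is exactly where $A\le Z(G)$ is used), that \emph{every} Hopf morphism $\mathbbm{k}^Z\to\mathbbm{k}Z$ arises this way, and that convolution of the $\tilde p$'s computes convolution of the $p$'s. Once one is working inside the cocommutative $\mathbbm{k}^Z$, both the commutativity of $\Lambda(G)$ and the passage to $\Hom(\widehat{Z},Z)$ are routine.
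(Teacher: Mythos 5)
Your proposal is correct and follows essentially the same route as the paper: characterize membership in $\Lambda(G)$ by the condition $A,B\leq Z(G)$ (using Theorem \ref{autcents} for necessity and a direct check plus the convolution inverse $S\circ p$ for sufficiency), observe that composition corresponds to convolution of the $p$-entries, reduce to the centre, and identify the result with $\Hom(\widehat{Z(G)},Z(G))\cong\End(Z(G))$. The only cosmetic difference is that the paper deduces commutativity of $\Lambda(G)$ directly from the fact that the $p$'s commute and cocommute with identity maps, whereas you obtain it from the final identification with the abelian group $\Hom(\widehat{Z(G)},Z(G))$; both are valid.
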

\begin{proof}
We claim that $(p,\id,1,\id)\in\Aut(\D(G))$ $\iff$ $A,B\leq Z(G)$.

  Suppose that $(p,\id,1,\id)\in\Aut(\D(G))$.  Since the morphism is flippable, $p$ commutes and cocommutes with identity maps.  Thus $A$ and $B$ are central.
	
	On the other hand, suppose that $p$ is determined by $A,B\leq Z(G)$.  It is easy to verify that $(p,\id,1,\id)\in\End(\D(G))$.  In matrix notation, we observe that
		\[\morph{1}{0}{p}{1} \morph{1}{0}{-p}{1} = \morph{1}{0}{0}{1}.\]
		This means that $(p,\id,1,\id)$ is invertible with inverse $(Sp,\id,1,\id)$.  That $Sp$ is a morphism of Hopf algebras, and subsequently that \[(Sp,\id,1,\id)\in\End(\D(G))\], is guaranteed by the assumption that $A,B\leq Z(G)$.
	
This proves the claim.  It remains to show that $\Lambda(G)$ is an abelian group and to exhibit the desired isomorphism.
	
Let $(p_1,\id,1,\id),(p_2,\id,1,\id)\in\Lambda(G)$.  Then the composition is
	\[ \morph{1}{0}{p_1}{1} \morph{1}{0}{p_2}{1} = \morph{1}{0}{p_1+p_2}{1}.\]
	Equivalently, $(p_1,\id,1,\id)\circ(p_2,\id,1,\id) = (p_1*p_2,\id,1,\id)$.  It follows that $\Lambda(G)$ is a subgroup of $\Aut(\D(G))$.	As previously noted, $p_1$ and $p_2$ commute and cocommute with identity maps.  Therefore they convolution commute: $p_1*p_2=p_2*p_1$.  Therefore $\Lambda(G)$ is abelian.
	
	The claim in fact shows that $\Lambda(G)$ depends only on $Z(G)$, and in particular $\Lambda(G)\cong \Lambda(Z(G))$.  So we may suppose that $G$ is abelian.  Then a morphism of Hopf algebras $p\colon \mathbbm{k}^G\to \mathbbm{k}G$ is exactly a group homomorphism $\widehat{G}\to G$.  Fixing an isomorphism $\widehat{G}\cong G$, any group homomorphism $\widehat{G}\to G$ is equivalent to a group homomorphism $G\to \widehat{G}$, as well as equivalent to a group homomorphism $G\to G$.  It is easy to see that these in fact give group isomorphisms.
	
	We conclude that \[\Lambda(G)\cong \Hom(\widehat{Z(G)},Z(G))\cong\BCh{Z(G)}\cong \End(Z(G))\] for all finite groups $G$.  This completes the proof.
\end{proof}

\begin{prop}\cite[Proposition 2.3.5]{Co}
	Let $G$ be a finite group.  Then $\Aut(G)$ embeds as a subgroup of $\Aut(\D(G))$, where $\sigma\in\Aut(G)$ is sent to the automorphism defined by $e_g\# h\mapsto e_{\sigma(g)}\#\sigma(h)$.  Equivalently, \[\sigma\mapsto \morph{(\sigma^{-1})^*}{0}{0}{\sigma}.\]
\end{prop}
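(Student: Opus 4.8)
The plan is to verify directly that the $\mathbbm{k}$-linear map $\Psi_\sigma\colon\D(G)\to\D(G)$ determined on the standard basis by $e_g\#h\mapsto e_{\sigma(g)}\#\sigma(h)$ is a bialgebra automorphism, and then to read off its components from the formulas \eqref{udef}--\eqref{vdef} of Theorem \ref{ABMCor2.3}. An equivalent but less transparent route is to feed the quadruple with $u=(\sigma^{-1})^*$, $v=\sigma$ and $p,r$ trivial straight into Theorem \ref{ABMCor2.3} and Corollary \ref{rishopf} and check the (very few) nontrivial compatibilities; I would only mention this in passing.

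First I would observe that $\Psi_\sigma$ permutes the basis $\{e_g\#h\}_{g,h\in G}$, hence is a linear bijection with inverse $\Psi_{\sigma^{-1}}$. For multiplicativity I would apply $\Psi_\sigma$ to $(e_g\#g')(e_h\#h')=\delta_{g,\,g'\rightharpoonup h}\,e_g\#g'h'$ and compare with $\Psi_\sigma(e_g\#g')\Psi_\sigma(e_h\#h')=\delta_{\sigma(g),\,\sigma(g')\rightharpoonup\sigma(h)}\,e_{\sigma(g)}\#\sigma(g')\sigma(h')$; the two agree because $\sigma$ is a homomorphism (so $\sigma(g')\sigma(h)\sigma(g')^{-1}=\sigma(g'\rightharpoonup h)$ and $\sigma(g'h')=\sigma(g')\sigma(h')$) and injective (so $\delta_{\sigma(g),\sigma(g'\rightharpoonup h)}=\delta_{g,\,g'\rightharpoonup h}$), while unitality is immediate since $\sigma$ permutes $G$, giving $\Psi_\sigma(\varepsilon\#1)=\varepsilon\#1$. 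For the coalgebra structure I would apply $\Psi_\sigma\otimes\Psi_\sigma$ to $\Delta(e_g\#h)=\sum_{xy=g}(e_y\#h)\otimes(e_x\#h)$ and reindex the sum over $xy=g$ by $x\mapsto\sigma(x)$, $y\mapsto\sigma(y)$ to recover $\Delta(\Psi_\sigma(e_g\#h))$; the counit relation reads $\varepsilon(e_{\sigma(g)}\#\sigma(h))=\delta_{\sigma(g),1}=\delta_{g,1}$, and compatibility with the antipode is then automatic for a bialgebra map between Hopf algebras. Finally $\Psi_\sigma\circ\Psi_\tau=\Psi_{\sigma\tau}$ on basis elements shows $\sigma\mapsto\Psi_\sigma$ is a group homomorphism $\Aut(G)\to\Aut(\D(G))$, and it is injective because $\Psi_\sigma=\id$ forces $e_{\sigma(g)}\#1=e_g\#1$ for all $g$, i.e. $\sigma=\id$.

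To match the matrix description I would substitute $\psi=\Psi_\sigma$ into \eqref{udef}--\eqref{vdef}: one gets $u(e_g)=e_{\sigma(g)}$, which is exactly the linear dual of the group isomorphism $\sigma^{-1}$ under $(\mathbbm{k}^G)^*\cong\mathbbm{k}G$, so $u=(\sigma^{-1})^*$; $p(e_g)=\operatorname{ev}_1(e_{\sigma(g)})\,1=\delta_{g,1}\,1$ and $r(g)=\varepsilon$ are the trivial morphisms; and $v(g)=\sigma(g)$. Running \eqref{eq:morphpsi} the other way on this quadruple recovers $\Psi_\sigma$, so $\Psi_\sigma$ corresponds to $\morph{(\sigma^{-1})^*}{0}{0}{\sigma}$. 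I do not anticipate any real obstacle: the argument is pure bookkeeping once the multiplication, comultiplication, and the maps \eqref{udef}--\eqref{vdef} are unwound. The only points that deserve a little attention are the co-opposite convention on the $\mathbbm{k}^G$-tensorand when testing $\Delta$ (harmless here since $\sigma$ is a bijection anyway) and keeping the variance of $(-)^*$ straight, so that the upper-left entry comes out as $(\sigma^{-1})^*$ --- the map $e_g\mapsto e_{\sigma(g)}$ --- rather than $\sigma^*$, the inversion being forced by contravariance of dualization against the way $\D(G)$ carries $G$ in its $\mathbbm{k}^G$-factor.
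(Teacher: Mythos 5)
Your verification is correct and complete: the multiplicativity check via $\sigma(g'\rightharpoonup h)=\sigma(g')\rightharpoonup\sigma(h)$, the reindexing of $\Delta$, and the extraction of the quadruple $u=(\sigma^{-1})^*$, $p,r$ trivial, $v=\sigma$ from \eqref{udef}--\eqref{vdef} all go through, and you correctly handle the contravariance that forces $(\sigma^{-1})^*$ rather than $\sigma^*$ in the upper-left entry. The paper itself supplies no proof here (the proposition is cited directly from \citep{Co}), and your direct basis-level check is exactly the standard argument one would write.
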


In particular, we identify $\Aut(G)$ as a subgroup of $\Aut(\D(G)$ via the image of this embedding. 

\begin{prop}\label{prop:bch-auts}
	For any $r\in\BCh{G}$,
	\[\morph{1}{r}{0}{1}\in \Aut(\D(G)).\]
	In particular, this gives an embedding $\BCh{G}\hookrightarrow\Aut(\D(G))$, sending $r\in\BCh{G}$ to the map $e_g\# h \mapsto r(h)e_g\# h$.
\end{prop}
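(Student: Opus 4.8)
The plan is to check directly that the quadruple $(p,u,r,v)=(\eta\varepsilon,\id,r,\id)$ meets the compatibility conditions of Theorem~\ref{ABMCor2.3} and then to exhibit an explicit two-sided inverse, exactly in the style of the treatment of $\Lambda(G)$ in Proposition~\ref{prop:lambdagroup}. Here $p=\eta\varepsilon$ denotes the trivial Hopf morphism $a\mapsto\varepsilon(a)1$, so that $A=B=1$ in the notation of Lemma~\ref{lambdagroups}, while $r\in\BCh{G}=\Hom(G,\widehat G)$ is, by Corollary~\ref{rishopf}, precisely the data of a Hopf morphism $r\colon kG\to\mathbbm{k}^{G\text{ cop}}$ sending $G$ into $\widehat G$. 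With these choices, equations \eqref{eq:upcocomm} and \eqref{eq:vprel} are immediate because $p=\eta\varepsilon$ and $\varepsilon$ is invariant under the conjugation action $\rightharpoonup$; equation \eqref{eq:usplit} holds because $u=\id$ is a morphism of Hopf algebras (cf.\ the remark after Corollary~\ref{thetahopf1}); and equation \eqref{eq:vurel} reads $h\rightharpoonup b=h\rightharpoonup b$ since $u=v=\id$. Hence $\morph{1}{r}{0}{1}\in\End(\D(G))$.

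Writing $Sr$ for the convolution inverse of $r$ --- that is, the composition of $r$ with the antipode of $kG$, which is the bicharacter $r^{-1}$ and so again lies in $\BCh{G}$ --- I would then apply the composition formula \eqref{eq:compose} (with convolution playing the role of addition) to obtain
\[\morph{1}{r}{0}{1}\,\morph{1}{Sr}{0}{1}\;=\;\morph{1}{0}{0}{1}\;=\;\morph{1}{Sr}{0}{1}\,\morph{1}{r}{0}{1},\]
where one uses that $r\circ(\eta\varepsilon)$ and $(\eta\varepsilon)\circ r$ are themselves convolution units in the appropriate $\Hom$-spaces and that $r*Sr=Sr*r=\eta\varepsilon$. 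Thus $\morph{1}{r}{0}{1}\in\Aut(\D(G))$; alternatively, once membership in $\End(\D(G))$ is established, Lemma~\ref{lem:theta-aut1} applies directly because $p$ is trivial.

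For the embedding statement, the same formula \eqref{eq:compose} gives $\morph{1}{r_1}{0}{1}\,\morph{1}{r_2}{0}{1}=\morph{1}{r_1*r_2}{0}{1}$, and under the identification $\BCh{G}=\Hom(G,\widehat G)$ the convolution $r_1*r_2$ is exactly the pointwise product of characters, i.e.\ the group operation of $\BCh{G}$; since $r$ is visibly recoverable from $\morph{1}{r}{0}{1}$, the assignment is an injective group homomorphism, hence an embedding. Finally the formula for the action on simple tensors follows from \eqref{eq:morphpsi}: with $u=v=\id$ and $p=\eta\varepsilon$ one has $\psi(e_g\#h)=(e_g)_{(1)}\,r(h)\;\#\;\varepsilon\bigl((e_g)_{(2)}\bigr)\,h$ (implicit Sweedler summation), which collapses to $r(h)e_g\#h$ by the counit axiom in $\mathbbm{k}^{G\text{ cop}}$.

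None of this presents a real obstacle; the only points demanding care are bookkeeping ones --- distinguishing the literal zero map from the convolution unit $\eta\varepsilon$ in each matrix slot, keeping the co-opposite comultiplication straight when invoking \eqref{eq:morphpsi}, and confirming that the convolution inverse of the Hopf morphism $r$ is once more a bicharacter so that $Sr\in\BCh{G}$.
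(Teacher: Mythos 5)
Your proposal is correct and takes essentially the same route as the paper: both verify membership in $\End(\D(G))$ by direct (matrix) computation and then deduce invertibility from the group structure of $\BCh{G}$ --- the paper via the finite order of $r$, you via the explicit convolution inverse $Sr$, which is the same observation. One caveat: your ``alternatively'' via Lemma~\ref{lem:theta-aut1} does not work, since that lemma takes $\psi\in\Aut(\D(G))$ as a hypothesis (and concludes only that $u,v$ are isomorphisms), so it cannot be used to establish that $\psi$ is an automorphism; drop that aside and the argument is fine.
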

\begin{proof}
	Given $r\in\BCh{G}$, we see the desired matrix is trivially an element of $\End(\D(G))$.  If $r$ has order $n$, we can easily see that the matrix also has order $n$, so that the matrix is an element of $\Aut(\D(G))$ as claimed.  It is another simple check that the claimed embedding is actually an injective group homomorphism.
\end{proof}
We identify $\BCh{G}$ with the image of this embedding.

\begin{rem}\label{rem:r-to-omega}
	If we write $r(h)=\sum \omega(h,x)e_x$, where $\omega$ is some bicharacter, then the description of the map is \[e_g\# h \mapsto \omega(h,g)e_g \# h.\]
\end{rem}

As $\Aut_c(G)$ is a subgroup of $\Aut(G)$, it can be viewed as a subgroup of $\Aut(\D(G))$ through the embedded copy of $\Aut(G)$.  However, in light of Lemma \ref{lem:alphaisv}, there are additional copies of $\Aut_c(G)$.
\begin{prop}\label{prop:autc-embed}
	For any $w\in\Aut_c(G)$ we have \[\morph{1}{0}{0}{w}, \morph{(w^{-1})^*}{0}{0}{1}\in\Aut(\D(G)).\]
	
	In particular, these give two distinct, injective group homomorphisms $\Aut_c(G)\to\Aut(\D(G))$ whose images intersect trivially.
\end{prop}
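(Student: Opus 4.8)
The plan is to verify directly that each of the two displayed quadruples satisfies the compatibility relations of Theorem~\ref{ABMCor2.3}, in the reduced form of Corollary~\ref{rishopf}, so that it lies in $\End(\D(G))$, and then to promote it to $\Aut(\D(G))$ using that $\Aut_c(G)$ is finite. The two-homomorphism claim is then a formal consequence of the matrix calculus.

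First I would treat $\psi_w:=\morph{1}{0}{0}{w}$, the quadruple $\morphquad$ with $p$ and $r$ trivial, $u=\id$, and $v=w$. Here \eqref{eq:rsplit} is dropped by Corollary~\ref{rishopf}; \eqref{eq:usplit} holds because $u$ is a morphism of Hopf algebras (as observed in Section~\ref{sec:pre}); and \eqref{eq:upcocomm} and \eqref{eq:vprel} are immediate, since $p$ trivial makes both sides of each collapse through the counit and unit. The one relation with content is \eqref{eq:vurel}, and by Lemma~\ref{lem:alphaisv} it holds precisely when $u^*\circ v=\id\circ w=w$ is a normal morphism of algebras; for the automorphism $w$ this is exactly the condition $w\in\Aut_c(G)$ recalled after Definition~\ref{def:group-centaut}. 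So $\psi_w\in\End(\D(G))$ for every $w\in\Aut_c(G)$. Since the matrix composition rule \eqref{eq:compose} yields $\psi_{w_1}\circ\psi_{w_2}=\psi_{w_1 w_2}$, the map $w\mapsto\psi_w$ is a monoid homomorphism out of the finite group $\Aut_c(G)$, so its image consists of units of $\End(\D(G))$, i.e. lies in $\Aut(\D(G))$ (equivalently, $\psi_w^n=\psi_{w^n}=\id$ as soon as $n$ is a multiple of the order of $w$).

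For the second matrix I would observe that $\morph{(w^{-1})^*}{0}{0}{1}$ is precisely the flip of $\psi_{w^{-1}}=\morph{1}{0}{0}{w^{-1}}$: by definition flipping sends $\morphquad$ to $(p^*,v^*,r^*,u^*)$, which swaps the two diagonal entries of the matrix and dualizes all four components. Since $w^{-1}\in\Aut_c(G)$ too, the previous step gives $\psi_{w^{-1}}\in\Aut(\D(G))$, whence Theorem~\ref{flipclosure} shows its flip $\morph{(w^{-1})^*}{0}{0}{1}$ is again in $\Aut(\D(G))$. (One could instead verify the compatibilities by hand: $u=(w^{-1})^*$ is Hopf with $u^*=w^{-1}$, $v=\id$, $p,r$ trivial, and \eqref{eq:vurel} reduces to normality of $w^{-1}$.)

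Finally, I would record that both assignments are injective group homomorphisms with trivially intersecting images. That $w\mapsto\psi_w$ is a homomorphism was noted above; $w\mapsto\morph{(w^{-1})^*}{0}{0}{1}$ is the composite of the anti-automorphism $w\mapsto w^{-1}$ of $\Aut_c(G)$, the homomorphism $w\mapsto\psi_w$, and the flip anti-isomorphism of Proposition~\ref{prop:flipend} and Theorem~\ref{flipclosure}, hence itself a homomorphism (or directly, $(w_1^{-1})^*\circ(w_2^{-1})^*=(w_2^{-1}w_1^{-1})^*=((w_1w_2)^{-1})^*$). Injectivity is clear since the lower-right entry recovers $w$ in the first family and the upper-left entry is $(w^{-1})^*$ in the second. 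If an automorphism lies in both images, its upper-left entry is both $\id$ and $(w^{-1})^*$, forcing $w=\id$ and hence the automorphism to be the identity; so the images meet trivially, and the two embeddings are plainly distinct once $\Aut_c(G)\ne 1$. I do not expect a genuine obstacle here — it is routine verification in the matrix calculus of Theorem~\ref{ABMCor2.3} — the only delicate points being to identify \eqref{eq:vurel} with normality via Lemma~\ref{lem:alphaisv}, so that $\Aut_c(G)$ is exactly the right parameter group, and to keep the variance straight so that $(w^{-1})^*$, rather than $w^*$, is the entry making the second family covariant.
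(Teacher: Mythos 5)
The paper states this proposition without proof, treating it as a routine verification in the matrix calculus, and your argument supplies exactly that verification correctly: the only relation with content is \eqref{eq:vurel}, which via Lemma~\ref{lem:alphaisv} (or directly) is equivalent to $w\in\Aut_c(G)$, and finiteness of $\Aut_c(G)$ upgrades the resulting endomorphisms to automorphisms. Your use of Theorem~\ref{flipclosure} to handle the second matrix as the flip of $\morph{1}{0}{0}{w^{-1}}$ is a legitimate shortcut (and non-circular, since Section~\ref{sec:flip} precedes this proposition), though a direct check symmetric to the first case works just as well; the homomorphism, injectivity, and trivial-intersection claims are all verified soundly by reading off matrix entries.
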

Indeed, the embeddings also intersect trivially with the usual embedding of $\Aut(G)$, and corresponding sub-embedding of $\Aut_c(G)$.  The second embedding will be the one we use in the subsequent.  So when we refer to $\Aut_c(G)$ as a subgroup of $\Aut(\D(G))$, it will be to this second embedding.  To help avoid potential confusion with these three distinct copies of $\Aut_c(G)$, we will prefer, whenever possible, to consider the subgroup generated by $\Aut(G)$ and either of the embeddings of $\Aut_c(G)$ above instead.

\begin{df}\label{def:spautc}
	Let $G$ be any group (not necessarily finite).  We define $\SpAutc(G)$, the split central automorphism group of $G$, by
	\[\SpAutc(G) = \{ (w,v)\in\Aut(G)\times\Aut(G) \ \colon \ w^{-1}\circ v\in\Aut_c(G) \}.\]
	The binary operation is inherited from $\Aut(G)\times \Aut(G)$.
\end{df}
It may not be immediately clear that this is actually a subgroup, so we justify the terminology with the following result.

\begin{lem}\label{lem:spautc}
  $\SpAutc(G)$ is a subgroup of $\Aut(G)\times\Aut(G)$.  Furthermore, \[\SpAutc(G)\cong\Aut_c(G)\rtimes\Aut(G)\] with the usual conjugation action.
\end{lem}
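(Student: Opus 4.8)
The plan is to realize $\SpAutc(G)$ as an internal semidirect product by producing a split short exact sequence, using only that $\Aut_c(G)$ is a normal subgroup of $\Aut(G)$ (Definition \ref{def:group-centaut}).

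First I would check that $\SpAutc(G)$ is a subgroup of $\Aut(G)\times\Aut(G)$. Clearly $(\id,\id)\in\SpAutc(G)$. Given $(w,v)\in\SpAutc(G)$, conjugating $w^{-1}v\in\Aut_c(G)$ by $w$ shows $vw^{-1}\in\Aut_c(G)$, hence $wv^{-1}=(vw^{-1})^{-1}\in\Aut_c(G)$; since also $(w,v)^{-1}=(w^{-1},v^{-1})$ and $(w^{-1})^{-1}v^{-1}=wv^{-1}$, the inverse lies in $\SpAutc(G)$. For closure, given $(w_1,v_1),(w_2,v_2)\in\SpAutc(G)$ I would write $(w_1w_2)^{-1}(v_1v_2)=w_2^{-1}(w_1^{-1}v_1)v_2=\big(w_2^{-1}(w_1^{-1}v_1)w_2\big)\big(w_2^{-1}v_2\big)$, which is a product of two elements of $\Aut_c(G)$ — the first by normality, the second by hypothesis — hence lies in $\Aut_c(G)$. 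So $\SpAutc(G)\leq\Aut(G)\times\Aut(G)$.

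Next I would consider the second-coordinate projection $\pi\colon\SpAutc(G)\to\Aut(G)$, $(w,v)\mapsto v$. It is a homomorphism, it is surjective since $(v,v)\in\SpAutc(G)$ for every $v\in\Aut(G)$, and its kernel is $N:=\{(w,\id):w^{-1}\in\Aut_c(G)\}=\{(w,\id):w\in\Aut_c(G)\}$, which is isomorphic to $\Aut_c(G)$ via $(w,\id)\mapsto w$. Moreover $s\colon\Aut(G)\to\SpAutc(G)$, $v\mapsto(v,v)$, is a homomorphic section of $\pi$, so the sequence $1\to N\to\SpAutc(G)\xrightarrow{\pi}\Aut(G)\to 1$ splits.

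Finally I would verify that $\SpAutc(G)$ is the internal semidirect product of $N$ and $s(\Aut(G))$: the intersection is trivial, and any $(w,v)\in\SpAutc(G)$ factors as $(wv^{-1},\id)(v,v)$ with $wv^{-1}\in\Aut_c(G)$ by the computation in the first paragraph. Since $s(v)$ acts on $N$ by $(v,v)(w,\id)(v,v)^{-1}=(vwv^{-1},\id)$, under the identification $N\cong\Aut_c(G)$ this is the usual conjugation action of $\Aut(G)$ on its normal subgroup $\Aut_c(G)$, giving $\SpAutc(G)\cong\Aut_c(G)\rtimes\Aut(G)$. There is no substantial obstacle; the only point requiring care is the repeated use of normality of $\Aut_c(G)$ in $\Aut(G)$ to commute elements past automorphisms, e.g. to deduce $wv^{-1}\in\Aut_c(G)$ from $w^{-1}v\in\Aut_c(G)$.
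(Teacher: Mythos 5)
Your proof is correct, and the semidirect-product decomposition you use --- the kernel $N=\{(w,\id):w\in\Aut_c(G)\}$ of the second-coordinate projection together with the diagonal section $v\mapsto(v,v)$ --- is exactly the one in the paper (the subgroups $H$ and $K$ there). Where you genuinely diverge is in the subgroup verification. For closure under composition the paper writes each central automorphism explicitly as $\alpha * z$ with $z\in\Hom(G,Z(G))$ and carries out an element-wise computation (invoking $Z(G)\Char G$), whereas you simply rewrite $(w_1w_2)^{-1}(v_1v_2)=\bigl(w_2^{-1}(w_1^{-1}v_1)w_2\bigr)\bigl(w_2^{-1}v_2\bigr)$ and appeal to normality of $\Aut_c(G)$ in $\Aut(G)$ abstractly; likewise your one-line conjugation for closure under inversion replaces the paper's chain of coset equivalences. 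Your route is shorter and uses only the single hypothesis that $\Aut_c(G)\trianglelefteq\Aut(G)$, so it would work verbatim for any normal subgroup in place of $\Aut_c(G)$; the paper's more computational argument has the incidental benefit of exercising the description of central automorphisms via homomorphisms into the center, which the paper relies on again later (e.g.\ in Theorem \ref{prop:N-split}). Both are complete proofs of the statement.
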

\begin{proof}
  We note that $(\id,\id)\in\SpAutc(G)$, and this element serves as a two-sided identity.  Now suppose $(\alpha,v)\in\SpAutc(G)$.  The inverse of this element would be $(\alpha^{-1},v^{-1})$, so we must show this is an element of $\SpAutc(G)$.  Since $\Aut_c(G)$ is a normal subgroup of $\Aut(G)$, we have that
  \begin{align*}
    (\alpha,v)\in\SpAutc(G) &\iff \alpha^{-1}v\in\Aut_c(G)\\
    &\iff v\in\alpha\Aut_c(G)\\
    &\iff v\in\Aut_c(G)\alpha\\
    &\iff v\in(\alpha^{-1}\Aut_c(G))^{-1}\\
    &\iff v^{-1}\in\alpha^{-1}\Aut_c(G)\\
    &\iff (\alpha^{-1},v^{-1})\in\SpAutc(G).
  \end{align*}
  which establishes that $\SpAutc(G)$ is closed under inversion.  If also $(\beta,w)\in\SpAutc(G)$, then we may write $v(g)=\alpha(g)z(g)$ and $\beta^{-1}(w(g))=g z'(g)$ for some $z,z'\in\Hom(G,Z(G))$; we note that $z,z'$ are homomorphisms, which follows from $Z(G)$ being abelian.  We then have
  \begin{align*}
    (\alpha\circ\beta)^{-1}\circ(v\circ w)(g) &= \beta^{-1}\circ\alpha^{-1}\circ v\circ w(g)\\
    &= \beta^{-1}\alpha^{-1}\circ \alpha*z\circ w(g)\\
    &= \beta^{-1}(\alpha^{-1}(\alpha(w(g))z(w(g))))\\
    &= \beta^{-1}(w(g)\alpha^{-1}(z(w(g))))\\
    &= \beta^{-1}(w(g))\alpha^{-1}(z(w(g)))\\
    &= g z'(g)\beta^{-1}(\alpha^{-1}(z(w(g)))).
  \end{align*}
  Since $Z(G)\Char G$, we conclude that $\beta^{-1}(\alpha^{-1}(z(w(g))))\in Z(G)$ for all $g$, and therefore that $(\alpha\circ\beta,v\circ w)\in\SpAutc(G)$, as desired.
	
	Consider $H=\{(w,1)\in\SpAutc(G)\}$ and $K=\{(v,v)\in\SpAutc(G)\}$.  By definition we have $H\cong\Aut_c(G)$ and $K\cong\Aut(G)$.  Furthermore, it is equally clear that $H\trianglelefteq\SpAutc(G)$, $H\cap K=1$, and $\SpAutc(G)=HK$.  Thus $\SpAutc(G)\cong\Aut_c(G)\rtimes\Aut(G)$, as desired.

  This complete the proof.
\end{proof}
We view $\Aut(G)$ as a subgroup of $\SpAutc(G)$ via the embedding $v\mapsto (v,v)$.  Since there will be many semidirect products involved in our investigation of $\Aut(\D(G))$, and $\Aut(\D(G))$ has many natural copies of $\Aut_c(G)$, for clarity of presentation we prefer to use $\SpAutc(G)$ rather than $\Aut_c(G)\rtimes \Aut(G)$ to denote this group.  We use the latter only for specific examples.  Whenever $\Aut_c(G)=1$, such as when $Z(G)=1$, we clearly have $\SpAutc(G)\cong\Aut(G)$.

\begin{prop}\label{prop:spautc-auts}
	Let $G$ be a finite group.  For any $(w,v)\in\SpAutc(G)$,
	\[\morph{(w^{-1})^*}{0}{0}{v}\in\Aut(\D(G)).\]
	In particular, this gives an embedding $\SpAutc(G)\hookrightarrow\Aut(\D(G))$.  Explicitly, $(w,v)$ corresponds to the isomorphism $e_g\# h \mapsto e_{w(g)}\# v(h) = (w^{-1})^*(e_g)\# v(h).$
\end{prop}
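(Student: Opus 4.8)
The plan is to verify directly, against the matrix calculus set up in Theorem~\ref{ABMCor2.3} and Corollary~\ref{rishopf}, that the matrix $\psi_{w,v} := \morph{(w^{-1})^*}{0}{0}{v}$ (in which $p$ and $r$ are trivial, i.e.\ the unit--counit and the trivial bicharacter, which are the ``$0$'' entries in the matrix notation) is a Hopf algebra endomorphism of $\D(G)$ whenever $(w,v)\in\SpAutc(G)$; then to exhibit its inverse; and finally to check that $(w,v)\mapsto\psi_{w,v}$ is an injective group homomorphism whose effect on simple tensors is the stated one.

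First I would record what the compatibility conditions become when $p,r$ are trivial. Equation~\eqref{eq:upcocomm} and equation~\eqref{eq:vprel} both collapse to trivialities, since $p$ factors through the counit and the counit of $\mathbbm{k}^G$ is invariant under the conjugation action; equation~\eqref{eq:rsplit} is omitted by Corollary~\ref{rishopf}; and since $w^{-1}\in\Aut(G)$, the dual $u=(w^{-1})^*$ is a Hopf algebra isomorphism of $\mathbbm{k}^{G\text{ cop}}$, so equation~\eqref{eq:usplit} holds automatically. This leaves only equation~\eqref{eq:allrel}, which Corollary~\ref{rishopf} has already reduced to equation~\eqref{eq:vurel}. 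The crucial observation is that $u=(w^{-1})^*$ satisfies $u(e_g)=e_{w(g)}$, so that $u^*$ is identified with the group automorphism $w^{-1}$; by Lemma~\ref{lem:alphaisv} and the group-theoretic reformulation following it, $u,v$ then satisfy equation~\eqref{eq:vurel} if and only if $u^*\circ v = w^{-1}\circ v$ is a normal automorphism, i.e.\ $w^{-1}\circ v\in\Aut_c(G)$, which is precisely the defining condition for $(w,v)\in\SpAutc(G)$ in Definition~\ref{def:spautc}. Hence $\psi_{w,v}\in\End(\D(G))$.

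Next I would show $\psi_{w,v}$ is invertible with inverse $\psi_{w^{-1},v^{-1}}$. By Lemma~\ref{lem:spautc}, $\SpAutc(G)$ is closed under inversion with $(w,v)^{-1}=(w^{-1},v^{-1})$, so $\psi_{w^{-1},v^{-1}}\in\End(\D(G))$ by the previous step; then the composition formula~\eqref{eq:compose} gives
\[\psi_{w,v}\circ\psi_{w^{-1},v^{-1}} = \morph{(w^{-1})^*\circ w^*}{0}{0}{v\circ v^{-1}} = \morph{\id}{0}{0}{\id},\]
using $(w^{-1})^*\circ w^* = (w\circ w^{-1})^*=\id$, and symmetrically for the reverse composite; so $\psi_{w,v}\in\Aut(\D(G))$. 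The same formula~\eqref{eq:compose} yields $\psi_{w_2,v_2}\circ\psi_{w_1,v_1}=\morph{(w_2^{-1})^*\circ(w_1^{-1})^*}{0}{0}{v_2\circ v_1}=\psi_{w_2 w_1,\,v_2 v_1}$, since $(w_2^{-1})^*\circ(w_1^{-1})^* = (w_1^{-1}w_2^{-1})^* = ((w_2 w_1)^{-1})^*$, so the assignment is a group homomorphism $\SpAutc(G)\to\Aut(\D(G))$; it is injective because $\psi_{w,v}=\id$ forces $v=\id$ and $(w^{-1})^*=\id$, hence $w=\id$. The explicit formula $e_g\#h\mapsto e_{w(g)}\#v(h)=(w^{-1})^*(e_g)\#v(h)$ then follows by substituting the trivial $p,r$ into equation~\eqref{eq:morphpsi} and using $u(e_g)=e_{w(g)}$. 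I do not expect a genuine obstacle: the whole argument is bookkeeping within the established matrix calculus. The one point that must be handled carefully is the correct identification of $u^*$ with $w^{-1}$ (not $w$) and the precise invocation of Lemma~\ref{lem:alphaisv} together with the characterization of $\Aut_c(G)$ as the normal automorphisms, since that is exactly what makes the defining condition of $\SpAutc(G)$ match equation~\eqref{eq:vurel}.
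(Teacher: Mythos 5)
Your proof is correct and follows the same route as the paper, which simply asserts that the verification is "easily verified" and that the inverse is the matrix for $(w^{-1},v^{-1})$; you have supplied the details that the paper omits, in particular correctly isolating equation \eqref{eq:vurel} as the only nontrivial compatibility condition and matching it, via Lemma \ref{lem:alphaisv}, to the defining condition $w^{-1}\circ v\in\Aut_c(G)$ of $\SpAutc(G)$.
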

\begin{proof}
	Such a matrix is easily verified to give an element of $\End(\D(G))$.  Moreover, it also clear that its inverse is the matrix corresponding to $(w^{-1},v^{-1})=(w,v)^{-1}$, and so the matrix is in $\Aut(\D(G))$.  It is equally clear that this gives an injective group homomorphism.
\end{proof}
It should be noted that the embedded copy of $\Aut(G)$ in $\SpAutc(G)$ is mapped to the usual embedding of $\Aut(G)$ into $\Aut(\D(G))$.  So the copy of $\SpAutc(G)$ is naturally viewed as a generalization of this embedding.  Indeed, it is clear that this copy of $\SpAutc(G)$ contains both of the embeddings from Proposition \ref{prop:autc-embed}, and moreover that it is generated by $\Aut(G)$ and either one of these copies of $\Aut_c(G)$.  As usual, we identify $\SpAutc(G)$ with the image of this embedding.

If we combine Propositions \ref{prop:bch-auts} and \ref{prop:spautc-auts}, we get a subgroup containing, and generated by, both $\BCh{G}$ and $\SpAutc(G)$.
\begin{thm}\label{thm:bcg-auts}
  Let $G$ be a finite group.  
	
	Then the elements of $\Aut(\D(G))$ of the form
	\[\morph{u}{r}{0}{v}\]
	form a subgroup isomorphic to $\BCGG$, where the action of $\SpAutc(G)$ on $\BCh{G}$ is given by $r^{(w,v)} = w^* r v$. 
	
	Explicitly, $((w,v),r)\in\BCGG$ corresponds to the isomorphism $e_g\# h \mapsto r(h)e_{w^{-1}(g)}\# v(h)$.
\end{thm}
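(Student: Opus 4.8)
The plan is to verify that the displayed set of matrices is closed under composition and inversion, identify it with the underlying set $\SpAutc(G)\ltimes\BCh{G}$, and check that composition matches the semidirect product operation with the stated action. First I would observe that Propositions \ref{prop:bch-auts} and \ref{prop:spautc-auts} already furnish, for each $r\in\BCh{G}$ and each $(w,v)\in\SpAutc(G)$, the automorphisms $\ds{\morph{1}{r}{0}{1}}$ and $\ds{\morph{(w^{-1})^*}{0}{0}{v}}$ of $\D(G)$; multiplying these in the appropriate order produces $\ds{\morph{(w^{-1})^*}{(w^{-1})^*r}{0}{v}}$, which after relabeling $r$ shows every matrix of the form $\ds{\morph{(w^{-1})^*}{r}{0}{v}}$ with $r\in\BCh{G}$, $(w,v)\in\SpAutc(G)$ lies in $\Aut(\D(G))$. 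Conversely, given an arbitrary automorphism with lower-left entry $p$ trivial, Lemma \ref{lem:theta-aut1} forces $u,v$ to be isomorphisms, so $u=(w^{-1})^*$ for some $w\in\Aut(G)$; Lemma \ref{lem:alphaisv} (in its group-theoretic form, applicable since $u,v$ are now isomorphisms) shows $u^*v=w^{-1}v\in\Aut_c(G)$, i.e. $(w,v)\in\SpAutc(G)$; and $r$, being a unitary coalgebra map $\mathbbm{k}G\to(\mathbbm{k}^G)^{\mathrm{cop}}$ that by Corollary \ref{rishopf} is Hopf, is exactly an element of $\BCh{G}$. Hence the displayed set is precisely $\{\,\ds{\morph{(w^{-1})^*}{r}{0}{v}} : (w,v)\in\SpAutc(G),\ r\in\BCh{G}\,\}$, and the map $((w,v),r)\mapsto \ds{\morph{(w^{-1})^*}{r}{0}{v}}$ is a bijection onto it.

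Next I would compute the group law. Using the matrix composition formula \eqref{eq:compose} (convolution as addition, composition as multiplication), the product
\[
\morph{(w_2^{-1})^*}{r_2}{0}{v_2}\morph{(w_1^{-1})^*}{r_1}{0}{v_1}
= \morph{(w_2^{-1})^*(w_1^{-1})^*}{(w_2^{-1})^*r_1 + r_2 v_1}{0}{v_2 v_1}.
\]
Since $(w_2^{-1})^*(w_1^{-1})^* = ((w_1 w_2)^{-1})^*$ and $v_2v_1=v_2\circ v_1$, the outer coordinates multiply as in $\SpAutc(G)$ with the convention $v\mapsto(v,v)$, $w\mapsto(w,w)$; one must check $(w_1w_2, v_2v_1)\in\SpAutc(G)$, but this is exactly the closure of $\SpAutc(G)$ under the operation from Lemma \ref{lem:spautc} (after matching the left/right conventions forced by the contravariance of $(-)^*$). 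The bicharacter coordinate becomes $r_2 v_1 + (w_2^{-1})^* r_1 = r_2 v_1 + w_2^* r_1$ (writing $w_2^*$ for the dual of $w_2^{-1}$ acting on $\widehat{G}$, consistent with the notation $r^{(w,v)}=w^*rv$ in the statement), which is precisely the product rule of the semidirect product $\BCGG$ with action $r^{(w,v)}=w^*rv$, once one confirms this action is a genuine left (or right, as the convention dictates) action by automorphisms of $\BCh{G}$ — a routine check since $w\mapsto w^*$ and $v\mapsto(r\mapsto rv)$ are each anti/homomorphisms into $\Aut(\BCh{G})$ with commuting images. Finally, the explicit formula $e_g\#h\mapsto r(h)e_{w^{-1}(g)}\#v(h)$ follows by substituting $u=(w^{-1})^*$, $p$ trivial into \eqref{eq:morphpsi} and using $(w^{-1})^*(e_g)=e_{w^{-1}(g)}$.

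The main obstacle I anticipate is purely bookkeeping: reconciling the variance conventions so that the composition law on matrices matches the chosen semidirect-product convention for $\BCGG$. The dualization $u=(w^{-1})^*$ reverses the order of composition, so one has to be careful whether $\SpAutc(G)$ acts on the left or right of $\BCh{G}$ and whether the pair is $(w,v)$ or $(v,w)$; getting the formula $r^{(w,v)}=w^*rv$ to come out exactly as stated (rather than its opposite) requires tracking these reversals consistently through \eqref{eq:compose}. Everything else — membership in $\Aut(\D(G))$, bijectivity of the parametrization, and the explicit action on simple tensors — is a direct consequence of the results already established, chiefly Lemmas \ref{lem:theta-aut1}, \ref{lem:alphaisv}, \ref{lem:spautc} and Propositions \ref{prop:bch-auts}, \ref{prop:spautc-auts}.
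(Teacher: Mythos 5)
Your proposal is correct and follows essentially the same route as the paper: Lemma \ref{lem:theta-aut1} (plus Lemma \ref{lem:alphaisv}) pins down the membership, and the semidirect product structure is read off from matrix computations, the paper doing this slightly more economically by factoring $\morph{u}{r}{0}{v}$ as a unipotent times a diagonal matrix and then computing the single conjugate $u^{-1}rv=w^*rv$ rather than the full product of two general elements. Your one slip, $(w_2^{-1})^*(w_1^{-1})^*=((w_1w_2)^{-1})^*$, should read $((w_2w_1)^{-1})^*$, which in fact makes the outer coordinates compose exactly as in $\SpAutc(G)\leq\Aut(G)\times\Aut(G)$ and dissolves the bookkeeping worry you flag.
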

\begin{proof}
	Note that by Lemma \ref{lem:theta-aut1} we are guaranteed that $u,v$ are isomorphisms, and $u^*\circ v\in\Aut_c(G)$.  Given this, the rest of the proof is another straightforward verification using the matrix form for elements of $\Aut(\D(G))$, and the preceding results.  In particular, we can write
	\begin{align*}
		\left( \begin{array}{cc} u& r\\ 0&v \end{array}\right) &= \left( \begin{array}{cc} 1& rv^{-1}\\ 0& 1\end{array}\right)\left( \begin{array}{cc} u&0 \\ 0&v \end{array}\right)\\
		&= \left( \begin{array}{cc} u&0 \\ 0&v \end{array}\right)\left( \begin{array}{cc} 1& u^{-1}r\\ 0& 1\end{array}\right),
	\end{align*}
from which it follows that
\begin{align*}
	\left( \begin{array}{cc} u^{-1}& 0\\ 0& v^{-1}\end{array}\right) \left( \begin{array}{cc} 1& r\\ 0& 1\end{array}\right) \left( \begin{array}{cc} u& 0\\ 0&v \end{array}\right) &= \left( \begin{array}{cc} 1& u^{-1}rv\\ 0& 1\end{array}\right).
\end{align*}
\end{proof}
Note that $\BCGG$ is a proper subgroup whenever $Z(G)\neq 1$, since then it intersects trivially with the non-trivial subgroup $\Lambda(G)$. On the other hand, $\BCGG\cong\BCG$ and $\Lambda(G)=1$ whenever $Z(G)=1$.

%-------------------  Section: Main Result ---------------------
\section{Main Result}\label{sec:main}

We can now proceed to establish the main result of the paper, which characterizes all elements of $\Aut(\D(G))$ for $G$ purely non-abelian, and provides a decomposition into a product of subgroups.  First, we need a few preliminary results.  The first concerns the obvious subgroup of $\Aut(\D(G))$ when $G$ is a direct product.

\begin{prop}\label{prop:prodsub}
	Let $G=H\times K$, with $H,K$ finite groups.  Then $\Aut(\D(H))\times\Aut(\D(K))$ is (isomorphic to) a subgroup of $\Aut(\D(G))$.
\end{prop}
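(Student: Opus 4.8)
The plan is to reduce the statement to the standard fact that the Drinfel'd double of a direct product splits as a tensor product of Hopf algebras, $\D(H\times K)\cong\D(H)\otimes\D(K)$, together with the observation that a tensor product of Hopf algebra automorphisms is again one.

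First I would record the isomorphism $\D(H\times K)\cong\D(H)\otimes\D(K)$ of Hopf algebras. On coalgebras this is immediate from the identifications $\mathbbm{k}^{(H\times K)\,\mathrm{cop}}\cong\mathbbm{k}^{H\,\mathrm{cop}}\otimes\mathbbm{k}^{K\,\mathrm{cop}}$ and $\mathbbm{k}[H\times K]\cong\mathbbm{k}H\otimes\mathbbm{k}K$; for the algebra structure one uses that conjugation in $H\times K$ is componentwise, so that the cross relation defining the multiplication of $\D(H\times K)$ decouples into the cross relations of the two factors. Concretely, one checks that $e_{(g_1,g_2)}\#(h_1,h_2)\mapsto(e_{g_1}\#h_1)\otimes(e_{g_2}\#h_2)$ respects unit, multiplication, counit, comultiplication, and antipode. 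This is well known; see \citep{DPR}.

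Next I would invoke the elementary fact that for Hopf algebras $A,B$ over $\mathbbm{k}$ the assignment $(\phi,\psi)\mapsto\phi\otimes\psi$ is an injective group homomorphism $\Aut(A)\times\Aut(B)\to\Aut(A\otimes B)$. Indeed $\phi\otimes\psi$ is an algebra and coalgebra map because $\phi$ and $\psi$ are, it commutes with $S_{A\otimes B}=S_A\otimes S_B$, and it has inverse $\phi^{-1}\otimes\psi^{-1}$, so it lies in $\Aut(A\otimes B)$; functoriality of $\otimes$ gives $(\phi_1\otimes\psi_1)\circ(\phi_2\otimes\psi_2)=(\phi_1\circ\phi_2)\otimes(\psi_1\circ\psi_2)$; and injectivity follows by restricting a relation $\phi\otimes\psi=\id$ to $A\otimes 1_B$ and to $1_A\otimes B$. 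Composing this embedding, for $A=\D(H)$ and $B=\D(K)$, with the isomorphism of the previous paragraph yields the desired embedding $\Aut(\D(H))\times\Aut(\D(K))\hookrightarrow\Aut(\D(H\times K))=\Aut(\D(G))$, whose image is the claimed subgroup.

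No step here is genuinely difficult; the only point requiring care is the bookkeeping of the various swap maps and of the co-opposite structure in the isomorphism $\D(H\times K)\cong\D(H)\otimes\D(K)$. If one prefers to stay entirely within the matrix formalism of Theorem~\ref{ABMCor2.3}, one can instead take $(p_1,u_1,r_1,v_1)\in\Aut(\D(H))$ and $(p_2,u_2,r_2,v_2)\in\Aut(\D(K))$, set $p=p_1\otimes p_2$, $u=u_1\otimes u_2$, $r=r_1\otimes r_2$, $v=v_1\otimes v_2$ under $\mathbbm{k}^{H\times K}=\mathbbm{k}^H\otimes\mathbbm{k}^K$ and $\mathbbm{k}[H\times K]=\mathbbm{k}H\otimes\mathbbm{k}K$, and check that each compatibility relation \eqref{eq:upcocomm}--\eqref{eq:vprel} for the resulting quadruple follows factorwise from the corresponding relations for the two given quadruples (with $r_1,r_2$ taken to be group homomorphisms into $\widehat{H},\widehat{K}$ as in Corollary~\ref{rishopf}). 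This is the same argument written out, and is equally routine.
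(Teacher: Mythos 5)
Your proof is correct and is essentially the paper's argument: the paper simply tensors the quadruple components $(p_H\otimes p_K,u_H\otimes u_K,r_H\otimes r_K,v_H\otimes v_K)$, which is exactly the matrix-formalism version you describe at the end, and your primary formulation via $\D(H\times K)\cong\D(H)\otimes\D(K)$ is the same observation packaged conceptually.
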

\begin{proof}
	Given 
	\begin{align*}
		f&=(p_H,u_H,r_H,v_H)\in\Aut(\D(H)),\\
		g&=(p_K,u_K,r_K,v_K)\in\Aut(\D(K)),
	\end{align*}
	then clearly $f\otimes g=(p_H\otimes p_K, u_H\otimes u_K,r_H\otimes r_K, v_H\otimes v_K)\in\Aut(\D(G))$.  This is precisely the automorphism of $\D(G)$ which restricts to $f$ on $\D(H)$ and restricts to $g$ on $\D(K)$.
\end{proof}
By Corollary \ref{cor:abelequiv}, example \ref{ex:badabel}, and Theorem \ref{thm:pure-uv-isoms} we then have the following two corollaries.

\begin{cor}\label{cor:pure-all-isoms}
	$G$ is purely non-abelian if and only if $u,v$ are isomorphisms for all $\morphquad\in\Aut(\D(G))$.
\end{cor}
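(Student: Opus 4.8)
The plan is to prove the two implications separately and to lean on results already established. The implication ``$G$ purely non-abelian $\Rightarrow$ $u,v$ are isomorphisms for every $\morphquad\in\Aut(\D(G))$'' is exactly Theorem \ref{thm:pure-uv-isoms}, so nothing further is needed in that direction.

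For the converse I would argue by contraposition: assuming $G$ is \emph{not} purely non-abelian, I will exhibit an element of $\Aut(\D(G))$ whose $v$-component (and, symmetrically, an element whose $u$-component) fails to be an isomorphism. Write $G=A\times K$ with $A$ a non-trivial finite abelian group and $K$ a finite group. Since $A$ is abelian, Corollary \ref{cor:abelequiv} supplies $f=(p_A,u_A,r_A,v_A)\in\Aut(\D(A))$ with $v_A\equiv 1$ the trivial morphism (and, using condition (iii) of that corollary instead, an $f'\in\Aut(\D(A))$ with $u$-component trivial). Now invoke Proposition \ref{prop:prodsub}: the tensor $f\otimes\id_{\D(K)}$ lies in $\Aut(\D(G))$, and by the construction in that proposition its $v$-component is $v_A\otimes\id_K$. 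This map annihilates the non-trivial direct factor $A$, hence is not injective, hence not an isomorphism. Thus not every element of $\Aut(\D(G))$ has $u,v$ isomorphisms, which is the contrapositive we wanted; running the same argument with $f'$ in place of $f$ recovers the parenthetical symmetry for $u$, and Example \ref{ex:badabel} is the companion remark that invertibility of the whole morphism can likewise become sensitive to the choice of $r$ in this situation.

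I do not anticipate a real obstacle here: the argument is essentially bookkeeping on top of Theorem \ref{thm:pure-uv-isoms}, Corollary \ref{cor:abelequiv}, and Proposition \ref{prop:prodsub}. The only point requiring a line of care is confirming that $f\otimes\id_{\D(K)}$ genuinely has $v$-component $v_A\otimes\id_K$ and that this is non-injective precisely because $A\neq 1$ — both immediate from the explicit form of the embedding in Proposition \ref{prop:prodsub}.
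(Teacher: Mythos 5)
Your proposal is correct and follows essentially the same route as the paper, which derives this corollary from exactly the same three ingredients: Theorem \ref{thm:pure-uv-isoms} for the forward direction, and Proposition \ref{prop:prodsub} combined with Corollary \ref{cor:abelequiv} to tensor an automorphism of $\D(A)$ with trivial $v$ (resp.\ trivial $u$) against the identity on $\D(K)$ for the converse. The bookkeeping you flag — that $f\otimes\id_{\D(K)}$ has $v$-component $v_A\otimes\id_K$ with kernel $A\times 1\neq 1$ — is exactly the point the paper leaves implicit.
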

\begin{cor}\label{cor:abel-cons}
	If $G$ has an abelian direct factor, then the following all hold:
	\begin{enumerate}
		\item $\exists\morphquad\in\Aut(\D(G))$ with $\ker(v)\neq 1$.
		\item $\exists\morphquad\in\Aut(\D(G))$ with $\ker(u^*)\neq 1$.
		\item $\exists\morphquad\in\Aut(\D(G))$ and $r'\in\Hom(G,\widehat{G})$ such that \[(p,u,r',v)\not\in\Aut(\D(G)).\]
		\item $\exists\morphquad\in\Aut(\D(G))$ with any two, or all three, of the above properties.
	\end{enumerate}
\end{cor}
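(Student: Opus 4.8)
The plan is to reduce everything to the abelian case via Proposition \ref{prop:prodsub}. Suppose $G = A \times K$ with $A$ a non-trivial abelian group and $K$ a finite group (possibly trivial). By Proposition \ref{prop:prodsub}, $\Aut(\D(A)) \times \Aut(\D(K))$ embeds in $\Aut(\D(G))$, with an element $f \otimes g$ restricting to $f$ on $\D(A)$ and $g$ on $\D(K)$. So it suffices to produce, inside $\Aut(\D(A))$, automorphisms witnessing each of the four properties; tensoring with the identity automorphism $(\varepsilon_K^*, \id, 1, \id)$ of $\D(K)$ (or any fixed automorphism of $\D(K)$) then gives the desired elements of $\Aut(\D(G))$, since the kernels and the failure of invertibility are detected already on the $\D(A)$ factor.

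For the $\D(A)$ part I would invoke Corollary \ref{cor:abelequiv} and Example \ref{ex:badabel} directly. Since $A$ is abelian, Corollary \ref{cor:abelequiv} gives $\morphquad \in \Aut(\D(A))$ with $v \equiv 1$ and $u(e_g) = \delta_{1,g}\varepsilon$, and $p, r$ isomorphisms. For (i), the element $f \otimes \id$ has $v$-component $v_A \otimes \id_K$ with kernel $A \neq 1$, so $\ker(v) \supseteq A \neq 1$. For (ii), the same element has $u^*$-component whose kernel contains $A \neq 1$, by the analogous statement for $u$ in Corollary \ref{cor:abelequiv}(iii) (dualize: $u$ trivial on $\mathbbm{k}^A$ means $u^*$ kills $A$). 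For (iii), take the same $\morphquad \in \Aut(\D(A))$ and, exactly as in Example \ref{ex:badabel}, replace $r$ by a non-isomorphism $r'$ — say $r' \equiv \varepsilon$ the trivial bicharacter, which is still a valid component since $r$ need only be a morphism in $\Hom(A, \widehat{A})$; then $(p, u, r', v)$ fails to be bijective on $\D(A)$ (its image misses everything outside $\mathbbm{k}A \Img(u^*)$ appropriately, or more simply it is not invertible because the matrix determinant-type obstruction fails), hence $(p \otimes p_K, u \otimes u_K, r' \otimes r_K, v \otimes v_K) \notin \Aut(\D(G))$. For (iv), simply combine: the single automorphism $f \otimes \id$ already has both $\ker(v) \neq 1$ and $\ker(u^*) \neq 1$, and replacing its $r$-component by a non-isomorphism retains those kernels while also exhibiting property (iii), giving an element with all three properties simultaneously.

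The only real subtlety — the step I expect to be the main obstacle — is property (iii): verifying carefully that changing $r$ to $r'$ genuinely destroys invertibility rather than producing a different automorphism. For a general abelian $A$ one must check that with $v$ trivial, the endomorphism $(p, u, r', v)$ cannot be surjective unless $r'$ is an isomorphism; this is precisely the content flagged in Example \ref{ex:badabel}, and I would either cite it directly or give the short argument: with $v \equiv 1$, equation \eqref{eq:morphpsi} forces the $kA$-component of $\psi(e_g \# h)$ to come entirely from $r'(h)$, so if $r'$ is not surjective onto $\widehat{A}$ the image of $\psi$ does not span, whence $\psi \notin \Aut(\D(A))$. Everything else is bookkeeping with the tensor-product embedding of Proposition \ref{prop:prodsub} and the already-established facts, so no lengthy computation is needed.
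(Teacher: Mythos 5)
Your proposal is correct and is essentially the paper's own argument: the paper derives this corollary exactly by combining the tensor-product embedding of Proposition \ref{prop:prodsub} with the abelian-case automorphisms of Corollary \ref{cor:abelequiv} and the failure of invertibility under a change of $r$ from Example \ref{ex:badabel}. Your fleshing-out of the surjectivity obstruction for part (iii) via equation \eqref{eq:morphpsi} is a sound way to make Example \ref{ex:badabel} precise, and the single witness $f\otimes\id$ for part (iv) is exactly what is intended.
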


We previously defined the subgroup $\Lambda(G)$ in Definition \ref{def:lambdagroup}.  We will need the following lemma.

\begin{lem}\label{lem:pfactors}
	Let $\ds{\left( \begin{array}{cc} u& r\\ p& v\end{array} \right)}\in\Aut(\D(G))$. If $u$ is invertible, then we may write
	\begin{align*}
		\left( \begin{array}{cc} u& r\\ p& v\end{array} \right) &= \left( \begin{array}{cc} 1& 0\\ pu^{-1}& 1\end{array} \right) \left( \begin{array}{cc} u& r\\ 0& -pu^{-1}r+v\end{array} \right).
	\end{align*}
Here the left matrix is in $\Lambda(G)$, and the right is in $\BCGG$.

On the other hand, if $v$ is invertible, then 
\begin{align*}
		\left( \begin{array}{cc} u& r\\ p& v\end{array} \right) &= \left( \begin{array}{cc} u-rv^{-1}p& r\\ 0& v\end{array} \right) \left( \begin{array}{cc} 1&0 \\ v^{-1}p& 1\end{array} \right),
	\end{align*}
	and this is again a product of automorphisms, this time with an element of $\Lambda(G)$ on the right and an element of $\BCGG$ on the left.
\end{lem}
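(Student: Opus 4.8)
The plan is to verify both factorizations by direct matrix multiplication, and then to certify that each factor is genuinely an automorphism of $\D(G)$ by identifying it with a known subgroup. The two cases are entirely parallel under flipping (Theorem \ref{flipclosure}), so I would carry out the $u$-invertible case in detail and remark that the $v$-invertible case follows by the same computation applied to the flip, or equally well by a direct mirror-image calculation.

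First I would multiply out the right-hand side of the first identity using the matrix multiplication rule from Theorem \ref{ABMCor2.3} (convolution as addition, composition as multiplication). The $(1,1)$-entry is $1\cdot u + 0\cdot 0 = u$; the $(1,2)$-entry is $1\cdot r + 0\cdot(-pu^{-1}r+v) = r$; the $(2,1)$-entry is $pu^{-1}\cdot u + 1\cdot 0 = p$; and the $(2,2)$-entry is $pu^{-1}\cdot r + 1\cdot(-pu^{-1}r+v) = v$, where the cancellation is convolution cancellation (so "$-$" is the antipode, as the paper stipulates). This confirms the claimed equality of matrices. The analogous computation for the second identity gives $(u-rv^{-1}p)\cdot 1 + r\cdot(v^{-1}p) = u$, $(u-rv^{-1}p)\cdot 0 + r\cdot 1 = r$, $0\cdot 1 + v\cdot(v^{-1}p) = p$, and $0\cdot 0 + v\cdot 1 = v$.

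Next I would argue that the individual factors lie in the asserted subgroups. Since $\left(\begin{smallmatrix} u & r \\ p & v\end{smallmatrix}\right)\in\Aut(\D(G))$, Theorem \ref{autcents} guarantees that $A,B\leq Z(G)$ and $u$ is Hopf; under the case hypothesis $u$ is moreover invertible. The matrix $\left(\begin{smallmatrix} 1 & 0 \\ pu^{-1} & 1\end{smallmatrix}\right)$ has the form $(q,\id,1,\id)$ with $q = p\circ u^{-1}$ (as a morphism $\mathbbm{k}^{G\,\mathrm{cop}}\to\mathbbm{k}G$); since $u^{-1}$ is a Hopf isomorphism and $p$ is Hopf, $q$ is Hopf, and its associated subgroups are $u^{-1}$-translates of $A,B$, hence still central, so by the characterization in Proposition \ref{prop:lambdagroup} this factor lies in $\Lambda(G)$. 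For the right factor $\left(\begin{smallmatrix} u & r \\ 0 & -pu^{-1}r+v\end{smallmatrix}\right)$: being a product of two automorphisms (the original one and the inverse of the $\Lambda(G)$-element, which is an automorphism by Proposition \ref{prop:lambdagroup}), it is itself an automorphism; and it has lower-left entry $0$, hence lies in the subgroup of Theorem \ref{thm:bcg-auts}, which is $\BCGG$. The second factorization is handled symmetrically: the right factor $\left(\begin{smallmatrix} 1 & 0 \\ v^{-1}p & 1\end{smallmatrix}\right)$ is in $\Lambda(G)$ by the same reasoning (here using that $v$ is invertible), and the left factor is then an automorphism with vanishing lower-left entry, hence in $\BCGG$.

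The only genuinely delicate point — and the step I would flag as the main obstacle — is confirming that the off-diagonal factors really are honest elements of $\Aut(\D(G))$ (equivalently, that their entries satisfy the compatibility relations of Theorem \ref{ABMCor2.3}), rather than merely formal matrices whose product happens to equal an automorphism. The clean way around this is precisely the bootstrap used above: one exhibits the triangular factor first as a member of a subgroup already known to sit inside $\Aut(\D(G))$ (namely $\Lambda(G)$, via Proposition \ref{prop:lambdagroup} and the hypothesis that $u$, resp.\ $v$, is invertible together with the centrality of $A,B$ from Theorem \ref{autcents}), and then deduces that the remaining factor is an automorphism simply because $\Aut(\D(G))$ is a group. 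Once both factors are known to be automorphisms, membership in $\BCGG$ is immediate from the shape of the matrix and Theorem \ref{thm:bcg-auts}, and no further relation-checking is required.
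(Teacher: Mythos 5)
Your proposal is correct and follows essentially the same route as the paper: multiply out the matrices, place the unipotent lower-triangular factor in $\Lambda(G)$ via Proposition \ref{prop:lambdagroup} (using centrality of $A,B$ from Theorem \ref{autcents}), and then conclude the remaining factor is an automorphism with vanishing lower-left entry, hence in $\BCGG$. The ``bootstrap'' you flag as the delicate point is exactly how the paper handles it, the only cosmetic difference being that the paper cites Lemma \ref{lem:factorizations} to see the diagonal entries of the second factor are isomorphisms, whereas you let Theorem \ref{thm:bcg-auts} absorb that step.
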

\begin{proof}
	Simply multiply out the matrices and use Proposition \ref{prop:lambdagroup} to obtain the products.  We need only justify that the second matrices are in $\BCGG$.  To this end, note that for either such matrix we have $0$ in the bottom left corner, and thus they have $A=B=1$.  Since the matrix gives an isomorphism, by Lemma \ref{lem:factorizations} we conclude that the diagonal terms are isomorphisms.  This completes the proof.
\end{proof}
\begin{rem}
	An alternative characterization of the lemma is that any automorphism $\morphquad$ with either $u$ or $v$ an isomorphism lies in the subgroup generated by $\Lambda(G)$ and $\BCGG$.
\end{rem}
\begin{example}
	In general, it need not be true that $u,v$ are always simultaneously isomorphisms.  Indeed, for $G=\BZ_2$, denote its non-trivial bicharacter by $\sgn$, and the unique isomorphism $\widehat{G}\to G$ by $p$.  Then the following is in $\Aut(\D(G))$:
	\begin{align*}
		\morph{0}{\sgn}{p}{1}.
	\end{align*}
Indeed, $-\sgn p$ is the isomorphism $g\mapsto g^{-1}$.
\end{example}

\begin{cor}\label{cor:sp-lambda}
	All elements of $\Aut(\D(G))$ of the form $\ds{\morph{u}{0}{p}{v}}$ form the subgroup $\Lambda(G)\rtimes \SpAutc(G)$.  Here $(w,v)\in\SpAutc(G)$ acts on $\Lambda(G)$ from the left by $(w,v).(p,\id,1,\id)=(vpw^*,\id,1,\id)$.
\end{cor}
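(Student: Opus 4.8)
The plan is to show that the set $N$ of all $\morphquad\in\Aut(\D(G))$ with zero upper‑right entry is a subgroup which factors internally as $\SpAutc(G)\,\Lambda(G)$ with $\Lambda(G)$ normal and $\Lambda(G)\cap\SpAutc(G)=1$; the semidirect decomposition and the action formula then drop out of a single matrix computation. First I would check $N$ is a subgroup. By the composition formula \eqref{eq:compose}, the product of two matrices with zero upper‑right entry again has zero upper‑right entry, so $N$ is closed under composition. For inverses: if $\morphquad\in N$ has inverse $(p',u',r',v')$, then $r=0$ forces $u,v$ to be isomorphisms by Lemma \ref{lem:theta-aut1}, so reading the upper‑right entry of $\morph{u'}{r'}{p'}{v'}\morph{u}{0}{p}{v}=\morph{1}{0}{0}{1}$ gives $r'v=0$, hence $r'=0$ and the inverse lies in $N$.

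Next I would produce the factorization. Fix $\morphquad\in N$. By Lemma \ref{lem:theta-aut1} both $u,v$ are isomorphisms; by Theorem \ref{autcents} $u$ is Hopf, so $u^*\in\Aut(G)$; and since \eqref{eq:vurel} holds (Corollary \ref{rishopf}), Lemma \ref{lem:alphaisv} gives that $u^*\circ v$ is a normal automorphism, i.e.\ $u^*v\in\Aut_c(G)$. Hence $w:=(u^*)^{-1}$ satisfies $(w,v)\in\SpAutc(G)$, and under the embedding of Proposition \ref{prop:spautc-auts} this pair is exactly $\morph{u}{0}{0}{v}$. Applying the $v$‑invertible case of Lemma \ref{lem:pfactors} with $r=0$ then yields
\[\morph{u}{0}{p}{v}=\morph{u}{0}{0}{v}\,\morph{1}{0}{v^{-1}p}{1},\]
where the right‑hand factor lies in $\Lambda(G)$ by that same lemma. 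So $N\subseteq\SpAutc(G)\Lambda(G)$; the reverse inclusion is immediate since $\Lambda(G)$ and $\SpAutc(G)$ are subgroups of $N$ (their elements all have zero upper‑right entry). As $N$ is a group, $N=\SpAutc(G)\Lambda(G)=\Lambda(G)\SpAutc(G)$.

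It then remains to verify normality, triviality of the intersection, and the action. An element of $\Lambda(G)$ has $u=v=\id$, $r=0$, while an element of $\SpAutc(G)$ has $p=0$; an element of both is the identity, so $\Lambda(G)\cap\SpAutc(G)=1$. Since $N$ is generated by $\Lambda(G)$ and $\SpAutc(G)$ and $\Lambda(G)$ is abelian, for normality it suffices to conjugate a general $(p,\id,1,\id)\in\Lambda(G)$ by a general $g=\morph{(w^{-1})^*}{0}{0}{v}\in\SpAutc(G)$; multiplying the three matrices via \eqref{eq:compose} and using $(w^{-1})^*\circ w^*=\id$ gives
\[g\,\morph{1}{0}{p}{1}\,g^{-1}=\morph{1}{0}{vpw^*}{1}=(vpw^*,\id,1,\id)\in\Lambda(G).\]
Thus $\Lambda(G)\trianglelefteq N$, and together with $N=\Lambda(G)\SpAutc(G)$ and trivial intersection this gives $N\cong\Lambda(G)\rtimes\SpAutc(G)$ with $\SpAutc(G)$ acting by conjugation; the displayed identity reads exactly $(w,v).(p,\id,1,\id)=(vpw^*,\id,1,\id)$.

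The computations are routine bookkeeping inside the matrix calculus of Theorem \ref{ABMCor2.3}; I expect the only real care to be needed in the duality conventions — matching $\morph{u}{0}{0}{v}$ to the pair $((u^*)^{-1},v)$ so that $w^{-1}v=u^*v$ is precisely what Lemma \ref{lem:alphaisv} hands us, and keeping the order of composition straight in the conjugation so that the action comes out as $vpw^*$ rather than its transpose.
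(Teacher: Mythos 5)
Your proof is correct and follows essentially the same route as the paper: Lemma \ref{lem:theta-aut1} to force $u,v$ invertible, Lemma \ref{lem:pfactors} (with $r=0$) for the internal factorization into $\SpAutc(G)\Lambda(G)$, and the same conjugation computation yielding the action $vpw^*$. You simply spell out the steps the paper leaves as "easily verified" (closure of the subgroup, and that the diagonal factor lands in $\SpAutc(G)$ via Corollary \ref{rishopf} and Lemma \ref{lem:alphaisv}), and the details check out.
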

\begin{proof}
	By Lemma \ref{lem:theta-aut1}, any such morphism has $u,v$ isomorphisms.  It is easily verified that such morphisms form a subgroup, and by the Lemma the subgroup is the product of the subgroups $\Lambda(G)$ and $\SpAutc(G)$.  In particular, we have
	\begin{align*}
		\morph{u}{0}{0}{v} \morph{1}{0}{p}{1} \morph{u^{-1}}{0}{0}{v^{-1}} &= \morph{1}{0}{vpu^{-1}}{1}.
	\end{align*}
	This completes the proof.
\end{proof}

Recall that an (exact) factorization of a group $G$ is given by subgroups $H,K$ such that $G=HK$ and $H\cap K=1$ \citep{Ta81,WW80,Ito51}.  Note that neither subgroup is required to be normal.  

Our main result also now follows directly from the Lemma and Theorem \ref{thm:pure-uv-isoms}.
\begin{thm}\label{thm:main}
	Let $G$ be a purely non-abelian finite group.  Then we have exact factorizations 
	\begin{align*}
		\Aut(\D(G)) &= \Lambda(G)(\BCGG)\\&=(\BCGG)\Lambda(G).
	\end{align*}
	We therefore also have the order formula
	\[|\Aut(\D(G))| = |\End(Z(G))|\cdot |\BCh{G} | \cdot |\Aut_c(G)| \cdot |\Aut(G)|.\]
\end{thm}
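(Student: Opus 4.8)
The plan is to derive the theorem directly from Theorem~\ref{thm:pure-uv-isoms}, Lemma~\ref{lem:pfactors}, and the structural results of Section~\ref{sec:subgroups}; at this point essentially all the work has already been done. First I would invoke Theorem~\ref{thm:pure-uv-isoms}: since $G$ is purely non-abelian, every $\morphquad\in\Aut(\D(G))$ has $u$ and $v$ isomorphisms. In particular $u$ is invertible, so the first identity of Lemma~\ref{lem:pfactors} expresses such an automorphism as a product of an element of $\Lambda(G)$ followed by an element of $\BCGG$; applying instead the identity valid when $v$ is invertible yields the analogous product in the opposite order. Since $\Lambda(G)$ and $\BCGG$ are subgroups of $\Aut(\D(G))$ by Proposition~\ref{prop:lambdagroup} and Theorem~\ref{thm:bcg-auts}, and conversely every element of either product lies in $\Aut(\D(G))$, this already gives the set equalities $\Aut(\D(G))=\Lambda(G)(\BCGG)=(\BCGG)\Lambda(G)$.

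Next I would check that these factorizations are exact, i.e.\ $\Lambda(G)\cap\BCGG=1$ --- the observation already recorded after Theorem~\ref{thm:bcg-auts}. An element of $\Lambda(G)$ has the form $(p,\id,1,\id)$, whereas by Theorem~\ref{thm:bcg-auts} every element of $\BCGG$ has matrix form $\morph{u}{r}{0}{v}$, i.e.\ trivial $p$-component; an element in the intersection therefore has $u=v=\id$ and $r,p$ trivial, hence is the identity. This yields both exact factorizations of $\Aut(\D(G))$.

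The order formula then follows by counting through the exact factorization and the known descriptions of the factors: $|\Aut(\D(G))|=|\Lambda(G)|\cdot|\BCGG|$, where $|\Lambda(G)|=|\End(Z(G))|$ by Proposition~\ref{prop:lambdagroup}; and $\BCGG\cong\SpAutc(G)\ltimes\BCh{G}$ gives $|\BCGG|=|\SpAutc(G)|\cdot|\BCh{G}|$, with $|\SpAutc(G)|=|\Aut_c(G)|\cdot|\Aut(G)|$ by Lemma~\ref{lem:spautc}. Substituting yields $|\Aut(\D(G))|=|\End(Z(G))|\cdot|\BCh{G}|\cdot|\Aut_c(G)|\cdot|\Aut(G)|$.

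As for the main obstacle: there is essentially none left at this stage, since all the difficulty has been absorbed into Theorem~\ref{thm:pure-uv-isoms} (the Fitting-lemma argument forcing $\ker u^*$ and $\ker v$ into an abelian direct factor, hence trivial for purely non-abelian $G$) and into the matrix identities of Lemma~\ref{lem:pfactors}. The only points demanding care are keeping track of which factor lands on which side in the two decompositions, and confirming that the product of the two subgroups is closed --- which here is automatic because it already exhausts $\Aut(\D(G))$, so no separate normality or closure argument is needed.
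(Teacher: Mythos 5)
Your proposal is correct and follows essentially the same route as the paper, which simply states that the theorem ``follows directly from the Lemma [\ref{lem:pfactors}] and Theorem \ref{thm:pure-uv-isoms}''; you have filled in exactly the intended details (the two matrix factorizations, the trivial intersection of $\Lambda(G)$ with $\BCGG$, and the order count via Proposition \ref{prop:lambdagroup}, Theorem \ref{thm:bcg-auts}, and Lemma \ref{lem:spautc}).
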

% Here's an alternative proof, which uses the observation that motivates the next section.
%\begin{proof}
%	Let $\morphquad\in\Aut(\D(G))$.  Note that $G$ is purely non-abelian if and only if $\widehat{G}$ and $G$ have no common direct factors.  Thus by \cite{BCM},
%	\begin{align*}
%		\Aut(\Gamma) = \left\{ \left( \begin{array}{cc} \alpha & \beta \\ \gamma & \delta \end{array} \right) \ \colon \ \begin{array}{ll} \alpha\in\Aut(\widehat{G}), & \beta\in\Hom(G,\widehat{G}) \\ \gamma\in\Hom(\widehat{G},Z(G)), & \delta\in\Aut(G) \end{array} \right\},
%	\end{align*}
%and moreover any element of $\Aut(\Gamma)$ is uniquely determined by a quadruplet of such morphisms.
%	The restriction of $\morphquad$ to an element of $\Aut(\Gamma)$ is then given precisely by the matrix \[ \left( \begin{array}{cc} u|_{\widehat{G}} & r \\ p|_{\widehat{G}} & v \end{array} \right).\]  In particular, we conclude that $v\in\Aut(G)$ for every $\morphquad\in\Aut(\D(G))$.  By flipping, we thus also have that $u^*\in\Aut(G)$ for every $\morphquad\in\Aut(\D(G))$.  The result then follows from the Lemma.  In particular, note that of necessity $-pu^{-1}r+v\in\Aut(G)$ and $(u-rv^{-1}p)^*\in\Aut(G)$.  This can also be derived from a result of \citet{AY65}.  The order claim is then clear.
%\end{proof}

Recall that a group is said to be perfect if $G=G'$, the derived subgroup of $G$.
\begin{cor}\label{cor:perfect}
	Suppose $G$ is a perfect group.  Then \[\Aut(\D(G))\cong\Lambda(G)\rtimes\Aut(G).\]
	Here the action is $v.(p,\id,1,\id) = (v p v^*,\id,1,\id)$.
	
	In particular, $G$ is centerless and perfect $\iff$ $\Aut(\D(G))\cong\Aut(G)$.  This latter isomorphism therefore holds for all non-abelian simple groups.
\end{cor}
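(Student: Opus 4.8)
The plan is to specialize Theorem \ref{thm:main} to the case where $G$ is perfect, using the fact that $\Aut_c(G)=1$ for perfect groups. First I would recall why $\Aut_c(G)=1$ when $G=G'$: a central automorphism $\phi$ satisfies $\phi(g)=g z(g)$ with $z\colon G\to Z(G)$ a homomorphism (as noted in the proof of Lemma \ref{lem:spautc}, this is a homomorphism because $Z(G)$ is abelian), and any homomorphism from $G$ to an abelian group kills $G'=G$, so $z\equiv 1$ and $\phi=\id$. Consequently $\SpAutc(G)\cong\Aut(G)$, and the subgroup $\BCGG$ of Theorem \ref{thm:bcg-auts} reduces to $\Aut(G)\ltimes\BCh{G}$ with the action $r^v=v^*rv$.

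Next I would show that $\BCh{G}$ is trivial for a perfect group. A bicharacter $G\times G\to\mathbbm{k}^\times$ is, in each variable separately, a homomorphism into an abelian group, hence factors through $G/G'=1$; so $\BCh{G}=1$. Therefore the subgroup $\BCGG$ collapses all the way down to just $\Aut(G)$. Plugging this into the exact factorization $\Aut(\D(G))=\Lambda(G)(\BCGG)=(\BCGG)\Lambda(G)$ from Theorem \ref{thm:main} gives $\Aut(\D(G))=\Lambda(G)\Aut(G)$ with $\Lambda(G)\cap\Aut(G)=1$. To upgrade this exact factorization to a semidirect product, I would check that $\Lambda(G)$ is normal: conjugating $(p,\id,1,\id)$ by $\left(\begin{smallmatrix}(v^{-1})^*&0\\0&v\end{smallmatrix}\right)$ — using matrix multiplication as in Corollary \ref{cor:sp-lambda} with $w=v$, $u=(v^{-1})^*$ so that $u^{-1}=v^*$ — yields $\left(\begin{smallmatrix}1&0\\vpv^*&1\end{smallmatrix}\right)$, which is again in $\Lambda(G)$ by Proposition \ref{prop:lambdagroup} (since $A,B\leq Z(G)$ is preserved). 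This shows $\Aut(\D(G))\cong\Lambda(G)\rtimes\Aut(G)$ with the stated action $v.(p,\id,1,\id)=(vpv^*,\id,1,\id)$.

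For the final sentence: if $G$ is centerless and perfect, then $Z(G)=1$ forces $\Lambda(G)\cong\End(Z(G))=1$ by Proposition \ref{prop:lambdagroup}, so the semidirect product degenerates to $\Aut(\D(G))\cong\Aut(G)$. Conversely, if $\Aut(\D(G))\cong\Aut(G)$, I would compare orders via the formula in Theorem \ref{thm:main}: for $G$ purely non-abelian (which a centerless group certainly is, but one must first argue $G$ is purely non-abelian here — this follows since a group with $\Aut(\D(G))\cong\Aut(G)$ finite cannot have an abelian direct factor, as such factors force $\BCh{G}\neq 1$ and hence extra automorphisms by Corollary \ref{cor:abel-cons}) we get $|\End(Z(G))|\cdot|\BCh{G}|\cdot|\Aut_c(G)|=1$; in particular $\BCh{G}=1$ forces $G/G'$ trivial, i.e. $G$ perfect, and $\End(Z(G))=1$ forces $Z(G)=1$. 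The main obstacle I anticipate is the converse direction, specifically handling the "purely non-abelian" hypothesis cleanly so that Theorem \ref{thm:main}'s order formula is available; once that is in place the order comparison is immediate. Since non-abelian simple groups are perfect and centerless, the last claim follows.
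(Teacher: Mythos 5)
Your proposal is correct and follows essentially the same route as the paper: specialize the main factorization using $\BCh{G}=1$ and $\Aut_c(G)=1$ for perfect $G$ (the paper cites the fact that central automorphisms fix $G'$ elementwise, where you use the $\Hom(G,Z(G))$ parametrization — both are fine), and obtain the semidirect product from the normality computation already recorded in Corollary \ref{cor:sp-lambda}. You are in fact more careful than the paper about the converse of the final equivalence, which the paper leaves implicit; your observation that $\BCh{G}=1$ is forced (since $\Aut(G)$ and $\BCh{G}$ embed with trivial intersection) and hence $G$ is perfect, hence purely non-abelian, is the clean way to make the order formula available.
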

\begin{proof}
	A perfect group is clearly purely non-abelian.  Morever, since $G=G'$ we have $\BCh{G}=1$.  Therefore $\Aut(\D(G))\cong\Lambda(G)\rtimes\SpAutc(G)$ by Corollary \ref{cor:sp-lambda}.  It is well known that, for any group $G$, the members of $\Aut_c(G)$ fix $G'$ elementwise.  Thus for $G$ perfect $\Aut_c(G)=1$ and $\SpAutc(G)\cong\Aut(G)$.  Therefore $\Aut(\D(G))\cong\Lambda(G)\rtimes\Aut(G)$ as claimed.
\end{proof}

\begin{cor}\label{cor:centerless}
	If $Z(G)=1$, then $\Aut(\D(G))\cong\BCG$.
\end{cor}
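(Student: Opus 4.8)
The plan is to obtain this as an immediate degeneration of Theorem~\ref{thm:main}, using that every piece of the factorization other than $\BCh{G}$ and $\Aut(G)$ collapses when the center is trivial. First I would observe that a centerless group is automatically purely non-abelian: an abelian direct factor $A$ of $G$ satisfies $A\le Z(G)=1$, so $G$ has no non-trivial abelian direct factors and Theorem~\ref{thm:main} applies, giving the exact factorization $\Aut(\D(G))=\Lambda(G)(\BCGG)$.

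Next I would kill the two ``extra'' pieces. By Proposition~\ref{prop:lambdagroup}, $\Lambda(G)\cong\End(Z(G))$, which is trivial when $Z(G)=1$; so the factorization collapses to $\Aut(\D(G))=\BCGG$. Likewise $\Aut_c(G)=1$: by Definition~\ref{def:group-centaut} any $\phi\in\Aut_c(G)$ obeys $\phi(g)g^{-1}\in Z(G)=1$, forcing $\phi=\id$. Hence, from Definition~\ref{def:spautc} and Lemma~\ref{lem:spautc}, $\SpAutc(G)=\{(v,v)\colon v\in\Aut(G)\}\cong\Aut(G)$, and the action $r^{(w,v)}=w^*rv$ from Theorem~\ref{thm:bcg-auts} specializes to $r^v=v^*rv$ because now $w=v$.

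Combining these identifications gives $\BCGG=\SpAutc(G)\ltimes\BCh{G}\cong\Aut(G)\ltimes\BCh{G}=\BCG$ with action $r^v=v^*rv$, which is exactly the claim and also recovers the ``special case'' clause of the Theorem in the Introduction. A slightly more self-contained route avoids Theorem~\ref{thm:main}: Lemma~\ref{lem:theta-aut1} says that for $Z(G)=1$ every $\morphquad\in\Aut(\D(G))$ has $p$ trivial, so each automorphism has the matrix shape $\morph{u}{r}{0}{v}$; Theorem~\ref{thm:bcg-auts} then identifies the totality of such elements with $\BCGG$, and the computation $\Aut_c(G)=1$ finishes it the same way. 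Either way there is no genuine obstacle---the real work is already done in Theorems~\ref{thm:main} and~\ref{thm:bcg-auts} and Lemma~\ref{lem:theta-aut1}---so the only care needed is in tracking the semidirect-product action and confirming that the several embedded copies of $\Aut(G)$ and of $\Aut_c(G)$ in $\Aut(\D(G))$ coalesce correctly.
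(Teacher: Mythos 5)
Your proposal is correct and follows essentially the same route the paper intends: the corollary is an immediate specialization of Theorem~\ref{thm:main} using that a centerless group is purely non-abelian, $\Lambda(G)\cong\End(Z(G))=1$, and $\Aut_c(G)=1$ so that $\SpAutc(G)\cong\Aut(G)$ and $\BCGG\cong\BCG$ (as the paper itself notes right after Theorem~\ref{thm:bcg-auts}). Your alternative route via Lemma~\ref{lem:theta-aut1} (every automorphism has $p$ trivial when $Z(G)=1$, hence lies in the subgroup of Theorem~\ref{thm:bcg-auts}) is likewise valid and is essentially the observation already recorded in that lemma.
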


\begin{example}
	Consider $S_n$, the symmetric group on $n\geq 3$ symbols.  Then \[\Aut(\D(S_n))\cong\BZ_2\times \Aut(S_n).\]
\end{example}
\begin{example}
	For $A_n$, the alternating group on $n\geq 5$ symbols, \[\Aut(\D(A_n))\cong\Aut(S_n).\]
\end{example}
\begin{example}
  For $A_4$ we have $\BCh{A_4}\cong\BZ_3$ and thus \[\Aut(\D(A_4))\cong S_4\ltimes\BZ_3,\] where transpositions act by inversion.
\end{example}
\begin{example}\label{ex:dihedrals1}
  Let $D_{2n}$ be the dihedral group of order $2n$.  If $n$ is odd, then $\Aut(\D(D_{2n}))\cong \BZ_2\times \operatorname{Hol}(\BZ_n)$, where $\Aut(D_{2n})\cong\operatorname{Hol}(\BZ_n)$ is the holomorph of $\BZ_n$.  In this case, $|\Aut(\D(D_{2n}))|=2n\phi(n)$, where $\phi$ is the Euler totient function.  When $n$ is even the center is non-trivial, and the description becomes more complicated.  Indeed, when $n$ is even and not divisible by 4, $D_{2n}\cong\BZ_2\times D_{n}$, and so $D_{2n}$ is not purely non-abelian.  We leave further details on this to the next section.
\end{example}
\begin{example}
	For $q\geq 4$ a prime power, $G=\operatorname{SL}(2,q)$ is quasisimple, so in particular perfect, and has $\Aut(G)\cong\operatorname{P\Gamma L}(2,q)$, a projective semilinear group.  Therefore \[\Aut(\D(\operatorname{SL}(2,q)))\cong \End(\BZ_q)\rtimes \operatorname{P\Gamma L}(2,q).\]  When $q$ itself is prime, note that $\End(\BZ_q)\cong \BZ_q$ and $\operatorname{P\Gamma L}(2,q)=\operatorname{PSL}(2,q)$.
\end{example}

We can summarize our results on purely non-abelian groups with the following.
\begin{thm}\label{thm:pure-equiv}
Let $G$ be a finite group.  Then the following are equivalent.
\begin{enumerate}
	\item $G$ is purely non-abelian.
	\item Every $\morphquad\in\Aut(\D(G))$ has $u,v$ isomorphisms.
	\item For any $r'\in\Hom(G,\widehat{G})$ and $\morphquad\in\Aut(\D(G))$, then also $(p,u,r',v)\in\Aut(\D(G))$.
	\item $\Aut(\D(G))=\Lambda(G)(\BCGG)$ is an exact factorization.
\end{enumerate}
\end{thm}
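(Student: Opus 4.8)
The plan is to prove Theorem \ref{thm:pure-equiv} by establishing a cycle of implications $(i)\Rightarrow(ii)\Rightarrow(iii)\Rightarrow(iv)\Rightarrow(i)$, drawing almost entirely on results already assembled in the paper. Most of the work has in fact been done; the theorem is largely a repackaging of Corollary \ref{cor:pure-all-isoms}, Theorem \ref{thm:main}, and Corollary \ref{cor:abel-cons}.

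\textbf{The implications $(i)\Leftrightarrow(ii)$ and $(i)\Rightarrow(iv)$.} The equivalence of $(i)$ and $(ii)$ is exactly Corollary \ref{cor:pure-all-isoms}, so nothing new is needed there. For $(i)\Rightarrow(iv)$, I would simply invoke Theorem \ref{thm:main}, which asserts precisely that $\Aut(\D(G))=\Lambda(G)(\BCGG)$ is an exact factorization whenever $G$ is purely non-abelian. Thus two of the four edges of the cycle are immediate citations.

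\textbf{The implication $(ii)\Rightarrow(iii)$.} Assume every $\morphquad\in\Aut(\D(G))$ has $u,v$ isomorphisms, and fix such a $\psi=\morphquad$ together with an arbitrary $r'\in\Hom(G,\widehat{G})$. I want to show $\psi'=(p,u,r',v)\in\Aut(\D(G))$. First check $\psi'\in\End(\D(G))$: the compatibility conditions \eqref{eq:upcocomm}, \eqref{eq:usplit}, \eqref{eq:vurel}, \eqref{eq:vprel} involve only $u,p,v$ (recall from Corollary \ref{rishopf} that \eqref{eq:rsplit} is automatic once $r'$ is a group homomorphism $G\to\widehat G$, and there is no remaining relation constraining $r$), so $\psi'$ satisfies them because $\psi$ does. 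For invertibility, use the matrix factorization of Lemma \ref{lem:pfactors}: since $v$ is an isomorphism, $\psi' = \bigl(\begin{smallmatrix} u-r'v^{-1}p & r' \\ 0 & v\end{smallmatrix}\bigr)\bigl(\begin{smallmatrix} 1 & 0 \\ v^{-1}p & 1\end{smallmatrix}\bigr)$, the right factor lies in $\Lambda(G)$ (as $A,B\leq Z(G)$ by Theorem \ref{autcents}, $p$ being unchanged from the automorphism $\psi$), and the left factor has zero lower-left entry and an invertible diagonal entry $v$, hence has trivial $A=B=1$ and lies in $\BCGG$ by the argument in the proof of Lemma \ref{lem:pfactors}; both factors are automorphisms, so $\psi'$ is. One small point to verify carefully is that $u-r'v^{-1}p$ is indeed still an isomorphism (equivalently that the left factor really is in $\BCGG$): this follows because the left factor is a product of automorphisms, $\psi'$ being shown invertible as soon as we know the right factor is, but to avoid circularity I would instead argue directly that any endomorphism with zero lower-left corner and invertible $v$ is invertible via Lemma \ref{lem:factorizations}(iv) and the order trick of Lemma \ref{lem:theta-aut1}. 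This is the one place requiring a little care.

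\textbf{The implications $(iii)\Rightarrow(i)$ and $(iv)\Rightarrow(i)$.} Both are handled by contraposition using Corollary \ref{cor:abel-cons}: if $G$ has an abelian direct factor, then part $(iii)$ of that corollary produces $\morphquad\in\Aut(\D(G))$ and $r'\in\Hom(G,\widehat G)$ with $(p,u,r',v)\notin\Aut(\D(G))$, contradicting our $(iii)$; and part $(i)$ or $(ii)$ of that corollary produces an automorphism with $\ker(v)\neq1$ or $\ker(u^*)\neq1$, so $\Aut(\D(G))$ contains an element outside $\Lambda(G)(\BCGG)$ — since every element of $\Lambda(G)$ has $u=v=\id$ and every element of $\BCGG$ has $u,v$ isomorphisms, any product from these two subgroups has $u,v$ isomorphisms, contradicting $(iv)$. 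The main (and only mild) obstacle is the verification in $(ii)\Rightarrow(iii)$ that replacing $r$ by an arbitrary $r'$ preserves invertibility; everything else is bookkeeping with previously established results. I would close by remarking that the theorem could equivalently be stated with the factorization order reversed, per Theorem \ref{thm:main}.
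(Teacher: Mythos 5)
Your overall architecture is sound and matches how the paper intends the theorem to be read off from Corollary \ref{cor:pure-all-isoms}, Theorem \ref{thm:main}, and Corollary \ref{cor:abel-cons}, but the one step you flag as ``requiring a little care'' is a genuine gap, and the repair you propose does not work. You need the left factor $\morph{u-r'v^{-1}p}{r'}{0}{v}$ to be an automorphism, and you suggest proving that \emph{any} endomorphism with zero lower-left corner and invertible $v$ is invertible, via Lemma \ref{lem:factorizations}(iv) and the order trick of Lemma \ref{lem:theta-aut1}. That statement is false: $\morph{0}{r}{0}{\id}$ with $r$ the nontrivial bicharacter is an endomorphism of $\D(\BZ_2)$ with zero lower-left corner and $v=\id$, yet it is not injective since $\mathbbm{k}A\Img(u^*)\neq\mathbbm{k}G$. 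Moreover the order trick is unavailable here, since it presupposes the morphism already lies in $\Aut(\D(G))$ (finite order is what forces $u^n=\id$). What actually has to be shown is that $u-r'v^{-1}p$ is an isomorphism, and this genuinely uses pure non-abelianness: dualizing and restricting to group-likes, $(u-r'v^{-1}p)^*$ becomes $u^*\circ(\id * w)$ for some $w\in\Hom(G,Z(G))$ killing $G'$ (the composite $p^*(v^{-1})^*(r')^*$ lands in $\mathbbm{k}A\subseteq\mathbbm{k}Z(G)$), and for purely non-abelian $G$ -- which (ii) supplies via Corollary \ref{cor:pure-all-isoms} -- the normal endomorphism $\id * w$ lies in $\Aut_c(G)$, either by Fitting's Lemma applied to $w$ exactly as in the proof of Theorem \ref{thm:pure-uv-isoms}, or by the Adney--Yen bijection invoked in the proof of Theorem \ref{prop:N-split}. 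A slightly cleaner route to the same point: with $\psi^{-1}=(p_0,u_0,r_0,v_0)$, one computes $\psi^{-1}\circ\psi'=\morph{1}{u_0(r'-r)}{0}{\id+p_0(r'-r)}$, whose lower-right entry is again of the form $\id * z$ with $z\in\Hom(G,Z(G))$, so the same fact finishes the argument.

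There is also a smaller slip in your $(iv)\Rightarrow(i)$: you assert that every product from $\Lambda(G)$ and $\BCGG$ has both $u$ and $v$ isomorphisms. The $u$-component of $\morph{1}{0}{p_0}{1}\morph{u_1}{r_1}{0}{v_1}$ is indeed $u_1$, an isomorphism, but the $v$-component is $p_0r_1+v_1$, which need not be one: for $G=\BZ_2$ with $p_0$ the isomorphism $\widehat{G}\to G$ and $r_1=\sgn$ one gets $p_0r_1+v_1=\id*\id$, the trivial map. So the contrapositive should be run with Corollary \ref{cor:abel-cons}(ii), producing an automorphism with $\ker(u^*)\neq 1$ and hence $u$ not an isomorphism, rather than with part (i) of that corollary. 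With these two repairs your proof is complete and agrees in substance with the paper's intended assembly of its earlier results.
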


Our next goal is to find a number of normal subgroups of $\Aut(\D(G))$ and determine when they have complements.

%-------------------  Section: Restriction to group-likes ---------------------
\section{Restriction to group-likes}\label{sec:restriction}

Let $\Gamma=\Gamma_G=\widehat{G}\times G$ be the group-like elements of $\D(G)$.  Since any morphism of Hopf algebras must send group-like elements to group-like elements, there is a natural restriction of $\Hom(\D(G),\D(H))$ to the group homomorphisms $\Hom(\Gamma_G,\Gamma_H)$.  In particular, this gives a group homomorphism $\Aut(\D(G))\to \Aut(\Gamma)$. This simple observation was used earlier in the proof of Theorem \ref{thm:invariance}.

Here and for the remainder of the paper, let $N$ be the kernel of the restriction $\Aut(\D(G))\to\Aut(\Gamma)$, and identify $\Aut(\D(G))/N$ with the image. In particular, $N$ is the normal subgroup of all automorphisms of $\D(G)$ which fix every element of $\Gamma$.  When $N=1$ this means that $\Aut(\D(G))$ is isomorphic to a subgroup of $\Aut(\Gamma)$.  In this section we describe $N$ explicitly, and determine when $N=1$ holds.  In the next section we will determine when $N$ and certain other subgroups have a complement.

It is helpful to know exactly what the restriction map looks like.

\begin{lem}\label{lem:restriction}
	Let $\psi=\morphquad\in\Hom(\D(G),\D(H))$. Then for $\chi\in\widehat{G}$ and $g\in G$ we have
	\begin{align}\label{eq:res}
		\psi(\chi\# g) = r(g)u(\chi) \# p(\chi) v(g).
	\end{align}
	Thus the restriction of $\psi$ gives the group homomorphism
	\begin{align}\label{eq:res-group}
		(\chi,g) \mapsto (r(g)u(\chi), p(\chi)v(g)),
	\end{align}
	and this will be an isomorphism whenever $\psi$ is.
\end{lem}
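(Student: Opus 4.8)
The plan is to start from the formula \eqref{eq:morphpsi} for $\psi$ and simply evaluate it on a group-like element. Recall $\Gamma_G=\widehat G\times G$ sits inside $\D(G)$ as the elements $\chi\# g$ with $\chi\in\widehat G$ a linear character (equivalently a group-like of $\du G$) and $g\in G$. First I would observe that since $\chi$ is group-like in $(\du G)^{\mathrm{cop}}$, we have $\Delta(\chi)=\chi\otimes\chi$, so $a_{(1)}\otimes a_{(2)}=\chi\otimes\chi$ when we plug $a=\chi$ into \eqref{eq:morphpsi}. This collapses the Sweedler sum and gives
\begin{align*}
	\psi(\chi\# g) = u(\chi)r(g)\# p(\chi)v(g).
\end{align*}
The only subtlety is the ordering of the two factors in each tensor slot: $u(\chi)$ and $r(g)$ both lie in $\du H$, and $r(g)\in\widehat H$ by Corollary \ref{rishopf}, so $r(g)$ is central in $\du H$ (indeed all of $\du H$ is commutative), hence $u(\chi)r(g)=r(g)u(\chi)$, matching \eqref{eq:res}. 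Likewise $p(\chi)v(g)$ is just the product in $kH$ with no reordering needed; this is exactly the second coordinate in \eqref{eq:res-group}.

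Next I would record why $r(g)u(\chi)\in\widehat H$ and $p(\chi)v(g)\in H$, i.e. why the image really lands in $\Gamma_H$: $r(g)\in\widehat H$ and $u(\chi)$ is group-like in $(\du H)^{\mathrm{cop}}$ since $u$ is a coalgebra morphism sending the group-like $\chi$ to a group-like, so their product is a group-like of $\du H$, i.e. an element of $\widehat H$; similarly $p(\chi)\in H$ (as $p$ is a Hopf morphism $\mathbbm k^{G}\to kH$, it sends the group-like $\chi$ to a group-like of $kH$) and $v(g)\in H$, so $p(\chi)v(g)\in H$. Hence the assignment \eqref{eq:res-group} is a well-defined map $\Gamma_G\to\Gamma_H$, and it is the restriction of the Hopf morphism $\psi$, so it is automatically a group homomorphism; one could also check multiplicativity by hand from \eqref{eq:res-group} using that $u,p,r,v$ and the conjugation actions interact via the compatibility relations, but invoking that $\psi$ is an algebra map on the subalgebra (group) $\Gamma_G$ is cleaner.

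For the final clause, if $\psi$ is an isomorphism of Hopf algebras then it restricts to a bijection on group-like elements (a Hopf algebra map carries group-likes to group-likes, and its inverse does too, so the restriction is a bijection $\Gamma_G\to\Gamma_H$), hence the group homomorphism \eqref{eq:res-group} is an isomorphism. I do not anticipate a real obstacle here: the whole lemma is essentially unwinding \eqref{eq:morphpsi} on group-likes, and the only thing requiring a word of care is the commutativity used to rewrite $u(\chi)r(g)$ as $r(g)u(\chi)$ so that the formula matches the stated form \eqref{eq:res}; everything else is immediate from the definitions and from Corollary \ref{rishopf}.
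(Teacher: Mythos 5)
Your proof is correct and takes exactly the route the paper intends: the lemma is an immediate unwinding of equation \eqref{eq:morphpsi} on group-likes (the paper gives no separate proof, only the remark that $u$ restricts to a homomorphism $\widehat{G}\to\widehat{H}$), and your observations about the collapse of the Sweedler sum, the commutativity of $\mathbbm{k}^H$ justifying the reordering, and the restriction of an isomorphism to a bijection on group-likes are all accurate.
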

Note that since $u$ is a unitary coalgebra map, it restricts to a group homomorphism $\widehat{G}\to\widehat{H}$.

In general, however, $\psi$ need not be an isomorphism for its restriction to be an isomorphism.  
\begin{example}
	If $G$ is a perfect group, then $\ds{\morph{0}{0}{0}{1}}\in\End(\D(G))$ is clearly not an isomorphism, but restricts to the identity on $\Gamma_G=G$.
\end{example}

We now proceed to determine the membership and structure of $N$.  We have previously defined $\Lambda(G)$ in Definition \ref{def:lambdagroup}.
\begin{df}\label{def:lambdac}
For $G$ a finite group, define
\begin{align*}
	\Lambda_c(G)=\{(p,u,1,\id)\in\Aut(\D(G)) \}.
\end{align*}
\end{df}
Clearly $\Lambda(G)\subseteq\Lambda_c(G)$.  For $(p,u,1,\id)\in\Lambda_c(G)$ we have $u^*=u^*\circ\id$ is a normal group endomorphism.  In particular, if $u^*\in\Aut(G)$, then in fact $u^*\in\Aut_c(G)$.  Furthermore, for $u^*\in\Aut_c(G)$,
\[(p,u,1,\id)=(1,u,1,\id)\circ(p,\id,1,\id)\]
is a composition of automorphisms by Propositions \ref{prop:lambdagroup} and \ref{prop:spautc-auts}, and so is itself an automorphism. It may not be immediately obvious that $u$ must be an isomorphism for any element of $\Lambda_c(G)$, however.

\begin{lem}
Every $(p,u,1,\id)\in\Lambda_c(G)$ has $u^*\in\Aut_c(G)$.  Moreover, $\Lambda_c(G)$ is the subgroup of $\Aut(\D(G))$ fixing $1\times G\subseteq \Gamma$ element-wise.
\end{lem}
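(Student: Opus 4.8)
The plan is to prove the two assertions in turn: first that every $(p,u,1,\id)\in\Lambda_c(G)$ forces $u^*\in\Aut_c(G)$, and second that $\Lambda_c(G)$ is exactly the pointwise stabilizer of $1\times G$ in $\Aut(\D(G))$.

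For the first assertion, let $\psi=(p,u,1,\id)\in\Lambda_c(G)$. Since $\psi$ is an automorphism, Theorem \ref{autcents} guarantees that $u$ is a morphism of Hopf algebras, so $u^*$ is identified with a group endomorphism of $G$, and Theorem \ref{thm:pure-uv-isoms} (applied to $u^*$) shows $\ker(u^*)$ lies in an abelian direct factor of $G$; but we want the stronger statement that $u^*$ is actually an automorphism. The cleanest route is to mimic the argument of Lemma \ref{lem:theta-aut1}: since $v=\id$, computing powers of the matrix $\morph{u}{0}{p}{1}$ gives $\morph{u^n}{0}{*}{1}$, and because $\psi$ has finite order in $\Aut(\D(G))$ we get $u^n=\id$ for some $n$, hence $u$ (equivalently $u^*$) is an isomorphism. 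With $u^*\in\Aut(G)$ in hand, Lemma \ref{lem:alphaisv} applied with $v=\id$ says that $u^*\circ\id=u^*$ is a normal automorphism, and by the discussion following Definition \ref{def:group-centaut} a normal automorphism of $G$ is precisely an element of $\Aut_c(G)$; thus $u^*\in\Aut_c(G)$.

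For the second assertion, one inclusion is immediate: by Lemma \ref{lem:restriction}, the element $(p,u,1,\id)$ restricts to the group homomorphism $(\chi,g)\mapsto(r(g)u(\chi),p(\chi)v(g)) = (u(\chi), p(\chi)g)$ since $r$ is trivial and $v=\id$; in particular, on $1\times G$ (where $\chi=\varepsilon$, so $u(\chi)=\varepsilon$ and $p(\chi)=1$) this sends $(\varepsilon,g)\mapsto(\varepsilon,g)$, so $\Lambda_c(G)$ fixes $1\times G$ pointwise. Conversely, suppose $\psi=\morphquad\in\Aut(\D(G))$ fixes every element of $1\times G$. By Lemma \ref{lem:restriction} the restriction sends $(\varepsilon,g)\mapsto(r(g),v(g))$, so fixing $1\times G$ forces $r(g)=\varepsilon$ for all $g$, i.e.\ $r$ is trivial, and $v(g)=g$ for all $g$, i.e.\ $v=\id$. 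Hence $\psi=(p,u,1,\id)\in\Lambda_c(G)$.

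The main obstacle is the first half, and specifically ruling out that $u^*$ has a nontrivial kernel inside an abelian direct factor: a priori $\Lambda_c(G)$ could contain elements with $u$ non-injective even though $v=\id$. The finite-order argument above sidesteps this cleanly, but one should double-check that the matrix power computation genuinely isolates $u^n$ in the upper-left slot independent of the lower-left entry — this is the routine verification that composition of these quadruples corresponds to matrix multiplication with convolution as addition, already established in the discussion around \eqref{matrixform}, so there is no real difficulty once that bookkeeping is invoked.
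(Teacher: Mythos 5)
Your proof is correct and follows essentially the same route as the paper: the finite-order matrix-power computation to force $u^n=\id$, the identification of $u^*=u^*\circ\id$ as a normal endomorphism via Lemma \ref{lem:alphaisv} (which the paper records in the discussion immediately preceding the lemma), and the restriction formula \eqref{eq:res} to characterize the pointwise stabilizer of $1\times G$. The only cosmetic difference is that the paper computes the product of two general elements of $\Lambda_c(G)$ rather than powers of a single one, which simultaneously yields the subgroup closure.
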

\begin{proof}
	Let $(p_1,u_1,1,\id),(p_2,u_2,1,\id)\in\Lambda_c(G)$.  Then
	\begin{align*}
		\left( \begin{array}{cc} u_1& 0 \\ p_1 & 1 \end{array} \right) \left( \begin{array}{cc} u_2& 0 \\ p_2 & 1 \end{array} \right) &= \left( \begin{array}{cc} u_1 u_2 & 0 \\ p_1 u_2+p_2 & 1 \end{array} \right).
	\end{align*}
	Since $(p_1,u_1,1,\id)$ is invertible and has finite order, we see that we must have $u_1^n=\id$ for some $n\in\BN$.  Thus $u$ must be invertible.  This proves the first claim, and the matrix product above then shows that $\Lambda_c(G)$ is a subgroup.
	
	Now, on the other hand, let $g\in G$ and $\psi=\morphquad\in\Aut(\D(G))$.  Then by \eqref{eq:res},
	\[ \psi(\varepsilon\# g) = r(g)\# v(g).\]
	Thus $\psi$ fixes all of $G$ if and only if $r$ is trivial and $v$ is the identity.  So by definition, if and only if $\psi\in \Lambda_c(G)$.  This completes the proof.	
\end{proof}
\begin{cor}
	$\Lambda_c(G)\cong \Aut_c(G)\ltimes \Lambda(G)$ where $\Aut_c(G)$ acts on $\Lambda(G)$ on the right by $(p,\id,1,\id)^w =(p(w^{-1})^*,\id,1,\id)$. 
\end{cor}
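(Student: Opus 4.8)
The plan is to exhibit $\Lambda(G)$ as a normal subgroup of $\Lambda_c(G)$ with a complement isomorphic to $\Aut_c(G)$, and then identify the resulting conjugation action. First I would take the second embedding of $\Aut_c(G)$ from Proposition \ref{prop:autc-embed}, namely $w\mapsto \morph{(w^{-1})^*}{0}{0}{1}$, and note that its image lands in $\Lambda_c(G)$ (these are morphisms of the form $(p,u,1,\id)$ with $p$ trivial). Call this image $H$. By Proposition \ref{prop:lambdagroup}, $\Lambda(G)$ is an abelian subgroup of $\Aut(\D(G))$, and it is visibly contained in $\Lambda_c(G)$ since all of its elements have $u=\id$ (so $u^*=\id\in\Aut_c(G)$). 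I would verify $H\cap\Lambda(G)=1$: an element in the intersection has $p$ trivial (from $H$) and $u$ trivial (from $\Lambda(G)$), hence is the identity. Then $H\Lambda(G)=\Lambda_c(G)$: given $(p,u,1,\id)\in\Lambda_c(G)$, the preceding lemma gives $u^*\in\Aut_c(G)$, and using the factorization already noted in the text, $(p,u,1,\id)=(1,u,1,\id)\circ(p,\id,1,\id)$ with the first factor in $H$ and the second in $\Lambda(G)$.

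Next I would show $\Lambda(G)\trianglelefteq\Lambda_c(G)$. The cleanest way is to compute the conjugate of a generic element $(p,\id,1,\id)\in\Lambda(G)$ by a generic element $(q,u,1,\id)\in\Lambda_c(G)$ using the matrix form \eqref{matrixform} and the composition rule \eqref{eq:compose}, together with the inverse formula $(q,u,1,\id)^{-1}=\morph{u^{-1}}{0}{-pu^{-1}}{1}$ obtained from the lemma's matrix product. Multiplying out
\[
\morph{u^{-1}}{0}{-qu^{-1}}{1}\morph{1}{0}{p}{1}\morph{u}{0}{q}{1}
= \morph{1}{0}{pu}{1},
\]
where I use that in $\Lambda(G)$ the lower-left entry convolution-commutes with everything (Proposition \ref{prop:lambdagroup}) so the $q$-terms cancel. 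Since $p\mapsto pu$ (i.e. $p\mapsto p\circ u$, thinking of $p$ as determined by the isomorphism $\widehat A\cong B$ and precomposing with $u$ restricted appropriately) again has central $A,B$, this conjugate lies in $\Lambda(G)$, establishing normality and simultaneously computing the action. Writing the element of $H$ as the image of $w\in\Aut_c(G)$, so that $u=(w^{-1})^*$, the action of $w$ on $(p,\id,1,\id)\in\Lambda(G)$ is $(p,\id,1,\id)^w=(p\circ(w^{-1})^*,\id,1,\id)=(p(w^{-1})^*,\id,1,\id)$, exactly as claimed (a right action, consistent with the fact that $\Lambda(G)$ is the normal factor and $\Aut_c(G)$ the complement, so $\Lambda_c(G)\cong\Aut_c(G)\ltimes\Lambda(G)$).

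The main obstacle is bookkeeping rather than conceptual: getting the matrix arithmetic right, in particular tracking that "addition" is convolution and "subtraction" is the antipode (so the inverse of $\morph{u}{0}{q}{1}$ really is $\morph{u^{-1}}{0}{-qu^{-1}}{1}$, which needs $-q\ast$ applied after $u^{-1}$ in the right order), and confirming that the entry $p(w^{-1})^*$ is literally the morphism data attached to the composite of Hopf maps $\mathbbm{k}^{G\,\text{cop}}\xrightarrow{(w^{-1})^*}\mathbbm{k}^{G\,\text{cop}}\xrightarrow{p}\mathbbm{k}G$. Once the normal complement structure $H\cap\Lambda(G)=1$, $H\Lambda(G)=\Lambda_c(G)$, $\Lambda(G)\trianglelefteq\Lambda_c(G)$ is in hand, the isomorphism $\Lambda_c(G)\cong\Aut_c(G)\ltimes\Lambda(G)$ with the stated action is immediate from the standard recognition theorem for (internal) semidirect products, so no further work is needed beyond recording the action formula.
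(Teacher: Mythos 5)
Your proof is correct and takes essentially the same route as the paper: the same embedding of $\Aut_c(G)$ from Proposition \ref{prop:autc-embed}, the same conjugation computation producing the entry $p(w^{-1})^*$, and the same internal semidirect product recognition. The only cosmetic differences are that you conjugate by a generic element of $\Lambda_c(G)$ (the extra convolution terms cancel by centrality, as you note) where the paper conjugates only by the embedded $\Aut_c(G)$, and you justify $\Lambda_c(G)=H\Lambda(G)$ via the factorization $(p,u,1,\id)=(1,u,1,\id)\circ(p,\id,1,\id)$ already recorded in the text, where the paper cites Lemma \ref{lem:pfactors}; these amount to the same argument.
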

\begin{proof}
 We embed $\Aut_c(G)$ into $\Lambda_c(G)$ (and $\Aut(\D(G))$) in particular) as in Proposition \ref{prop:autc-embed}: \[w\mapsto \morph{(w^{-1})^*}{0}{0}{1}.\]  Then we compute
	\begin{align*}
		\morph{w^*}{0}{0}{1}\morph{1}{0}{p}{1}\morph{(w^{-1})^*}{0}{0}{1} &= \morph{1}{0}{p(w^{-1})^*}{1}.
	\end{align*}
Thus $\Lambda(G)\trianglelefteq \Lambda_c(G)$.  Clearly $\Aut_c(G)$ and $\Lambda(G)$ intersect trivially, and by Lemma \ref{lem:pfactors} we conclude that $\Lambda_c(G)$ is the product of these subgroups.  This completes the proof.
\end{proof}

We now consider which elements of $\Lambda_c(G)$ fix $\widehat{G}$, which by construction gives us the normal subgroup $N$.

Consider $\psi\in\Lambda_c(G)$ and let $(\chi,g)\in\Gamma$.  Then $\psi(\chi\# g) = u(\chi)\# p(\chi)g$.  Thus $\psi$ fixes all of $\Gamma$ if and only if $p$ is trivial on $\widehat{G}$, and $u$ is the identity on $\widehat{G}$.  Since $u^*\in\Aut_c(G)$, we see that $\chi=u(\chi) = \chi\circ u^*$ for all $\chi\in \widehat{G}$ $\iff$ $u^*(g^{-1})g\in Z(G)\cap G'$ for all $g\in G$.
\begin{df}\label{def:autcp}
	Let $G$ be any group, and define
	\[\Aut_{c'}(G) = \{\phi\in\Aut_c(G) \ | \ \phi(g^{-1})g\in Z(G)\cap G' \text{ for all } g\in G\}.\]
\end{df}
It is easy to show that $\Aut_{c'}(G)$ is a normal subgroup of $\Aut_c(G)$ and also $\Aut(G)$.  What we have just shown is that $u^*\in\Aut_{c'}(G)$.

Furthermore, since $p$ is given by an isomorphism from $\widehat{A}\to B$, we conclude that $p$ is trivial on $\widehat{G}$ $\iff$ $\chi|_A=\varepsilon_A$ for all $\chi\in \widehat{G}$, which is in turn equivalent to $A\leq G'$.  Since necessarily $A\leq Z(G)$, this is equivalent to $A\leq Z(G)\cap G'$.  Any such $p$ can be uniquely identified with a member of the group $\Hom(Z(G)\cap G',Z(G))$ under the identification of $p$ with an element of $\End(Z(G))$, and vice versa.  Moreover, this respects the group structures.

Combining, we get the desired description of $N$.

\begin{thm}\label{thm:ndesc}
	$N\cong \Aut_{c'}(G)\ltimes \Hom(Z(G)\cap G',Z(G))$.  Thus, $N=1$ if and only if $Z(G)\cap G'=1$.  In particular, if $G$ is abelian or centerless, then $N=1$.  At the other extreme, $Z(G)\leq G'$ $\iff$ $N=\Lambda_c(G)$, and the kernel in this case is non-trivial precisely when $Z(G)\neq 1$.
\end{thm}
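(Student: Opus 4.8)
The plan is to carefully unwind the two conditions that define $N$ — namely that $\psi\in\Aut(\D(G))$ fixes every element of $\Gamma=\widehat G\times G$ — and to track separately the behavior on $1\times G$ and on $\widehat G\times 1$. By the preceding lemma, fixing $1\times G$ elementwise is exactly the condition $\psi\in\Lambda_c(G)$, so $N\subseteq\Lambda_c(G)$ and we may write $\psi=(p,u,1,\id)$ with $u^*\in\Aut_c(G)$. The restriction formula \eqref{eq:res} gives $\psi(\chi\#1)=u(\chi)\#p(\chi)$, so $\psi$ fixes $\widehat G\times 1$ iff $u|_{\widehat G}=\id$ and $p|_{\widehat G}$ is trivial. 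The first step is to translate $u|_{\widehat G}=\id$: since $u^*\in\Aut_c(G)$, for $\chi\in\widehat G$ we have $u(\chi)=\chi\circ u^*$, and this equals $\chi$ for all $\chi$ iff $u^*(g)g^{-1}$ lies in $Z(G)$ (already known) and in the intersection of the kernels of all linear characters, which is $G'$; hence $u^*\in\Aut_{c'}(G)$ in the sense of Definition \ref{def:autcp}. The second step is to translate $p|_{\widehat G}$ trivial: since $p$ corresponds to an isomorphism $\widehat A\cong B$ with $A,B\le Z(G)$, and $p$ kills $e_g$ for $g\notin A$, the map $p$ is trivial on $\widehat G$ iff every $\chi\in\widehat G$ restricts trivially to $A$, i.e. $A\le G'$; combined with $A\le Z(G)$ this says $A\le Z(G)\cap G'$, and such $p$ are exactly the homomorphisms in $\Hom(Z(G)\cap G',Z(G))$ under the identification of $\Lambda(G)$ with $\End(Z(G))$ from Proposition \ref{prop:lambdagroup}.

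Next I would assemble these into the semidirect product decomposition. We have exhibited $N$ as the set of $(p,u,1,\id)$ with $u^*\in\Aut_{c'}(G)$ and $p$ corresponding to an element of $\Hom(Z(G)\cap G',Z(G))\subseteq\End(Z(G))$. The subgroup $\Hom(Z(G)\cap G',Z(G))$ sits inside $\Lambda(G)$ and is normal in $N$ (this is inherited from, and proved exactly as, the normality of $\Lambda(G)$ in $\Lambda_c(G)$ in the corollary above, since $\Aut_{c'}(G)$ acts on it by the same formula $p\mapsto p(w^{-1})^*$ and one checks this preserves the condition $A\le Z(G)\cap G'$). The copy of $\Aut_{c'}(G)$ embedded as $w\mapsto\bigl(\begin{smallmatrix}(w^{-1})^*&0\\0&1\end{smallmatrix}\bigr)$ intersects it trivially and together they generate $N$ by the factorization of Lemma \ref{lem:pfactors}. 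This gives $N\cong\Aut_{c'}(G)\ltimes\Hom(Z(G)\cap G',Z(G))$.

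Finally, the consequences: $N=1$ iff both factors are trivial. If $Z(G)\cap G'=1$ then $\Hom(Z(G)\cap G',Z(G))=1$ trivially, and also $\Aut_{c'}(G)=1$ because its defining condition forces $\phi(g)g^{-1}\in Z(G)\cap G'=1$; conversely if $Z(G)\cap G'\neq 1$ the $\Hom$ factor is already nontrivial (any nonzero element of $Z(G)\cap G'$ admits a nonzero homomorphism to $Z(G)$, e.g. the inclusion), so $N\neq 1$. Hence $N=1\iff Z(G)\cap G'=1$, which holds in particular when $G$ is abelian ($G'=1$) or centerless ($Z(G)=1$). At the other extreme, $Z(G)\le G'$ makes $Z(G)\cap G'=Z(G)$, so the condition $A\le Z(G)\cap G'$ is automatic and $\Aut_{c'}(G)=\Aut_c(G)$, giving $N=\Lambda_c(G)$; and then $N$ is nontrivial exactly when $Z(G)\neq 1$, since $\Hom(Z(G),Z(G))\supseteq\End(Z(G))$ contains the identity.

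I expect the only mildly delicate step to be the verification that the internal semidirect-product structure is genuinely respected — that $\Aut_{c'}(G)$ normalizes the sub-$\Hom$ and that the conjugation action matches the stated formula — but this is a direct matrix computation identical in form to ones already performed for $\Lambda_c(G)$, so no real obstacle arises; the content of the theorem is entirely in the two character-theoretic translations in the first paragraph.
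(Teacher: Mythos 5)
Your proposal is correct and follows essentially the same route as the paper: reduce to $\Lambda_c(G)$ via the fixing of $1\times G$, translate $u|_{\widehat G}=\id$ into $u^*\in\Aut_{c'}(G)$ and $p|_{\widehat G}$ trivial into $A\leq Z(G)\cap G'$ (hence $p\in\Hom(Z(G)\cap G',Z(G))$), and inherit the semidirect product structure from $\Lambda_c(G)\cong\Aut_c(G)\ltimes\Lambda(G)$. The derivations of the corollary statements also match the paper's.
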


Recall that a group $G$ is called a stem group if $Z(G)\leq G'$ \citep{Ha40}.  Note that any stem group is necessarily purely non-abelian.  By Lemma \ref{lem:pfactors} we get the following.
\begin{cor}\label{cor:stemquotient}
	If $G$ is a stem group, then \[\Aut(\D(G))/N\cong\BCG.\]
\end{cor}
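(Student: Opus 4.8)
The plan is to exhibit an explicit complement to $N$ inside $\Aut(\D(G))$ and read off the quotient. Let $M\leq\Aut(\D(G))$ be the subgroup generated by the standard copies of $\Aut(G)$ and $\BCh{G}$. By Theorem \ref{thm:bcg-auts} applied along the diagonal $w=v$, this $M$ consists precisely of the matrices $\morph{(v^{-1})^*}{r}{0}{v}$ with $v\in\Aut(G)$ and $r\in\BCh{G}$, and $M\cong\BCG$ since the diagonal action $r^{(v,v)}=v^*rv$ is exactly the action defining $\BCG$. So it suffices to prove $N\cap M=1$ and $NM=\Aut(\D(G))$: as $N$ is normal (being the kernel of the restriction map $\Aut(\D(G))\to\Aut(\Gamma)$), this yields $\Aut(\D(G))/N\cong M\cong\BCG$.

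First I would record the two facts special to stem groups. A stem group is purely non-abelian, so Corollary \ref{cor:pure-all-isoms} guarantees that every $\morphquad\in\Aut(\D(G))$ has $u,v$ isomorphisms; and since $Z(G)\leq G'$, Theorem \ref{thm:ndesc} identifies $N$ with $\Lambda_c(G)$, that is, $N$ is exactly the set of matrices $\morph{u}{0}{p}{1}$ lying in $\Aut(\D(G))$. The triviality $N\cap M=1$ is then immediate from comparing matrix entries: an element of $M$ has zero lower-left entry, while an element of $N$ has identity lower-right entry and trivial upper-right entry, which together force the matrix to be $\morph{1}{0}{0}{1}$.

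For $NM=\Aut(\D(G))$ I would argue in two steps. Given $\psi=\morphquad\in\Aut(\D(G))$, the component $u$ is invertible, so Lemma \ref{lem:pfactors} writes $\psi$ as a product of an element of $\Lambda(G)$ and an element of $\BCGG$; note $\Lambda(G)\subseteq\Lambda_c(G)=N$. It then remains to show $\BCGG=H_0M$, where $H_0=\{\morph{(x^{-1})^*}{0}{0}{1}:x\in\Aut_c(G)\}$ is the second copy of $\Aut_c(G)$ from Proposition \ref{prop:autc-embed}, which also lies inside $\Lambda_c(G)=N$. This is a short matrix computation: for $\morph{(w^{-1})^*}{r}{0}{v}\in\BCGG$ we have $w^{-1}v\in\Aut_c(G)$, hence $x:=wv^{-1}\in\Aut_c(G)$ (using $\Aut_c(G)\trianglelefteq\Aut(G)$), and one checks directly that $\morph{(w^{-1})^*}{r}{0}{v}=\morph{(x^{-1})^*}{0}{0}{1}\cdot\morph{(v^{-1})^*}{x^*r}{0}{v}$, a product of an element of $H_0\subseteq N$ with an element of $M$. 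Combining the two steps, $\psi=\lambda\cdot h\cdot m$ with $\lambda\in\Lambda(G)\subseteq N$, $h\in H_0\subseteq N$, and $m\in M$; since $N$ is a subgroup, $\lambda h\in N$, so $\psi\in NM$.

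I expect the only mildly delicate point to be the bookkeeping in the identity $\BCGG=H_0M$: keeping the contravariance of $(-)^*$ straight and being consistent about which factor of $\SpAutc(G)$ is normal (as pinned down by Lemma \ref{lem:spautc}). Once Theorem \ref{thm:bcg-auts} is in hand, however, this is routine matrix arithmetic, and everything else reduces to comparing matrix entries against the description of $N$ in Theorem \ref{thm:ndesc}.
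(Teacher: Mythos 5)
Your proof is correct and follows essentially the same route as the paper: the paper derives this corollary from the factorization in Lemma \ref{lem:pfactors} together with $N=\Lambda_c(G)$ from Theorem \ref{thm:ndesc}, and the splitting of the $\BCGG$ factor into an $\Aut_c(G)$ piece inside $N$ times a copy of $\BCG$ is exactly the middle-matrix factorization carried out in the paper's Lemma \ref{lem:lc-bicrossed}. Your decomposition $\psi=\lambda h m$ is that same argument, just organized as an explicit verification that $M$ complements $N$.
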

In the next section we will show that \[\Aut(\D(G))\cong N\rtimes(\BCG)\] precisely when $G$ is a stem group.

\begin{example}\label{ex:dihedrals2}
	$N=1$ for $D_{2n}$, the dihedral group of order $2n$, if and only if $n\not\equiv 0\mod 4$.  On the other hand, $D_{2n}$ is a stem group whenever $n$ is divisible by 4, and so the corollary applies.  See also Examples \ref{ex:dihedrals1} and \ref{ex:dihedrals3}.
\end{example}
\begin{example}\label{ex:fixderived}
	It is well known that, for any group, the elements of $\Aut_c(G)$ fix $G'$ elementwise.  So let $G$ be a perfect group.  Then $G$ is trivially a stem group, $\BCh{G}=1$, and we have $N=\Lambda(G)\cong\End(Z(G))$.  Therefore $\Aut(\D(G))/N\cong\Aut(G)$.  This also follows from Corollary \ref{cor:perfect}, which gives a stronger result.
\end{example}

%-------------------  Section: Factorizations and semidirect products ---------------------
\section{Exact factorizations and semidirect products for \texorpdfstring{$\Aut(\D(G))$}{Aut(D(G))}}\label{sec:products}

We now wish to expand our ability to find exact factorizations for $\Aut(\D(G))$, and to determine when they are semidirect products.  These results may be considered as variations on the main theorem.

We have already shown that $\Lambda(G)$ and $\SpAutc(G)$ give an exact factorization of $\Aut(\D(G))$ if and only if $G$ is purely non-abelian.  The next result determines when this is actually a semidirect product.
\begin{thm}\label{thm:stem-split1}
	$\Lambda(G)$ is normal in $\Aut(\D(G))$ $\iff$ $G$ is a stem group.  Therefore, $\Aut(\D(G))\cong\Lambda(G)\rtimes(\BCGG)$ $\iff$ $G$ is a stem group.
\end{thm}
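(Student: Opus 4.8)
The plan is to reduce the whole statement to the normality claim $\Lambda(G)\trianglelefteq\Aut(\D(G))\iff Z(G)\le G'$, and then read off the factorization statement from Theorem \ref{thm:main}. Indeed, a stem group is purely non-abelian, so if $Z(G)\le G'$ then Theorem \ref{thm:main} already gives the exact factorization $\Aut(\D(G))=\Lambda(G)(\BCGG)$, and once $\Lambda(G)$ is normal this factorization is an internal semidirect product $\Lambda(G)\rtimes(\BCGG)$ (with $\SpAutc(G)\ltimes\BCh{G}$ acting by conjugation); conversely, if $\Aut(\D(G))=\Lambda(G)\rtimes(\BCGG)$ internally then $\Lambda(G)$ is in particular normal, whence $G$ is a stem group. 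Here one uses that $\Lambda(G)\cap\BCGG=1$ always, as recorded after Theorem \ref{thm:bcg-auts}. So everything comes down to the normality biconditional.

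For the direction $Z(G)\not\le G'\Rightarrow\Lambda(G)\ntrianglelefteq\Aut(\D(G))$ I would argue directly. Fix $b\in Z(G)\setminus G'$, so $b$ has nontrivial image in $G^{\mathrm{ab}}$, and (using the standing hypothesis $G^{\mathrm{ab}}\cong\widehat{G^{\mathrm{ab}}}$) choose $r\in\BCh{G}=\Hom(G,\widehat G)$, obtained by composing $G\twoheadrightarrow G^{\mathrm{ab}}$ with an isomorphism $G^{\mathrm{ab}}\xrightarrow{\sim}\widehat{G^{\mathrm{ab}}}$, so that $r(b)\ne\varepsilon$. Let $\lambda=\morph{1}{0}{p}{1}\in\Lambda(G)$ correspond to $\id_{Z(G)}$, so that the associated subgroups satisfy $A=B=Z(G)$. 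Multiplying out $2\times 2$ matrices (the calculus of Theorem \ref{ABMCor2.3}), conjugating $\lambda$ by $\morph{1}{r}{0}{1}\in\BCh{G}$ yields a matrix whose $(1,1)$-entry is $\id * (r\circ p)$, and since $(r\circ p)(f^{-1}(b))=r(b)\ne\varepsilon$ the map $r\circ p$ is not the convolution unit, so this $(1,1)$-entry is not $\id$; as every element of $\Lambda(G)$ has $(1,1)$-entry $\id$, the conjugate escapes $\Lambda(G)$.

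For the direction $Z(G)\le G'\Rightarrow\Lambda(G)\trianglelefteq\Aut(\D(G))$, note that $G$ is purely non-abelian so $\Aut(\D(G))=\Lambda(G)(\BCGG)$ by Theorem \ref{thm:main}; since $\Lambda(G)$ is abelian it suffices that $\BCGG$ normalizes it, and as $\BCGG$ is generated by $\SpAutc(G)$ and $\BCh{G}$ I need only check those two. That $\SpAutc(G)$ normalizes $\Lambda(G)$ is exactly Corollary \ref{cor:sp-lambda}. For $\BCh{G}$, writing $\lambda=\morph{1}{0}{p}{1}$ and $\beta=\morph{1}{r}{0}{1}$, the matrix calculus gives $\beta\lambda=\morph{\id+r\circ p}{r}{p}{\id}$ and $\lambda\beta=\morph{\id}{r}{p}{\id+p\circ r}$, so it is enough to show $r\circ p$ and $p\circ r$ are both the relevant convolution units. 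This is where the stem hypothesis is used: the subgroups $A,B$ attached to $p$ lie in $Z(G)\le G'$, and a bicharacter of $G$ restricts trivially to $G'\times G$ and to $G\times G'$; combining this with the facts that $p$ is unital with the unit $\sum_{a\in A}e_a$ of $\mathbbm{k}^A$ mapping to $1_G$, and that $r(1)=\varepsilon$, forces $r\circ p$ to be the convolution unit of $\End(\mathbbm{k}^{G\,\mathrm{cop}})$ and $p\circ r$ that of $\End(\mathbbm{k}G)$. Hence $\BCh{G}$ in fact centralizes $\Lambda(G)$, and $\Lambda(G)\trianglelefteq\Aut(\D(G))$.

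The main obstacle is this last computation: pinning down $r\circ p$ and $p\circ r$ as the exact obstructions and then extracting precisely the consequence of $Z(G)\le G'$ that kills them, namely that bicharacters annihilate the commutator subgroup on either side. Everything else is bookkeeping with the matrix description of Theorem \ref{ABMCor2.3} together with the already-established structure of $\Lambda(G)$, $\SpAutc(G)$ and the exact factorization of Theorem \ref{thm:main}.
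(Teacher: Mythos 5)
Your proof is correct and follows essentially the same route as the paper: both reduce, via Theorem \ref{thm:main} and the matrix calculus, to checking that $\BCGG$ normalizes $\Lambda(G)$, identify $r\circ p$ and $p\circ r$ as the precise obstructions, and kill them in the stem case using the fact that elements of $\BCh{G}$ vanish on $G'$. The only cosmetic differences are that you conjugate by the generators $\SpAutc(G)$ and $\BCh{G}$ separately (citing Corollary \ref{cor:sp-lambda}) and exhibit an explicit witness $b\in Z(G)\setminus G'$ for the failure of normality, whereas the paper performs a single conjugation by a general element of $\BCGG$ and reads off the conditions from all four matrix entries.
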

\begin{proof}
	Let $\ds{\morph{u}{r}{0}{v}}\in\BCGG\subseteq\Aut(\D(G))$.  Then its inverse is $\ds{\morph{u^{-1}}{-u^{-1}r v^{-1}}{0}{v^{-1}}}$.  Therefore the conjugate
	\[ morph{u}{r}{0}{v} \morph{1}{0}{p}{1} \morph{u^{-1}}{-u^{-1}r v^{-1}}{0}{v^{-1}}\]
	is equal to
	\[ \morph{1+rpu^{-1}}{ -rpu^{-1}rv^{-1} }{vpu^{-1}}{-vpu^{-1}rv^{-1}+1}. \]
	For this to be an element of $\Lambda(G)$, the diagonal entries must both be 1.  So $rpu^{-1}=0$ and $-vpu^{-1}rv^{-1}=0$.  Since $u,v$ are isomorphisms, this simplifies to $rp=0$ and $pu^{-1}r=0$.  Note that $rp=0$ implies that $-rpu^{-1}rv^{-1}=0$, which is the other condition for $\Lambda(G)$ to be normal.  Now $rp=0$ for all choices of $r,p$ $\iff$ $Z(G)\leq G'$.  In other words, $G$ is necessarily a stem group, and therefore also purely non-abelian.  So we may suppose that $G$ is a stem group.  For any $g\in G$ and $z\in Z(G)$ the coefficient of $e_z$ in $r(g)$ is therefore 1.  Since $Z(G)\Char G$ and $u^*\in\Aut(G)$, we conclude that $pu^{-1}v=0$.
	
	This completes the proof.
\end{proof}

We can improve the situation by enlarging our consideration to $\Lambda_c(G)$.
\begin{lem}\label{lem:lc-bicrossed}
	If $G$ is purely non-abelian, then $\Lambda_c(G)$ and $\BCG$ give an exact factorization of $\Aut(\D(G))$.
\end{lem}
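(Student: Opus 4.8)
The plan is to show that $\Lambda_c(G)$ and $\BCG$ intersect trivially and that together they generate all of $\Aut(\D(G))$, for $G$ purely non-abelian. Here $\BCG = \Aut(G)\ltimes\BCh{G}$ should be read via its natural embedding into $\Aut(\D(G))$: the $\Aut(G)$-part is $v\mapsto\morph{(v^{-1})^*}{0}{0}{v}$ and the $\BCh{G}$-part is $r\mapsto\morph{1}{r}{0}{1}$, so a typical element of $\BCG$ has the matrix form $\morph{(v^{-1})^*}{(v^{-1})^*r}{0}{v}$ (or the reordered product), and in particular it has $p=0$ and $u^*=v^{-1}\in\Aut(G)$. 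Meanwhile every element of $\Lambda_c(G)$ has $r=0$ and $v=\id$. First I would dispatch the trivial-intersection claim: an element lying in both has $r=0$, $p=0$ (from $\Lambda_c$ we actually get $v=\id$, and from $\BCG$ we get $p=0$), hence $v=\id$ forces the $\Aut(G)$-part of the $\BCG$-factor to be trivial and then $r=0$ forces the bicharacter to be trivial, so the element is the identity; symmetrically, $p=0$ and $u^*\in\Aut_c(G)$ with $v=\id$ combine with $u^*\in\Aut(G)$ being arbitrary to pin things down—but the cleanest route is just to observe the element has $p=r=0$, $v=\id$, and $u^*\in\Aut_c(G)\cap\Aut(G)$ arising as the $\Aut(G)$-part of a $\BCG$ element whose $v=\id$, hence $u^*=\id$, so it is $\morph{1}{0}{0}{1}$.

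The substantive step is the factorization $\Aut(\D(G)) = \Lambda_c(G)\,(\BCG)$. Take an arbitrary $\psi=\morphquad\in\Aut(\D(G))$. Since $G$ is purely non-abelian, Theorem \ref{thm:pure-uv-isoms} (equivalently Corollary \ref{cor:pure-all-isoms}) guarantees $u$ and $v$ are isomorphisms, and Theorem \ref{autcents} gives $A,B\leq Z(G)$ with $u$ Hopf; by Theorem \ref{thm:bcg-auts} / Lemma \ref{lem:theta-aut1} we moreover have $u^*\circ v\in\Aut_c(G)$. Using the $v$-invertible factorization from Lemma \ref{lem:pfactors},
\[
\morph{u}{r}{p}{v} = \morph{u-rv^{-1}p}{r}{0}{v}\morph{1}{0}{v^{-1}p}{1},
\]
where the right-hand factor lies in $\Lambda(G)\subseteq\Lambda_c(G)$ and the left-hand factor lies in $\BCGG$. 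So it remains to factor an arbitrary element $\morph{u'}{r'}{0}{v'}\in\BCGG$ as (something in $\Lambda_c(G)$)$\,\cdot\,$(something in $\BCG$). Write $w=(u')^*$ and note $w^{-1}v'\in\Aut_c(G)$. The idea is to peel off the $\Aut(G)$-part using $v'$ itself: set $\beta = \morph{(v'^{-1})^*}{0}{0}{v'}\in\BCG$ (the $\Aut(G)$-embedding) and compute
\[
\morph{u'}{r'}{0}{v'}\beta^{-1} = \morph{u'}{r'}{0}{v'}\morph{v'^*}{0}{0}{v'^{-1}} = \morph{u'v'^*}{r'v'^{-1}}{0}{1}.
\]
Now $(u'v'^*)^* = v'^{*\,*}\,(u')^* $... more carefully, dualizing: $(u'v'^*)^* = (v'^*)^*(u')^* = v' w$, wait—since $v'^{**}=v'$ under the identification $G\cong G^{\mathrm{op}}$, we get $(u'v'^*)^* = v'\circ w^{-1}$... the correct bookkeeping is that $u'v'^*$ has dual $(v')^{*\,*}(u')^*$; I would simply track that $(u'v'^*)^*$ is conjugate to $w^{-1}v'\in\Aut_c(G)$ up to inversion, hence itself lies in $\Aut_c(G)$, so $\morph{u'v'^*}{r'v'^{-1}}{0}{1}$ has $u$-part with $u^*\in\Aut_c(G)$, $v=\id$, $p=0$. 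Multiplying on the right by $\morph{1}{-r'v'^{-1}}{0}{1}\in\BCh{G}\subseteq\BCG$ kills the $r$-entry, leaving $\morph{u'v'^*}{0}{0}{1}$, which is exactly an element of $\Lambda_c(G)$ (indeed of the $\Aut_c(G)$-embedding inside it). Reassembling, $\psi = (\text{element of }\Lambda_c(G))\cdot(\text{element of }\BCG)$.

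The main obstacle I anticipate is purely notational rather than conceptual: keeping the dual/co-opposite bookkeeping straight when multiplying the $2\times 2$ matrices—in particular verifying that each intermediate matrix genuinely lies in $\Aut(\D(G))$ (not merely $\End$), and that $(u'v'^*)^*$ really does land in $\Aut_c(G)$, which uses $u'^*v'\in\Aut_c(G)$ together with normality of $\Aut_c(G)$ in $\Aut(G)$ (Definition \ref{def:group-centaut}) and the fact that $\Aut_c(G)$ is closed under inversion. A secondary point to state carefully is that $\BCG$ here denotes the subgroup $\SpAutc(G)\ltimes\BCh{G}$ cut down to $\Aut(G)\ltimes\BCh{G}$, i.e. the sub-bicrossed-product of Theorem \ref{thm:bcg-auts} in which the $\SpAutc(G)$-factor is the diagonal $\Aut(G)$; once that identification is fixed, the exact-factorization conclusion $\Aut(\D(G))=\Lambda_c(G)(\BCG)$ with $\Lambda_c(G)\cap\BCG=1$ follows from the two steps above.
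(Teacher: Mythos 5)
Your overall strategy (trivial intersection plus a matrix factorization built from Lemma \ref{lem:pfactors}) is close in spirit to the paper's proof, but as written the argument has a genuine gap in the final ``reassembling'' step, plus a computational slip. You chose the $v$-invertible factorization of Lemma \ref{lem:pfactors}, which writes $\psi = XY$ with $X\in\BCGG$ on the \emph{left} and $Y\in\Lambda(G)\subseteq\Lambda_c(G)$ on the \emph{right}. After decomposing $X=ST$ with $S\in\Lambda_c(G)$ and $T\in\BCG$ you get $\psi = STY\in\Lambda_c(G)\,(\BCG)\,\Lambda_c(G)$, and there is no way to conclude $\psi\in\Lambda_c(G)(\BCG)$ from this: neither subgroup is normal in general (the paper shows $\Lambda_c(G)$ is normal only for stem groups), so $TY$ cannot simply be pushed back into $\BCG$, and rewriting $TY$ in the form $(\Lambda_c(G))(\BCG)$ is exactly an instance of the statement being proved. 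The fix is to use the \emph{other} factorization of Lemma \ref{lem:pfactors} (the $u$-invertible one), which places the $\Lambda(G)$ factor on the left; the $\Aut_c$-type diagonal piece you extract then combines with it inside the subgroup $\Lambda_c(G)$, and everything lands in the right order. This is essentially what the paper does: it peels off $\morph{1}{u^{-1}r}{0}{1}$ on the right and $\morph{1}{0}{pu^{-1}}{1}$ on the left, and splits the remaining diagonal matrix as an $\Aut_c$-type embedding times a diagonal $\Aut(G)$-embedding. Alternatively, one could bypass the decomposition entirely by counting: Theorem \ref{thm:main} gives $|\Aut(\D(G))|=|\Lambda_c(G)|\cdot|\BCG|$, so trivial intersection already forces $\Lambda_c(G)(\BCG)=\Aut(\D(G))$.

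The secondary issue: multiplying $\morph{u'v'^*}{r'v'^{-1}}{0}{1}$ on the right by $\morph{1}{-r'v'^{-1}}{0}{1}$ does \emph{not} kill the $r$-entry; the top-right entry of the product is $-u'v'^*r'v'^{-1}+r'v'^{-1}$, which vanishes only if $u'v'^*$ fixes the image of $r'$ pointwise (true for stem groups, where linear characters kill $Z(G)\leq G'$, but not in general, e.g.\ for the group of Example \ref{ex:smallgroup-32-2}). The correct right-hand multiplier is $\morph{1}{-(u'v'^*)^{-1}r'v'^{-1}}{0}{1}$; equivalently, strip off $S=\morph{u'v'^*}{0}{0}{1}$ on the left first and then read off the remaining bicharacter. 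Your trivial-intersection argument and the verification that $(u'v'^*)^*\in\Aut_c(G)$ (via normality of $\Aut_c(G)$ in $\Aut(G)$) are fine.
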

\begin{proof}
Suppose $G$ is purely non-abelian and $\psi=\ds{\morph{u}{r}{p}{v}}\in\Aut(\D(G))$.  We can therefore write
	\begin{align*}
		\morph{u}{r}{p}{v} &= \morph{u}{0}{p}{-pu^{-1}r+v} \morph{1}{u^{-1}r}{0}{1}\\
		&= \morph{1}{0}{pu^{-1}}{1} \morph{u}{0}{0}{-pu^{-1}r+v} \morph{1}{u^{-1}r}{0}{1}.
	\end{align*}
	Now, the matrix in the middle may be written as the product
	\[\morph{u(-pu^{-1}r+v)^*}{0}{0}{1} \morph{((-pu^{-1}r+v)^{-1})^*}{0}{0}{-pu^{-1}r+v}.\]
	We conclude that $\psi= ST$ with $S\in\Lambda_c(G)$ and $T\in\BCG$ where
	\begin{align*}
		S &= \morph{1}{0}{pu^{-1}}{1} \morph{u (-pu^{-1}r+v)^*}{0}{0}{1},\\
		T &= \morph{((-pu^{-1}r+v)^{-1})^*}{0}{0}{-pu^{-1}r+v} \morph{1}{u^{-1}r}{0}{1}.
	\end{align*}	
	Therefore $\Aut(\D(G)) = \Lambda_c(G)(\BCG)$, and clearly \[\Lambda_c(G)\cap\BCG =1.\]  So by definition, $\Lambda_c(G)$ and $\BCG$ give an exact factorization.
\end{proof}
We subsequently have the following variation of Theorem \ref{thm:stem-split1}.
\begin{thm}\label{thm:stem-split2}
	$\Lambda_c(G)$ is normal in $\Aut(\D(G))$ $\iff$ $G$ is a stem group.  Therefore, $\Aut(\D(G))\cong\Lambda_c(G)\rtimes(\BCG)$ $\iff$ $G$ is a stem group.
\end{thm}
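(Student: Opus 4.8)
The target reduces to the first equivalence --- $\Lambda_c(G)$ is normal in $\Aut(\D(G))$ if and only if $G$ is a stem group --- from which the statement about semidirect products follows formally using Lemma~\ref{lem:lc-bicrossed}. For the direction ``$G$ stem $\Rightarrow$ $\Lambda_c(G)$ normal'' I would avoid conjugation entirely: if $Z(G)\le G'$, then Theorem~\ref{thm:ndesc} gives $N=\Lambda_c(G)$, where $N$ is the kernel of the restriction homomorphism $\Aut(\D(G))\to\Aut(\Gamma)$ from Section~\ref{sec:restriction}. Since $N$ is a kernel it is normal, hence so is $\Lambda_c(G)$. Moreover a stem group is purely non-abelian, so Lemma~\ref{lem:lc-bicrossed} yields the exact factorization $\Aut(\D(G))=\Lambda_c(G)(\BCG)$; together with normality of $\Lambda_c(G)$ this is an internal semidirect product $\Aut(\D(G))\cong\Lambda_c(G)\rtimes(\BCG)$.

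For the converse I would mimic the computation in the proof of Theorem~\ref{thm:stem-split1}. Assume $\Lambda_c(G)$ is normal. Given $r\in\BCh{G}$ with associated bicharacter $\omega$, Proposition~\ref{prop:bch-auts} gives $\morph{1}{r}{0}{1}\in\Aut(\D(G))$, and for any $p$ determined by abelian subgroups $A,B\le Z(G)$ we have $\morph{1}{0}{p}{1}\in\Lambda(G)\subseteq\Lambda_c(G)$. Conjugating and expanding (with ``$0$'' denoting the convolution unit $\eta\varepsilon$) gives
\[\morph{1}{r}{0}{1}\morph{1}{0}{p}{1}\morph{1}{-r}{0}{1}=\morph{1+rp}{-rpr}{p}{1-pr}.\]
For this to lie in $\Lambda_c(G)$ its $r$- and $v$-entries must be trivial, i.e.\ $-rpr=\eta\varepsilon$ and $1-pr=\id$; since $pr=\eta\varepsilon$ already forces $rpr=\eta\varepsilon$, the single requirement is $pr=\eta\varepsilon$. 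No condition is imposed on the remaining two entries, because the conjugate is an automorphism already of the shape $\morph{u}{0}{p}{1}$, and any such automorphism lies in $\Lambda_c(G)$ (with $u^*$ automatically in $\Aut_c(G)$) by the lemma preceding Definition~\ref{def:autcp}. Unwinding the description of $p$ and $r$, the condition $pr=\eta\varepsilon$ says precisely that $\omega(g,a)=1$ for all $g\in G$ and $a\in A$. Taking $A=B=Z(G)$ and letting $\omega$ range over all of $\BCh{G}$, this forces every bicharacter of $G$ to be trivial on $G\times Z(G)$; viewing bicharacters as homomorphisms $G\to\widehat{G}$ and using that the common kernel of all such homomorphisms is $G'$, we get $Z(G)\le G'$, i.e.\ $G$ is a stem group --- this is exactly the same computation already invoked for Theorem~\ref{thm:stem-split1}. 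The last equivalence is then immediate: interpreting ``$\Aut(\D(G))\cong\Lambda_c(G)\rtimes(\BCG)$'' as the internal semidirect product coming from the factorization of Lemma~\ref{lem:lc-bicrossed}, it holds exactly when $\Lambda_c(G)$ is normal, hence (by the first equivalence) exactly when $G$ is a stem group.

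The main obstacle is purely bookkeeping in the matrix calculus of Theorem~\ref{ABMCor2.3}: one must keep in mind that the ``zero'' entries are the convolution unit $\eta\varepsilon$, not the literal zero map, so the condition extracted from the conjugate is $pr=\eta\varepsilon$ --- equivalently, the simultaneous triviality on $Z(G)$ of a family of bicharacters --- rather than any naive vanishing, and one must be sure the $(1,1)$- and $(2,1)$-entries of the conjugate contribute no further constraint, which is where the characterization of $\Lambda_c(G)$ as the automorphisms fixing $1\times G$ elementwise does the work. I should also note that the forward implication could be proved by brute-force conjugation of a general element of $\Lambda_c(G)$ by a general element of $\BCG$ and checking it stays in $\Lambda_c(G)$ when $Z(G)\le G'$, but routing through the kernel $N$ via Theorem~\ref{thm:ndesc} is shorter and sidesteps that computation.
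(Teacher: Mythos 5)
Your proof is correct, and it differs from the paper's in a worthwhile way. For the direction ``$G$ stem $\Rightarrow\Lambda_c(G)$ normal'' the paper conjugates a general element $\morph{u}{0}{p}{1}$ by a general element $\morph{(v^{-1})^*}{r}{0}{v}$ of $\BCG$, and must then argue separately that the resulting $r$-entry $-(v^{-1})^*uv^*rv^{-1}-rpv^*rv^{-1}+rv^{-1}$ vanishes, using that $u^*\in\Aut_c(G)$ and that $\Aut_c(G)\trianglelefteq\Aut(G)$ to recognize $(-(v^{-1})^*uv^*+1)$ as the dual of a homomorphism $G\to Z(G)$; normality in all of $\Aut(\D(G))$ then follows implicitly from the factorization of Lemma~\ref{lem:lc-bicrossed}. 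Your route through Theorem~\ref{thm:ndesc} --- $Z(G)\leq G'$ forces $N=\Lambda_c(G)$, and $N$ is normal because it is the kernel of the restriction to $\Gamma$ --- eliminates that computation entirely and gives normality in the full automorphism group at once; the cost is that it leans on the structural analysis of $N$ from Section~\ref{sec:restriction}, but that work is already in place and there is no circularity. For the converse your computation is essentially the paper's specialized to $u=\id$, $v=\id$: the paper's necessary condition $pv^*r=0$ for all $v$ reduces at $v=\id$ to your $pr=0$, and both arguments then take $A=Z(G)$ and let $r$ range over $\Hom(G,\widehat{G})$ to force $Z(G)\leq G'$. Your observation that the $(1,1)$- and $(2,1)$-entries impose no further constraint, because $\Lambda_c(G)$ is by definition the set of \emph{all} automorphisms with trivial $r$-entry and identity $v$-entry, is exactly right and is the same point the paper uses. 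The handling of the semidirect-product clause via Lemma~\ref{lem:lc-bicrossed} matches the paper's (tacit) reasoning.
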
 
\begin{proof}
 We consider the conjugation action of $\BCG$ on $\Lambda_c(G)$.  We can compute that the conjugate
	\[\morph{(v^{-1})^*}{r}{0}{v} \morph{u}{0}{p}{1} \morph{v^*}{-v^*rv^{-1}}{0}{v^{-1}}\]
is equal to
	\[ \morph{-(v^{-1})^*uv^*+rpv^*}{-(v^{-1})^*uv^* rv^{-1}-rpv^*rv^{-1}+rv^{-1}}{vpv^*}{-vpv^*rv^{-1}+1}.\]
	
	For this to be an element of $\Lambda_c(G)$, we must have the upper right entry is $0$, and the lower right is $1$.  The last condition is equivalent to $-vpv^*rv^{-1}=0$, which is equivalent to $pv^*r = 0$.  This must hold for all choices of $p,v,r$ for normality to hold.  Thus we have $pr=0$ for all $p,r$ in particular, and as before this implies that $Z(G)\leq G'$.  Thus for normality to hold we must have that $G$ is a stem group, and so in particular purely non-abelian.  
	
	So we may suppose, then, that $G$ is a stem group, and therefore $pv^*r=0$ for all choices of $p,v,r$.  We need only show that the upper right entry, the bicharacter in the conjugate, is trivial.
	
	Since $G$ is a stem group, this bicharacter being trivial is equivalent to $(-(v^{-1})^*uv^{*}+1)r = 0$.  Since $u^*\in\Aut_c(G)$, it follows by normality that $vu^*v^{-1}\in\Aut_c(G)$ as well.  Therefore $(-(v^{-1})^*u v^*+1)$ is the dual of a group homomorphism $G\to Z(G)$.  Since $G$ is a stem group, it therefore follows that $(-(v^{-1})^*uv^{*}+1)r = 0$, as desired.
	
	This completes the proof.
\end{proof}

\begin{example}\label{ex:dihedrals3}
	Consider $G=D_{2n}$ with $4\mid n$.  Then $\widehat{G}\cong\BZ_2^2$, and subsequently $\BCh{G}\cong\BZ_2^4$.  Furthermore, $Z(G)\cong\BZ_2$, implying $\Lambda(G)\cong\BZ_2$, and it is routine to check that $\Aut_c(G)\cong\BZ_2^2$.  In particular, $\Lambda_c(G)\cong\BZ_2^3$ and $\SpAutc(G)\cong \BZ_2^2\rtimes \operatorname{Hol}(\BZ_n)$.  Since $G$ is a  stem group, the previous two theorems give us the isomorphisms 
\begin{align*}
	\Aut(\D(D_{2n}))&\cong\BZ_2^3\rtimes (\operatorname{Hol}(\BZ_n)\ltimes \BZ_2^4)\\
	&\cong \BZ_2\times ((\BZ_2^2\rtimes\operatorname{Hol}(\BZ_n))\ltimes \BZ_2^4).
\end{align*}
See also Examples \ref{ex:dihedrals1} and \ref{ex:dihedrals2}.  Note that in this case we have \[|\Aut(\D(D_{2n}))| = 2^7 n\phi(n),\] where $\phi$ is the Euler totient function.  For $n=4$ we see that $\Aut(\D(D_8))$ has order $2^{10}=1024$.
\end{example}
\begin{example}
	Let $Q$ be the quaternion group of order 8.  The following are all well-known:
	\begin{align*}
		Q'=Z(Q)&\cong\BZ_2,\\
		\widehat{Q}&\cong\BZ_2^2,\\
		\Aut(Q)&\cong S_4,\\
		\Aut_c(Q)&\cong \BZ_2^2.
	\end{align*}
In particular, $Q$ is a stem group.  It follows that $\Lambda(Q)\cong\BZ_2$ and $\BCh{Q}\cong \BZ_2^4$.  So the theorems provide us the isomorphisms
	\begin{align*}
		\Aut(\D(Q))&\cong \BZ_2\times ((\BZ_2^2\rtimes S_4)\ltimes\BZ_2^4)\\
		&\cong \BZ_2^3\rtimes (S_4\ltimes \BZ_2^4).
	\end{align*}
	In particular, $\Aut(\D(Q))$ has order $3072=2^{10}\cdot 3$.
	%The derived subgroup of $\Aut(\D(Q))$ has order 384, and contains an element which is not a commutator.  The center has order 4.  The conjugacy classes have order 1, 2, 3, 6, 48, and 128, of which there are 4, 28, 2, 70, 43, and 4 such classes, respectively.  The order of $\End(\D(Q))$ is $15 872$.
\end{example}

Finally, we consider when $N$ has a complement.  Since $N$ is necessarily normal, this gives the  final case we consider of when $\Aut(\D(G))$ can be expressed as a semidirect product.
\begin{thm}\label{prop:N-split}
	Suppose $G$ is purely non-abelian. The kernel of the restriction $\Aut(\D(G))\to\Aut(\Gamma)$ has a complement in $\Aut(\D(G))$ $\iff$ $Z(G)\cap G'$ is a direct factor of $Z(G)$.
\end{thm}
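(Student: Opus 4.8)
The plan is to reduce the existence of a complement for $N$ to the condition that $W:=Z(G)\cap G'$ be a direct factor of $Z(G)$, and then further to a statement about the finite abelian group $\End(Z(G))$. By Theorem~\ref{thm:ndesc} we have $N\cong\Aut_{c'}(G)\ltimes\Hom(W,Z(G))$ with $N\le\Lambda_c(G)$, and by Proposition~\ref{prop:lambdagroup} the abelian group $\Lambda(G)$ is $\End(Z(G))$ under addition of endomorphisms. First I would pin down $N\cap\Lambda(G)$: fixing an isomorphism $Z(G)\cong\widehat{Z(G)}$ and writing $W^\perp\le Z(G)$ for the corresponding annihilator of $W$, it is the additive subgroup $N_\Lambda$ of endomorphisms that vanish on $W^\perp$, so $N_\Lambda=\Hom(Z(G)/W^\perp,Z(G))$. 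The target reduction is: $N$ has a complement in $\Aut(\D(G))$ $\iff$ $N_\Lambda$ is an additive direct factor of $\End(Z(G))$ $\iff$ $W^\perp$ is a direct factor of $Z(G)$ $\iff$ (by Pontryagin duality) $W$ is a direct factor of $Z(G)$. The last two equivalences are elementary: apply $\Hom(-,Z(G))$ to $0\to W^\perp\to Z(G)\to Z(G)/W^\perp\to 0$ and use that a subgroup of a finite abelian group is a direct factor if and only if it is pure.

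For the sufficiency half of the reduction, assume $Z(G)=W\times X$ and choose the identification so that $W^\perp=X$. Then the endomorphisms vanishing on $W$ form an explicit additive complement $E\cong\Hom(X,Z(G))$ of $N_\Lambda$ in $\Lambda(G)$, and the same splitting produces a complement of $\Aut_{c'}(G)$ inside $\Aut_c(G)$ from the central automorphisms $g\mapsto g\,z(g)$ with $z\in\Hom(G/G',X)$. I would then build the complement of $N$ in $\Aut(\D(G))$ out of $E$, out of this complement of $\Aut_{c'}(G)$, and out of $\BCGG$ (an honest subgroup by Theorem~\ref{thm:bcg-auts}), gluing them by a twist that compensates for the failure of $\BCh{G}$ to centralize $\Lambda(G)$; the twist is well defined precisely because every bicharacter annihilates $G'\supseteq W$, so the conjugation action of $\BCh{G}$ only disturbs the $X$-part of the $\Lambda$-entry, which is exactly the part carried by $E$. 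For the necessity half, given a complement $C$ I would first note that $C\cap\Lambda_c(G)$ is a complement of $N$ inside $\Lambda_c(G)$ (by the exact factorization $\Aut(\D(G))=\Lambda_c(G)(\BCG)$ of Lemma~\ref{lem:lc-bicrossed} together with $N\le\Lambda_c(G)$), and then remove the $\Aut_{c'}(G)$-direction — legitimate because $\Aut_{c'}(G)$ acts trivially on $\widehat{G}$ while bicharacters annihilate $G'\supseteq W$, so the offending correction terms can be absorbed — arriving at a complement of $N_\Lambda$ in the abelian group $\Lambda(G)\cong\End(Z(G))$.

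The main obstacle is the reduction itself, namely controlling the interplay of $N_\Lambda\le\Lambda(G)$, $\Aut_{c'}(G)$, and $\BCh{G}$. Since $\Lambda(G)$ is normal in $\Aut(\D(G))$ only when $G$ is a stem group (Theorem~\ref{thm:stem-split1}), a complement of $N$ is genuinely not of the form $(\text{complement of }N\text{ in }\Lambda_c(G))\cdot\BCG$: bicharacters really mix the $\Lambda$-entry with the off-diagonal entry, and the three copies of $\Aut_c(G)$ inside $\Aut(\D(G))$ — the one in the standard $\Aut(G)$ and the two of Proposition~\ref{prop:autc-embed} — must be reconciled so that exactly $\Aut_{c'}(G)$ is cut out of $N$. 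The hypothesis that $W$ is a direct factor of $Z(G)$ is precisely the input that makes these compatibility checks succeed; when $G$ is a stem group the whole construction collapses to $\BCG$, recovering the semidirect decomposition of Theorem~\ref{thm:stem-split2}.
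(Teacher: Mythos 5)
Your proposal follows essentially the same route as the paper: reduce to complementing $N$ inside $\Lambda_c(G)$ using the exact factorization $\Aut(\D(G))=\Lambda_c(G)(\BCG)$ of Lemma \ref{lem:lc-bicrossed} together with normality of $N$, then split the problem into the abelian piece --- where $N\cap\Lambda(G)\cong\Hom(Z(G)\cap G',Z(G))$ must be a direct factor of $\End(Z(G))$, which is equivalent to $Z(G)\cap G'$ being a direct factor of $Z(G)$ (the paper invokes Fitting's Lemma where you use annihilators, Pontryagin duality and purity, but this is the same elementary fact about finite abelian groups) --- and the $\Aut_c(G)$ piece, where $\Aut_{c'}(G)$ must be complemented in $\Aut_c(G)$ via the parametrization $\phi\mapsto z_\phi$ by $\Hom(G,Z(G))$, exactly as in the paper's use of the Adney--Yen bijection. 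The one point of divergence is the final reassembly: you assert that a global complement cannot have the form $\bigl(\text{complement of }N\text{ in }\Lambda_c(G)\bigr)\cdot(\BCG)$ and appeal to an unspecified ``twist,'' whereas the paper's proof simply stops at the reduction to $\Lambda_c(G)$; if you maintain that a twist is genuinely needed you must construct it explicitly, but this is a quibble with (or refinement of) the paper's own terse reduction step rather than a different method, and the two sub-problems you solve are identical to the paper's.
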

\begin{proof}
	Since the kernel $N$ is normal and is contained in $\Lambda_c(G)$, by Lemma \ref{lem:lc-bicrossed} it suffices to prove that $N$ has a complement in $\Lambda_c(G)$.
	
	Let $u_c^*\in\Aut_{c'}$ and $p_c$ be such that $(p_c,u_c,1,\id)\in N$.  Then for any $(p,u,1,\id)\in\Lambda_c(G)$ we may write
	\begin{align*}
		\morph{u}{0}{1}{p}&=\morph{u_c}{0}{p_c}{1} \morph{u_c^{-1} u}{0}{-p_c u_c^{-1} u+p}{1}.
	\end{align*}
	We consider first the bottom left entries.  Since $Z(G)$ and $G'$ are both characteristic in $G$, it follows that $Z(G)\cap G'$ is characteristic in $G$.  Therefore we have that $(p_c u_c^{-1}u,\id,1,\id)\in N$.  So for $N$ to have a complement, we conclude that it is necessary for $\Hom(Z(G)\cap G',Z(G))$ to be a direct factor of $\End(Z(G))$.  Applying Fitting's Lemma to the canonical injection, we conclude that $\Hom(Z(G)\cap G',Z(G))$ is a direct factor of $\End(Z(G))$ if and only if $Z(G)\cap G'$ is a direct factor of $Z(G)$.
	
	So suppose that $Z(G)\cap G'$ is a direct factor of $Z(G)$, and write \[Z(G)=(Z(G)\cap G')\times C\] for some abelian group $C$.  Considering now the upper left entries, we see that for $N$ to have a complement we must have that the normal subgroup $\Aut_{c'}(G)$ has a complement in $\Aut_c(G)$.  Since $G$ is purely non-abelian, by \citep{AY65} there is a bijection $\Aut_c(G)\to\Hom(G,Z(G))$ given by $\phi\mapsto(g\mapsto \phi(g)g^{-1})$; letting $S$ denote the inversion map on the group, this is equivalent to $\phi\mapsto \phi*S$.  For any $\phi\in\Aut_c(G)$ let $z_{\phi}$ denote the image of $\phi$ under this bijection.  Thus $\phi=z_\phi*\id$.  We have the following identities:
	\begin{align}
		z_{\phi^{-1}} &= (S\circ z_\phi)\circ \phi^{-1}, \label{eq:autc-inv}\\
		z_{\phi\circ\psi} &= (z_\phi\circ\psi)*z_\psi.\label{eq:autc-comp}
	\end{align}
From this it is easy to see that the pre-image of $\Hom(G,C)$ forms a subgroup of $\Aut_c(G)$ which intersects $\Aut_{c'}(G)$ trivially.  We need only verify that $\Aut_c(G)$ is the product of these two subgroups.	By assumptions, we may uniquely write $z_\phi= z_2*z_1$ with $z_1\in\Hom(G,Z(G)\cap G')$ and $z_2\in\Hom(G,C)$.  Set $\phi_1= z_1*\id$ and $\phi_2=(z_2\circ\phi_1^{-1})*\id$.  Then $\phi=\phi_2\circ\phi_1$, and $z_{\phi_2}=z_2\circ\phi_1^{-1}\in\Hom(G,C)$.

This completes the proof.
\end{proof}
\begin{rem}
	It was essential that we knew $\Aut_{c'}(G)$ was normal.  In general, direct factors of $\Hom(G,Z(G))$ can be used to give exact factorizations of $\Aut_c(G)$, but in general it is not necessary that any of the subgroups so produced are normal.
\end{rem}

\begin{example}\label{ex:smallgroup-32-2}
	Define \[G=\cyc{x,y,z\ | \ x^4=y^4=z^2=1, \ [x,y]=z, \ [x,z]=[y,z]=1}.\]  This is a group of order 32, identified in \citet{GAP4} as SmallGroup(32,2).  It is easy to verify that $G$ is purely non-abelian, $Z(G)=\cyc{x^2,y^2,z}\cong\BZ_2^3$ and $G'=\cyc{z}\cong\BZ_2$.  Clearly $G'=Z(G)\cap G'$ is a direct factor of $Z(G)$.  Therefore the theorem applies, and the kernel $N$ is non-trivial and has a complement in $\Aut(\D(G))$.  Note that $G$ is not a stem group, so $\Lambda(G)$ and $\Lambda_c(G)$ are not normal.
\end{example}

Note that the structure, and even membership, of $\Aut(\D(D_{2n}))$ when $n\equiv 2 \bmod 4$ is not determined by our results, since then $D_{2n}\cong\BZ_2\times D_n$ is not purely non-abelian.  The completion of the general case, which is analogous to the case for groups \cite{BCM,Bi08}, is left for a future paper.  For now, we prove only the following special case.

\begin{prop}\label{sumsplits}
	Let $G=C\times H$, where $C,H$ have no common non-trivial direct factors.  Then \[\Aut(\D(G)) \cong \Aut(\D(C))\times\Aut(\D(H))\]
	if and only if $(|Z(\Gamma_C)|, |Z(\Gamma_H)|)=1$.
\end{prop}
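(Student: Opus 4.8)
The plan is to show that the natural injection $\iota\colon\Aut(\D(C))\times\Aut(\D(H))\hookrightarrow\Aut(\D(G))$ of Proposition \ref{prop:prodsub} is onto exactly when $(|Z(\Gamma_C)|,|Z(\Gamma_H)|)=1$; since $\iota$ is injective and $\Aut(\D(G))$ is finite, failure of surjectivity forces the two groups to have different orders, so this suffices. A preliminary computation: because $\Gamma_C=\widehat C\times C$ with $\widehat C$ abelian, $Z(\Gamma_C)=\widehat C\times Z(C)$, so $|Z(\Gamma_C)|=|\widehat C|\,|Z(C)|$, and the hypothesis is equivalent to the four simultaneous coprimalities of $|\widehat C|$ and $|Z(C)|$ against $|\widehat H|$ and $|Z(H)|$, a condition I will call $(\star)$. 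Throughout one uses that an element $\iota(\psi_C,\psi_H)$ has matrix the ``tensor'' (under $\D(C\times H)\cong\D(C)\otimes\D(H)$) of the matrices of $\psi_C,\psi_H$, so that $\Img(\iota)$ meets $\Lambda(G)\cong\End(Z(G))$, the copy of $\BCh{G}$, and the copy of $\Aut(G)$ exactly in the block-diagonal elements $\Lambda(C)\times\Lambda(H)$, $\BCh{C}\times\BCh{H}$, and $\Aut(C)\times\Aut(H)$ respectively (for the last, invoking Lemma \ref{lem:theta-aut1} to see the diagonal pieces are automorphisms).

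For the ``only if'' direction I argue by contraposition. If a prime $\ell$ divides both $|Z(\Gamma_C)|$ and $|Z(\Gamma_H)|$, it divides one of $|\widehat C|,|Z(C)|$ and one of $|\widehat H|,|Z(H)|$, so up to the symmetry $C\leftrightarrow H$ there are three cases. If $\ell\mid|Z(C)|$ and $\ell\mid|Z(H)|$, choose order-$\ell$ subgroups $A\le Z(C)$, $B\le Z(H)$ and let $(p,\id,1,\id)\in\Lambda(G)$ be attached to an isomorphism $\widehat A\cong B$; since $Z(C)\cap Z(H)=1$ in $G$, this is not block-diagonal and so lies outside $\Img(\iota)$. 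If $\ell\mid|\widehat C|$ and $\ell\mid|\widehat H|$, a nonzero homomorphism $C\to\widehat H$ gives a non-block-diagonal $r\in\BCh{G}$, and Proposition \ref{prop:bch-auts} puts the corresponding automorphism outside $\Img(\iota)$. If $\ell\mid|Z(C)|$ and $\ell\mid|\widehat H|$, a nonzero homomorphism $\xi\colon H\to Z(C)$ yields the automorphism $(c,h)\mapsto(c\,\xi(h),h)$ of $G$, which is not block-diagonal, hence not in $\Img(\iota)$ when viewed inside $\Aut(\D(G))$. In every case $\Img(\iota)$ is a proper subgroup, so $\Aut(\D(G))\not\cong\Aut(\D(C))\times\Aut(\D(H))$.

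For the ``if'' direction, assume $(\star)$ and let $\psi=\morphquad\in\Aut(\D(G))$. Since $\psi$ is an isomorphism, $u$ is Hopf (Theorem \ref{autcents}) and $A,B\le Z(G)=Z(C)\times Z(H)$; as $|Z(C)|$ and $|Z(H)|$ are coprime, $A$, $B$ and the defining isomorphism $\widehat A\cong B$ all split along $C,H$, so $p=p_C\otimes p_H$. For $r,u^*,v$ I pass to the restriction $\bar\psi\in\Aut(\Gamma_G)$ of Lemma \ref{lem:restriction}. The key point is that under $(\star)$ the groups $\Gamma_C$ and $\Gamma_H$ have no common direct factor --- a common non-abelian one would be a common direct factor of $C$ and $H$, while a common cyclic one would inject into both $\Gamma_C^{\,\mathrm{ab}}$ and $\Gamma_H^{\,\mathrm{ab}}$, forcing a common prime dividing $|\widehat C|$ and $|\widehat H|$ --- and that $\Hom(\Gamma_H,Z(\Gamma_C))=\Hom(\Gamma_C,Z(\Gamma_H))=0$, which again follows from $(\star)$ once one notes $|\widehat{\Gamma_C}|=|\widehat C|^2$ and $|Z(\Gamma_C)|=|\widehat C|\,|Z(C)|$. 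By \citet{BCM}'s description of $\Aut(\Gamma_C\times\Gamma_H)$, these two facts give $\Aut(\Gamma_G)=\Aut(\Gamma_C)\times\Aut(\Gamma_H)$, so $\bar\psi$ is block-diagonal; reading off its matrix via Lemma \ref{lem:restriction} shows $r$ and $v$ are block-diagonal and $u|_{\widehat G}$ is block-diagonal. To promote the last to $u^*$ itself being block-diagonal, I apply the same argument to the flip $F(\psi)=(p^*,v^*,r^*,u^*)$, which is again an automorphism of $\D(G)$ by Theorem \ref{flipclosure} and whose ``$v$''-slot is $u^*$. With $p,u^*,r,v$ all block-diagonal, $\psi$ is the tensor product of quadruples $\psi_C,\psi_H$; restricting $\psi$ to the Hopf subalgebras $\D(C),\D(H)$ of $\D(C)\otimes\D(H)$ shows these are Hopf endomorphisms, and bijectivity of $\psi$ makes each an automorphism, so $\psi=\iota(\psi_C,\psi_H)$.

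The main obstacle is the ``if'' direction, and inside it the two arithmetic facts that under $(\star)$ the groups $\Gamma_C,\Gamma_H$ share no direct factor and that the off-diagonal $\Hom$-groups in the $\Aut(\Gamma_C\times\Gamma_H)$ description vanish: this is exactly where the precise strength of $(|Z(\Gamma_C)|,|Z(\Gamma_H)|)=1$ enters, and correctly bookkeeping $|\widehat{\Gamma_C}|$ against $|Z(\Gamma_C)|$ is the crux. The use of the flip to upgrade block-diagonality of $u|_{\widehat G}$ to that of $u^*$ is the other step requiring care; everything else is routine verification with the matrix calculus for $\Hom(\D(G),\D(H))$.
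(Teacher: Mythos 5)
Your proof is correct and follows essentially the same route as the paper: the subgroup from Proposition \ref{prop:prodsub}, splitting of $p$ from the coprimality of $|Z(C)|$ and $|Z(H)|$, restriction to $\Gamma_G$ together with the description of $\Aut(\Gamma_C\times\Gamma_H)$ from \citep{BCM} to handle $r$ and $v$, and the flip to upgrade this to $u^*$. The only (harmless) variation is in the ``only if'' direction, where you exhibit explicit non-block-diagonal automorphisms in each of the three coprimality-failure cases, whereas the paper deduces the same conditions from the splittings of $\Aut(G)$, $\BCh{G}$ and $\End(Z(G))$.
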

\begin{proof}
	By Proposition \ref{prop:prodsub}, $\Aut(\D(C))\times\Aut(\D(H))$ is always a subgroup, so we need only determine when every automorphism is of this form.

	Note that $(|Z(C)||\widehat{C}|, |Z(H)||\widehat{H}|)=1$ is equivalent to the following three conditions all holding:
	\begin{enumerate}
		\item $(|\widehat{C}|,|\widehat{H}|)=1$;
		\item $(|Z(C)|,|Z(H)|)=1$;
		\item $(|\widehat{C}|,|Z(H)|)=(|\widehat{H}|,|Z(C)|)=1$.
	\end{enumerate}

	Now we observe that $\BCh{C\times H}\cong\BCh{C}\times\BCh{H}$ $\iff$ $(|\widehat{H}|,|\widehat{K}|)=1$, and that $\End(Z(H)\times Z(C))\cong\End(Z(H))\times\End(Z(C))$ $\iff$ $(|Z(H)|,|Z(C)|)=1$.
	
	Suppose that $\Aut(\D(G))=\Aut(\D(C))\times\Aut(\D(H))$.  Then under the imbedding of $\Aut(G)\hookrightarrow\Aut(\D(G))$, we conclude that $\Aut(G)=\Aut(C)\times\Aut(H)$.  Since $C,H$ have no common factors, by \citep{BCM} this holds $\iff$ both $\Hom(H,Z(C))$ and $\Hom(C,Z(H))$ are trivial.  By properties of the derived subgroup, this is equivalent to $\Hom(\widehat{H},Z(C))$ and $\Hom(\widehat{C},Z(H))$ both being trivial, which is in turn equivalent to $(|\widehat{H}|,|Z(C)|)=(|\widehat{C}|,|Z(H)|)=1$.
	
	Combining this with the observations on $\BCh{G}$ and $\End(Z(G))$, we conclude that $\Aut(\D(G))=\Aut(\D(C))\times\Aut(\D(H))$ implies that $(|Z(C)||\widehat{C}|, |Z(H)||\widehat{H}|)=1$.
	
	Now suppose that $(|Z(C)||\widehat{C}|, |Z(H)||\widehat{H}|)=1$.  It suffices to show that for $\morphquad\in\Aut(\D(G))$ that all four components split over $C,H$.  We have already observed that this is true for $p,r$, so we need only prove it is true for $u,v$.  By flipping, we need only prove it is true for $v$.
	
	Any $\psi\in\Aut(\D(G))$ restricts to an element of $\Aut(\Gamma_G)=\Aut(\Gamma_C\times\Gamma_H)$.  By the assumptions on $C,H$ and the orders, we conclude that $\Gamma_C$ and $\Gamma_H$ have no common direct factors.  Thus by \citep{BCM} the restriction of $\psi$ is an element of \[\left( \begin{array}{cc} \Aut(\Gamma_C)& \Hom(\Gamma_H,Z(\Gamma_C))\\ \Hom(\Gamma_C,Z(\Gamma_H))& \Aut(\Gamma_H) \end{array} \right).\]
	
	By the order assumptions we conclude that the $\Hom$ terms are all trivial.  On the other hand, in terms of $\morphquad$ we compute the off-diagonal terms to be
	\begin{align*}
		t(\chi_H,h)&=\pi_{\widehat{C}}(u(\chi_H,\varepsilon))\pi_C(v(h,1)),\\
		w(\chi_C,c)&= \pi_{\widehat{H}}(u(\varepsilon,\chi_C))\pi_H(v(1,c)),
	\end{align*}
	for all $\chi_H\in\widehat{H},\chi_C\in\widehat{C},h\in H,c\in C$.  Since these maps are trivial, we conclude that the projections involving $v$ are both trivial.  In particular, we conclude that $v\in\End(C)\times\End(H)$, as desired.
	
	This completes the proof.
\end{proof}
\begin{example}
	The proposition fails to apply to any of the following groups.
	\begin{enumerate}
		\item $A\times A_4$ for any abelian group $A$ with order divisible by 3, since $\widehat{A_4}\cong\BZ_3$.
		\item $A\times S_n$ for any abelian group $A$ of even order, since $\widehat{S_n}\cong\BZ_2$.  In particular, it fails to apply to $D_{2n}\cong \BZ_2\times D_n$ whenever $n\equiv 2\bmod 4$.
	\end{enumerate}
\end{example}
\begin{example}
The proposition applies to all of the following groups.
\begin{enumerate}
	\item $A\times G$, where $A$ is any abelian group and $G$ is any centerless perfect group.
	\item $A\times S_n$ and $A\times D_{2n}$, where $A$ is any abelian group of odd order.
	\item $A\times A_4$ where $A$ is any abelian group with order coprime to 3.
	\item $G\times H$ when $G$ and $H$ have coprime order.  This is the case if $G$ is a $p$-group and $H$ a $q$-group for distinct primes $p,q$, for example.  If $G$ or $H$ have abelian direct factors then computing the automorphisms of their doubles may itself be non-trivial, however.
\end{enumerate}
\end{example}

%-------------------  Section: Modules ---------------------
\section{Action on modules}\label{sec:modules}
In this section we describe the action of $\Aut(\D(G))$ on the category of representations $\Rep(\D(G))$.

Given algebras $A,B$ and an algebra map $f\colon A\to B$, we have the induced action $\Rep(B)\to\Rep(A)$ where $V\in\Rep(B)$ is sent to the $A$-module with the same vector space structure and action defined by $a.v=f(a).v$ for all $a\in A,v\in V$.  This is an equivalence of tensor categories when $A,B$ are Hopf algebras and $f$ is an isomorphism of Hopf algebras.  Modules are identified by their characters as usual, and we use the two interchangeably whenever convenient.

It is well known that the irreducible modules of $\D(G)$ are parametrized by (equivalence classes of) pairs $(s,\eta)$ where $s\in G$, and $\eta$ is an irreducible character of $C_G(s)$.  The pair depends only on $\operatorname{class}(s)$, the conjugacy class of $s$ in $G$, and the isomorphism class of $\eta$. See \citet{DPR} for further details.  More generally, $\eta$ can be any character of $C_G(s)$, and we obtain a module of $\D(G)$ in the same way: by inducing from $C_G(s)$ up to $G$.  As such, we use the notation $(s,\eta)$ to denote any such module, even when $\eta$ is not irreducible.  For $(s,\eta)$, the action of $e_g\# h$ is trivial whenever $g\not\in\operatorname{class}(s)$. We say that such a module is supported at $\operatorname{class}(s)$, or simply at $s$ when convenient.

The following description of the character of $(s,\chi)$ was proven in \citep{KSZ2}.
\begin{lem}\label{lem:ksz2}
	Consider a module of $\D(G)$ of the form $(s,\eta)$, with character $\chi$.  Let $\operatorname{class}(s)$ be the conjugacy class of $s$ in $G$.  For any $y\in\operatorname{class}(s)$ let $y'\in G$ be such that $s^{y'}=y$.  If $g\in\operatorname{class}(s)$ and $a=\gamma^{g'}$ for some $\gamma\in C_G(s)$, then $\chi(e_g\# a) = \eta(\gamma)$.  In all other cases, $\chi(e_g\# a)=0$.
\end{lem}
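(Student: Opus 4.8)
The plan is to work directly from the standard realization of the module $(s,\eta)$ as an induced module, and to compute the trace of $e_g\#a$ block by block. Write $C=C_G(s)$, fix a $\mathbbm{k}C$-module $W$ affording $\eta$ with representation $\rho\colon C\to\operatorname{GL}(W)$, and recall that $(s,\eta)$ is realized on $V=\mathbbm{k}G\otimes_{\mathbbm{k}C}W$, on which $\D(G)$ acts so that $\mathbbm{k}^G$ acts through the $G$-grading and $\mathbbm{k}G$ acts by the left regular action on the first tensor factor. The chosen elements $y'$ (one for each $y\in\operatorname{class}(s)$, with $s^{y'}=y$) form a transversal for $C$ in $G$, giving the decomposition $V=\bigoplus_{y\in\operatorname{class}(s)}V_y$ with $V_y=y'\otimes W\cong W$. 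The first step is to record, carefully from the multiplication of $\D(G)$, that on this decomposition $e_g\#1$ is the projection $\pi_g$ onto $V_g$ (and is $0$ when $g\notin\operatorname{class}(s)$), while $\varepsilon\#h$ sends $V_y$ into $V_{y^h}$ by $y'\otimes w\mapsto (y^h)'\otimes\rho(c)w$ with $c=((y^h)')^{-1}hy'\in C$.

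Next I would use the factorization $e_g\#a=(e_g\#1)(\varepsilon\#a)$ to see that $e_g\#a$ acts as $\pi_g$ precomposed with the map induced by $a$. Since $a$ carries $V_y$ into $V_{y^a}$, the restriction of $\pi_g\circ a$ to $V_y$ vanishes unless $y^a=g$, and it contributes to the trace of $e_g\#a$ only through a diagonal block, i.e.\ only when moreover $y=g$. Combining $y=g$ and $y^a=g$ forces $g\in\operatorname{class}(s)$ and $a\in C_G(g)$. Hence whenever $g\notin\operatorname{class}(s)$, or $g\in\operatorname{class}(s)$ but $a\notin C_G(g)$, there is no diagonal block and $\chi(e_g\#a)=0$; this handles the ``all other cases'' clause once one checks the elementary fact that, for $g\in\operatorname{class}(s)$, the condition $a\in C_G(g)$ is equivalent to $a=\gamma^{g'}$ for some $\gamma\in C$, with $\gamma=(g')^{-1}ag'$ then being the unique such element (it lies in $C$ precisely because $a$ centralizes $g=s^{g'}$).

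In the surviving case $g\in\operatorname{class}(s)$ and $a=\gamma^{g'}$ with $\gamma\in C$, only the block $V_g\to V_g$ contributes, and the action formula of the first step gives that this block is $\rho(c)$ with $c=((g^a)')^{-1}ag'=(g')^{-1}ag'=\gamma$ (using $g^a=g$ and $(g^a)'=g'$). Taking traces yields $\chi(e_g\#a)=\operatorname{tr}\rho(\gamma)=\eta(\gamma)$. The same argument applies verbatim when $\eta$ is a non-irreducible character, since irreducibility of $W$ was never used; alternatively one may reduce to the irreducible case by additivity of characters.

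I expect the only real obstacle to be notational bookkeeping rather than anything substantive. One must keep the convention $g^x=xgx^{-1}$ fixed throughout, note that the co-opposite coalgebra structure on $\mathbbm{k}^G$ is irrelevant here since only the algebra structure of $\D(G)$ enters the module action, and track the element $\gamma\in C$ cleanly through the identifications $V_y\cong W$ so that it emerges exactly as $(g')^{-1}ag'$, with no stray conjugation. There is no $3$-cocycle to contend with, so the formula really is this clean; essentially all the work lies in arranging the notation so that the single surviving diagonal block is visibly $\rho(\gamma)$.
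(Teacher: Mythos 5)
Your proof is correct: the paper itself gives no argument for this lemma, citing it to \citep{KSZ2}, and your block-by-block trace computation on the induced module $\mathbbm{k}G\otimes_{\mathbbm{k}C_G(s)}W$ is exactly the standard argument behind that citation, with the key identifications (that $e_g\#a=(e_g\#1)(\varepsilon\#a)$ acts as $\pi_g$ after the permutation of blocks induced by $a$, and that the single surviving diagonal block is $\rho((g')^{-1}ag')$) all handled correctly. Your remark that irreducibility of $W$ is never used is also consistent with the paper's use of the lemma for non-irreducible $\eta$.
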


Let $\morphquad\in\Hom(\D(G),\D(H))$ have $B$ central.  Let $s\in H$ and let $\eta$ be any irreducible character of $C_H(s)$.  By Schur's Lemma we may write $\eta(z)=\mu(z)\eta(1)$ for all $z\in Z(H)$, where $\mu$ is some linear character of $Z(H)$.  Since $B$ is central, $B\leq C_H(s)$ for every $s\in H$.  The restriction of $\mu$ to $B$ is therefore also a linear character of $B$.  Define the following value for each such choice of $s,\eta$:
\begin{align}\label{eq:P-def}
	P(\eta) = \frac{1}{\dim\eta}\sum_{a\in A}\eta(p(e_a)) a.
\end{align}
Note that $P(\eta)$ is in $\mathbbm{k}A$ by construction.  Indeed, by the previous remarks and properties of $p$ it follows that there is a unique $a'\in A$ such that $\eta(p(e_a))=\dim\eta\delta_{a,a'}$, so that $P(\eta)\in A$ for all choices of $s,\eta$.

For the remainder of this section we will find it convenient to write the components in terms of their coefficients.  We use the original notation from \citep{ABM}:
\begin{align}
	p(e_g) &= \sum_{h\in H}\lambda(g,h)h,\\
	r(g) &= \sum_{h\in H}\omega(g,h)e_h.
\end{align}
Note that $\omega$ is a bilinear bicharacter.  In this notation, the values of $\psi=\morphquad\in\Hom(\D(G),\D(H))$ may be expressed as
\begin{align}
	\psi(e_g\# h) &= \sum_{y\in H, z\in B} \lambda(g u^*(y)^{-1},z)\omega(h,y)e_y\# z v(h),
\end{align}
whenever $u$ is a morphism of Hopf algebras.

Under mild assumptions, the induced map on representations will send a simple module to a (not necessarily simple) module supported on a single conjugacy class.
\begin{lem}\label{lem:mod-support}
	Let $\psi\in\Hom(\D(G),\D(H))$ have $A,B$ central.  Then the induced map $\Rep(\D(H))\to\Rep(\D(G))$ sends an irreducible module $(s,\eta)$ to a module supported at $P(\eta)u^*(s)$.
\end{lem}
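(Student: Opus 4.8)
The plan is to compute directly how $e_g\# a$ acts on the pulled-back module $(s,\eta)^\psi$ using the explicit formula for $\psi(e_g\# a)$ together with the character description in Lemma \ref{lem:ksz2}, and then read off from the supporting conjugacy class that the result is supported at $P(\eta)u^*(s)$. First I would recall that, since $A,B$ are central, Corollary \ref{thetahopf1} (or Theorem \ref{autcents} in the automorphism case) guarantees $u$ is a morphism of Hopf algebras, so $u^*$ is an honest group homomorphism $H\to G$ and the coefficient formula
\begin{align*}
	\psi(e_g\# a) &= \sum_{y\in H,\ z\in B} \lambda\!\left(g\, u^*(y)^{-1},z\right)\omega(a,y)\, e_y\# z\, v(a)
\end{align*}
is available. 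I would then apply the character $\chi$ of $(s,\eta)$ to this expression: by Lemma \ref{lem:ksz2}, $\chi(e_y\# z v(a))$ vanishes unless $y\in\operatorname{class}(s)$, so the support of $(s,\eta)^\psi$ as a $\D(G)$-module is contained in $\{g\in G : u^*(y)=?\}$ — more precisely, the action of $e_g\#a$ on the pullback is nonzero only when $e_g$ survives, and since the $\D(G)$-side support is detected by which $e_g$ act nontrivially, I would track exactly which $g$ make the relevant $\lambda$-coefficients nonzero.

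The key computation is the following: the coefficient $\lambda(g\,u^*(y)^{-1},z)$ is nonzero only when $g\,u^*(y)^{-1}\in A$, i.e. $g\in A\,u^*(y)$. Because $A$ is central and $p$ is an isomorphism $\mathbbm{k}^A\cong\mathbbm{k}B$, for the particular linear character $\mu$ of $Z(H)$ attached to $\eta$ by Schur's Lemma there is (as noted just before the lemma) a unique $a'\in A$ with $\eta(p(e_{a'}))=\dim\eta$; this $a'$ is exactly $P(\eta)\in A\subseteq Z(G)$. Restricting the sum to $y\in\operatorname{class}(s)$ and using that $\eta$ is supported (after pullback through $v$) on the conjugacy class of $s$, I would show that the only $g$ for which $e_g$ acts nontrivially on $(s,\eta)^\psi$ are those of the form $g = P(\eta)\,u^*(y)$ with $y\in\operatorname{class}(s)$. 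Since $u^*$ is a homomorphism, $u^*(\operatorname{class}(s)) = \operatorname{class}(u^*(s))^{?}$ lies inside $\operatorname{class}_G(u^*(s))$, and multiplying by the central element $P(\eta)$ gives precisely $\operatorname{class}_G(P(\eta)u^*(s))$. Hence the pullback module is supported at $P(\eta)u^*(s)$, as claimed.

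The main obstacle I anticipate is bookkeeping the interaction between the three group-homomorphism pieces ($u^*$ on the $H$-coordinate, $v$ on the acting-group coordinate, $p$ on the dual/central coordinate) and confirming that the centrality of $A$ and $B$ is exactly what lets $P(\eta)$ be a single well-defined central element rather than a formal sum — the paragraph preceding the lemma already does most of this, so I would invoke it. A secondary subtlety is making sure the statement "supported at $P(\eta)u^*(s)$" is interpreted at the level of conjugacy classes in $G$: I would note explicitly that $P(\eta)$ is central so $\operatorname{class}_G(P(\eta)u^*(s)) = P(\eta)\operatorname{class}_G(u^*(s))$, and that $u^*$ maps $\operatorname{class}_H(s)$ into $\operatorname{class}_G(u^*(s))$, so no ambiguity arises. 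Everything else is a routine unwinding of Lemma \ref{lem:ksz2} against the coefficient formula for $\psi$.
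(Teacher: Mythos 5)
Your proposal is correct and follows essentially the same route as the paper's proof: both evaluate the character of the pullback using the coefficient formula for $\psi$ together with Lemma \ref{lem:ksz2}, pair the $\lambda$-coefficients against the central character $\mu$ attached to $\eta$ to isolate the unique element $P(\eta)\in A$, and then use centrality of $A$ to conclude the support lies in $\operatorname{class}(P(\eta)u^*(s))$. No gaps.
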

\begin{proof}
	Let $\xi$ denote the character of the image module and $\chi$ the character of $(s,\eta)$.  Then by definition
	\begin{align}\label{eq:xi-base}
		\xi(e_g\# h) &= \sum_{z\in B}\sum_{y\in H} \lambda(g u^*(y)^{-1},z)\omega(h,y)\chi(e_y\# z v(h)).
	\end{align}
	By the preceding Lemma, the non-zero terms in the sum require $y\in \operatorname{class}(s)$ and $zv(h)=\gamma^{y'}$ for some $\gamma\in C_H(s)$; note that by centrality of $A$ this last condition is equivalent to saying $v(h)=\gamma^{y'}$ for some $\gamma\in C_H(s)$. Therefore we may write
	\begin{align}
		\xi(e_g\# h) &= \omega(h,s)\sum_{\substack{y\in \operatorname{class}(s)\\ v(h)=\gamma^{y'}}}\left( \sum_{z\in B} \lambda(g u^*(y)^{-1},z)\mu(z)\right)\eta(\gamma)\nonumber\\
		 &= \omega(h,s)\sum_{\substack{y\in \operatorname{class}(s)\\ v(h)=\gamma^{y'}}}\left(\frac{1}{\dim\eta}\eta(p(e_{gu^*(y)^{-1}}))\right)\eta(\gamma)\label{eq:xi-base2}
	\end{align}
As previously noted, the term in the parentheses is non-zero, and equal to one, precisely when $g u^*(y)^{-1} = P(\eta)$.  Combining, and using the assumption that $A\leq Z(G)$, we have $g\in P(\eta)u^*(\operatorname{class}(s))\subseteq \operatorname{class}(P(\eta)u^*(s))$.  Thus $\xi$ is supported at $P(\eta)u^*(s)$ as claimed.
\end{proof}
In equation \eqref{eq:xi-base2} we see that the summation over $y$ is counting the number of elements $y\in\operatorname{class}(s)$ such that $g u^*(y)^{-1}=P(\eta)$.  This number could be something other than 1 in general.  Indeed, it may happen that $P(\eta)u^*(\operatorname{class}(s))$ is a proper subset of $\operatorname{class}(P(\eta)u^*(s))$.

Clearly we have that $u^*(\operatorname{class}(s))=\operatorname{class}(u^*(s))$ if and only if for all $g\in\operatorname{class}(u^*(s))$ there exists a unique $y\in\operatorname{class}(s)$ with $g=u^*(y)$.  The addition of the central element $P(\eta)$ as above does not change this characterization.

\begin{lem}\label{lem:y-uniqueness-1}
	Let $G,H$ be finite groups and suppose \[\morphquad\in\Hom(\D(G),\D(H))\] is such that $G=A\Img(u^*)$ and $A\leq Z(G)$.  Then \[u^*(\operatorname{class}(s))=\operatorname{class}(u^*(s))\] for all $s\in H$.
\end{lem}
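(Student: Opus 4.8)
The statement is essentially immediate once the notation is unwound, so the plan is short.  Recall that in the present setting $u^*$ denotes the group homomorphism $H\to G$ dual to the Hopf map $u$, so that $u^*(s)\in G$ and $\operatorname{class}(u^*(s))$ is taken in $G$.  First I would dispose of the inclusion $u^*(\operatorname{class}(s))\subseteq\operatorname{class}(u^*(s))$, which needs no hypothesis on $A$: for every $h\in H$ one has $u^*(hsh^{-1})=u^*(h)\,u^*(s)\,u^*(h)^{-1}$, and since $u^*(h)\in\Img(u^*)\subseteq G$ this element lies in the $G$-conjugacy class of $u^*(s)$.

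For the reverse inclusion I would take an arbitrary $g\in\operatorname{class}(u^*(s))$ and write $g=x\,u^*(s)\,x^{-1}$ with $x\in G$.  Using the hypothesis $G=A\Img(u^*)$, I would factor $x=a\,u^*(h)$ with $a\in A$ and $h\in H$; since $A\leq Z(G)$ the central element $a$ commutes past $u^*(s)$ and cancels, leaving
\[
g \;=\; u^*(h)\,u^*(s)\,u^*(h)^{-1} \;=\; u^*(hsh^{-1}) \;\in\; u^*(\operatorname{class}(s)).
\]
Combining the two inclusions gives the claimed equality.

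There is no real obstacle: the only care needed is the bookkeeping of which group each conjugacy class lives in, together with the fact that $u^*$ is a homomorphism (so that conjugation is preserved).  The entire content of the hypotheses is that any conjugator $x\in G$ may be replaced, modulo the central subgroup $A$, by one lying in $\Img(u^*)$, and this is precisely what collapses the image of a single $H$-class onto a single $G$-class.  (Note that these hypotheses hold automatically whenever $\psi$ is injective, by Lemma \ref{lem:factorizations}, and in particular for every automorphism.)
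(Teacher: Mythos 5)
Your proof is correct and follows essentially the same route as the paper's: the paper also writes an arbitrary conjugator $x\in G$ as $a\,u^*(h)$ with $a\in A$ central and observes that $x\,u^*(s)\,x^{-1}=u^*(hsh^{-1})$, from which both inclusions follow. Your write-up merely makes the two inclusions explicit, which the paper leaves to the reader.
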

\begin{proof}
	For any $x\in G$ we may write $x=a u^*(h)$ for some $a\in A,h\in H$.  Then for any $s\in H$ we have
	\begin{align}
		x u^*(s) x^{-1} &= u^*(hsh^{-1}).
	\end{align}
	The result now follows.
\end{proof}
Note that the result is stronger than the observation that $\Img(u^*)\trianglelefteq G$ under the same assumptions.

It is easily checked $\morphquad\in\Hom(\D(G),\D(H))$ has $A,B$ central and $v\circ u^*$ normal if and only if $\morphquad$ is flippable. From this we get the description of the induced action on the representation categories.

\begin{thm}\label{thm:modaction}
	Let $\morphquad\in\Hom(\D(G),\D(H))$ be a flippable homomorphism such that $A\Img(u^*)=G$.  Then the induced map $\Rep(\D(H))\to\Rep(\D(G))$ is given by
	\begin{align}\label{eq:modaction}
		(s,\eta)\mapsto (P(\eta)\, u^*(s), r^*(s)*(\eta\circ v)).
	\end{align}
\end{thm}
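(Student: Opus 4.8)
The plan is to read off both components of the image module from Lemmas~\ref{lem:mod-support} and~\ref{lem:y-uniqueness-1}, which both apply: a flippable homomorphism has $A,B$ central by Lemma~\ref{lem:flipcons} (and $u$ a morphism of Hopf algebras), and we assume $A\Img(u^*)=G$. By Lemma~\ref{lem:mod-support} the induced functor sends the irreducible $(s,\eta)$ to a module supported at the single element $P(\eta)\,u^*(s)$, hence of the form $(P(\eta)\,u^*(s),\zeta)$ for one character $\zeta$ of $C_G(P(\eta)\,u^*(s))=C_G(u^*(s))$; the two centralizers coincide since $P(\eta)\in A\le Z(G)$. Everything then reduces to identifying $\zeta$ with $r^*(s)*(\eta\circ v)$.

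First I would check that the right-hand side even makes sense, i.e.\ that $v$ carries $C_G(u^*(s))$ into $C_H(s)$, so that $\eta\circ v$ is a genuine character of $C_G(u^*(s))$. This is a consequence of flippability: dualizing relation~\eqref{eq:vurel} for the flip $\flipquad\in\Hom(\D(H),\D(G))$, exactly as in the proof of Lemma~\ref{lem:alphaisv}, gives $v\bigl(u^*(t)\,g\,u^*(t)^{-1}\bigr)=t\,v(g)\,t^{-1}$ for all $g\in G$ and $t\in H$; taking $t=s$ and $g\in C_G(u^*(s))$ yields $v(g)=s\,v(g)\,s^{-1}$. Since $r^*(s)\in\widehat G$ is a linear character, $r^*(s)*(\eta\circ v)$ is then simply the pointwise product $\gamma\mapsto r^*(s)(\gamma)\,\eta(v(\gamma))$ on $C_G(u^*(s))$.

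To pin down $\zeta$ I would recover it from the character $\xi$ of the image module via Lemma~\ref{lem:ksz2}: with $g_0=P(\eta)\,u^*(s)$ one has $\zeta(\gamma)=\xi(e_{g_0}\#\gamma)$ for $\gamma\in C_G(g_0)$, and I would evaluate the right-hand side using~\eqref{eq:xi-base2}. With $g=g_0$ we get $g\,u^*(y)^{-1}=P(\eta)\,u^*(sy^{-1})$, which lies in $A$ and equals $P(\eta)$ precisely when $u^*(y)=u^*(s)$; by the discussion following~\eqref{eq:P-def} the inner factor $\frac{1}{\dim\eta}\eta\bigl(p(e_{g u^*(y)^{-1}})\bigr)$ is therefore $1$ when $u^*(y)=u^*(s)$ and $0$ otherwise. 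Combined with the standing constraint $y\in\operatorname{class}(s)$ and the bijectivity of $y\mapsto u^*(y)$ on $\operatorname{class}(s)$ (Lemma~\ref{lem:y-uniqueness-1} together with the observation immediately preceding it), the only surviving term is $y=s$, for which one takes $y'=1$; the element of $C_H(s)$ appearing in~\eqref{eq:xi-base2} is then $v(\gamma)$, which indeed lies in $C_H(s)$ by the previous paragraph. Hence~\eqref{eq:xi-base2} collapses to $\omega(\gamma,s)\,\eta(v(\gamma))=r^*(s)(\gamma)\,\eta(v(\gamma))=\bigl(r^*(s)*(\eta\circ v)\bigr)(\gamma)$, so $\zeta=r^*(s)*(\eta\circ v)$, as claimed.

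I expect the crux to be exactly this collapse of the sum over $y$: it is the one place where all three hypotheses must be used in concert --- flippability to control $v$ on centralizers and to guarantee that $A,B$ are central, the condition $A\Img(u^*)=G$ to invoke Lemma~\ref{lem:y-uniqueness-1} and force $y=s$, and the precise combinatorial description of $P(\eta)$ from~\eqref{eq:P-def}. The remaining points --- that the image is a single module supported at $P(\eta)\,u^*(s)$, and that convolving with the linear character $r^*(s)$ is just pointwise multiplication --- are routine once Lemmas~\ref{lem:mod-support} and~\ref{lem:ksz2} are in hand.
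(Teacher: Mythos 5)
Your proof is correct, and it follows the paper's strategy in all essentials: Lemma~\ref{lem:flipcons} to get $A,B$ central and $u$ Hopf, Lemma~\ref{lem:mod-support} for the support, the flip's version of \eqref{eq:vurel} (equivalently, normality of $v\circ u^*$) to see that $v$ carries $C_G(u^*(s))$ into $C_H(s)$, and the uniqueness of $y\in\operatorname{class}(s)$ with $g\,u^*(y)^{-1}=P(\eta)$ coming from Lemma~\ref{lem:y-uniqueness-1}. The one genuine divergence is in the final character identification: the paper matches the character $\xi$ of the image against the character $\beta$ of $(P(\eta)u^*(s),r^*(s)*(\eta\circ v))$ at \emph{every} basis element $e_g\# h$ with $g$ ranging over the whole conjugacy class, which forces the computation with $g'=a\,u^*(x)$, $v(t)=v(h)^{x^{-1}}$ and $y_0=s^x$. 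You instead observe that a module supported at a single class is already pinned down by its restriction to $e_{g_0}\#\,C_G(g_0)$ for the fixed representative $g_0=P(\eta)u^*(s)$ (via Lemma~\ref{lem:ksz2} with $y'=1$), so you only evaluate \eqref{eq:xi-base2} there; the sum then collapses immediately to the single term $y=s$, $\gamma=v(h)$. This is a legitimate and cleaner route — it buys you a shorter verification at the cost of making explicit the (standard, and implicit in the paper's conventions) fact that every $\D(G)$-module supported at $\operatorname{class}(g_0)$ is of the form $(g_0,\zeta)$ and is determined by $\zeta(\gamma)=\xi(e_{g_0}\#\gamma)$.
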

\begin{proof}
	By the preceeding lemma, note that the assumptions guarantee $u^*(\operatorname{class}(s))=\operatorname{class}(u^*(s))$, and that $P(\eta)$ is defined and central.  So for $g\in\operatorname{class}(P(\eta)u^*(s))$ let $y_g\in\class(s)$ be such that $g=P(\eta)u^*(y_g)$.  By Lemma \ref{lem:mod-support} the image of $(s,\eta)$ is supported at $P(\eta)u^*(s)$.  We need only verify that the character of the image is equivalent to the one induced from $r^*(s)*(\eta\circ v)$.  
	
	First we must justify that $\eta\circ v$ is a well-defined character of \[C_G(P(\eta)u^*(s))=C_G(u^*(s)).\]  So suppose that $u^*(s)=x u^*(s) x^{-1}$ for some $x\in G$.  Applying $v$ to both sides, we have $vu^*(x) = v(x) vu^*(s) v(x^{-1})$.  By the assumption that $v\circ u^*$ is normal, we may write $vu^*(s)=s c$ for some $c\in C_H(\Img(v))$.  Thus we have $sc=vu^*(s)=v(x) s v(x)^{-1} c$, or equivalently that $v(x)\in C_H(s)$.  Thus $v$ maps $C_G(P(\eta)u^*(s))$ to $C_H(s)$ and $\eta\circ v$ is well-defined, as desired.
	
	Now let $\xi$ be the character of the image, and let $\beta$ be the character of $(P(\eta)\, u^*(s), r^*(s)*(\eta\circ v))$.  By Lemma \ref{lem:ksz2}, 
	\begin{align}
		\beta(e_g\# h)=\omega(h,s)\eta(v(t))=r^*(s)(h)\eta(v(t))
	\end{align}
whenever $g\in\operatorname{class}(P(\eta)u^*(s))$ and $h=t^{g'}$ for some $t \in C_G(P(\eta)u^*(s)) = C_G(u^*(s))$, and is zero otherwise.  On the other hand, by equation \eqref{eq:xi-base2} we have
	\begin{align}
		\xi(e_g\# h) = \omega(h,s) \eta(\gamma) = r^*(s)(h)\eta(\gamma)
	\end{align}
whenever $g\in\operatorname{class}(P(\eta)u^*(s))$ and $v(h)=\gamma^{y_{g}'}$ for some $\gamma\in C_H(s)$, and is zero otherwise.  So we need only show that the values $\eta(v(t))$ and $\eta(\gamma)$ coincide.  Indeed, we will show that $v(t)$ and $\gamma$ coincide.

So suppose then that we have $t^{g'}=h$, or equivalently $t=h^{(g')^{-1}}$.  Then $v(t)=v(h^{(g')^{-1}})$.  Since $A\Img(u^*)=G$ and $A\leq Z(G)$ we may find $x\in G$ and $a\in A$ such that $g'=a u^*(x)$.  Thus $h^{(g')^{-1}} = h^{u^*(x^{-1})}$.  Since $v\circ u^*$ is assumed normal, we then have $v(t) = v(h)^{x^{-1}}$.  Now set $y_0=s^{x}$.  Then $g u^*(y_0)^{-1} = g u^*(s^x)^{-1} = g (u^*(s)^{g'})^{-1}=P(\eta)$.  Since there is a unique $y\in\operatorname{class}(s)$ satisfying $gu^*(y)^{-1}=P(\eta)$, we conclude that $y_0$ is this value and that $y_{g}'$ can be taken equal to $x$.  This proves that $\eta(v(t))=\eta(\gamma)$, and so completes the proof.
\end{proof}
The theorem applies, in particular, to any element of $\Aut(\D(G))$.  In the case of an automorphism, the image module is necessarily simple.  Such a morphism need not be an isomorphism, however, as demonstrated by $\ds{\morph{0}{0}{p}{0}}\in\End(\D(A))$, where $A$ is an abelian group and $p$ is an isomorphism $\widehat{A}\cong A$.

We note, as an example, that the autoequivalences obtained from the subgroup $\BCG$ of $\Aut(\D(G))$ include the autoequivalences obtained by lifting the tensor autoequivalences of $\operatorname{Vec}_G$ through the center construction, as detailed in \citep[Example 6.11]{NR14}.  Indeed, the action in the Theorem is, after a few obvious definitions for notation, the matrix action
$$\morph{v^*}{r^*}{p^*}{u^*}(\eta,s)=(v^*\eta+r^*s,p^*\eta+u^*s),$$
where $(\eta,s)$ is the irreducible module $(s,\eta)$ with labels reversed. This is precisely the naively expected action of the flip on the module category.

\bibliographystyle{plainnat}
\bibliography{../references}

\def\cprime{$'$}
\begin{thebibliography}{27}
\providecommand{\natexlab}[1]{#1}
\providecommand{\url}[1]{\texttt{#1}}
\expandafter\ifx\csname urlstyle\endcsname\relax
  \providecommand{\doi}[1]{doi: #1}\else
  \providecommand{\doi}{doi: \begingroup \urlstyle{rm}\Url}\fi

\bibitem[Adney and Yen(1965)]{AY65}
J.~E. Adney and Ti~Yen.
\newblock Automorphisms of a {$p$}-group.
\newblock \emph{Illinois J. Math.}, 9:\penalty0 137--143, 1965.
\newblock ISSN 0019-2082.
\newblock URL \url{http://projecteuclid.org/euclid.ijm/1256067587}.

\bibitem[Agore et~al.(2014)Agore, Bontea, and Militaru]{ABM}
A.~L. Agore, C.~G. Bontea, and G.~Militaru.
\newblock Classifying bicrossed products of {H}opf algebras.
\newblock \emph{Algebr. Represent. Theory}, 17\penalty0 (1):\penalty0 227--264,
  2014.
\newblock ISSN 1386-923X.
\newblock \doi{10.1007/s10468-012-9396-5}.
\newblock URL \url{http://dx.doi.org/10.1007/s10468-012-9396-5}.

\bibitem[Alev and Chamarie(1992)]{ACh}
J.~Alev and M.~Chamarie.
\newblock D\'erivations et automorphismes de quelques alg\`ebres quantiques.
\newblock \emph{Comm. Algebra}, 20\penalty0 (6):\penalty0 1787--1802, 1992.
\newblock ISSN 0092-7872.
\newblock \doi{10.1080/00927879208824431}.
\newblock URL \url{http://dx.doi.org/10.1080/00927879208824431}.

\bibitem[Alev and Dumas(1996)]{AD}
J.~Alev and F.~Dumas.
\newblock Rigidit\'e des plongements des quotients primitifs minimaux de
  {$U_q({\rm sl}(2))$} dans l'alg\`ebre quantique de {W}eyl-{H}ayashi.
\newblock \emph{Nagoya Math. J.}, 143:\penalty0 119--146, 1996.
\newblock ISSN 0027-7630.
\newblock URL \url{http://projecteuclid.org/euclid.nmj/1118771972}.

\bibitem[Andruskiewitsch and Dumas(2008)]{AnDu}
Nicol{\'a}s Andruskiewitsch and Fran{\c{c}}ois Dumas.
\newblock On the automorphisms of {$U_q^+(\germ g)$}.
\newblock In \emph{Quantum groups}, volume~12 of \emph{IRMA Lect. Math. Theor.
  Phys.}, pages 107--133. Eur. Math. Soc., Z\"urich, 2008.
\newblock \doi{10.4171/047-1/4}.
\newblock URL \url{http://dx.doi.org/10.4171/047-1/4}.

\bibitem[Bidwell(2008)]{Bi08}
J.~N.~S. Bidwell.
\newblock Automorphisms of direct products of finite groups. {II}.
\newblock \emph{Arch. Math. (Basel)}, 91\penalty0 (2):\penalty0 111--121, 2008.
\newblock ISSN 0003-889X.
\newblock \doi{10.1007/s00013-008-2653-5}.
\newblock URL \url{http://dx.doi.org/10.1007/s00013-008-2653-5}.

\bibitem[Bidwell et~al.(2006)Bidwell, Curran, and McCaughan]{BCM}
J.~N.~S. Bidwell, M.~J. Curran, and D.~J. McCaughan.
\newblock Automorphisms of direct products of finite groups.
\newblock \emph{Arch. Math. (Basel)}, 86\penalty0 (6):\penalty0 481--489, 2006.
\newblock ISSN 0003-889X.
\newblock \doi{10.1007/s00013-005-1547-z}.
\newblock URL \url{http://dx.doi.org/10.1007/s00013-005-1547-z}.

\bibitem[Courter(2012)]{Co}
R.~Courter.
\newblock \emph{Computing {H}igher {I}ndicators for the {D}ouble of a
  {S}ymmetric {G}roup}.
\newblock PhD thesis, University of Southern California, 2012.
\newblock \href{http://arxiv.org/abs/1206.6908}{\tt arXiv:1206.6908}.

\bibitem[Dijkgraaf et~al.(1990)Dijkgraaf, Pasquier, and Roche]{DPR}
R.~Dijkgraaf, V.~Pasquier, and P.~Roche.
\newblock Quasi {H}opf algebras, group cohomology and orbifold models.
\newblock \emph{Nuclear Phys. B Proc. Suppl.}, 18B:\penalty0 60--72 (1991),
  1990.
\newblock ISSN 0920-5632.
\newblock \doi{10.1016/0920-5632(91)90123-V}.
\newblock URL \url{http://dx.doi.org/10.1016/0920-5632(91)90123-V}.
\newblock Recent advances in field theory (Annecy-le-Vieux, 1990).

\bibitem[GAP()]{GAP4}
GAP.
\newblock \emph{{GAP -- Groups, Algorithms, and Programming, Version 4.6.5}}.
\newblock The GAP~Group, 2013.
\newblock \href{http://www.gap-system.org}{\tt www.gapsystem.org}.

\bibitem[Goff et~al.(2007)Goff, Mason, and Ng]{GMN}
Christopher Goff, Geoffrey Mason, and Siu-Hung Ng.
\newblock On the gauge equivalence of twisted quantum doubles of elementary
  abelian and extra-special 2-groups.
\newblock \emph{J. Algebra}, 312\penalty0 (2):\penalty0 849--875, 2007.
\newblock ISSN 0021-8693.
\newblock \doi{10.1016/j.jalgebra.2006.10.022}.
\newblock URL \url{http://dx.doi.org/10.1016/j.jalgebra.2006.10.022}.

\bibitem[Guralnick and Montgomery(2009)]{GM}
Robert Guralnick and Susan Montgomery.
\newblock Frobenius-{S}chur indicators for subgroups and the {D}rinfel\cprime d
  double of {W}eyl groups.
\newblock \emph{Trans. Amer. Math. Soc.}, 361\penalty0 (7):\penalty0
  3611--3632, 2009.
\newblock ISSN 0002-9947.
\newblock \doi{10.1090/S0002-9947-09-04659-5}.
\newblock URL \url{http://dx.doi.org/10.1090/S0002-9947-09-04659-5}.

\bibitem[Hall(1940)]{Ha40}
P.~Hall.
\newblock The classification of prime-power groups.
\newblock \emph{J. Reine Angew. Math.}, 182:\penalty0 130--141, 1940.
\newblock ISSN 0075-4102.

\bibitem[Iovanov et~al.(2012)Iovanov, Mason, and Montgomery]{IMM}
M.~Iovanov, G.~Mason, and S.~Montgomery.
\newblock \ensuremath{FSZ}-groups and {F}robenius-{S}chur {I}ndicators of
  {Q}uantum {D}oubles.
\newblock \href{http://arxiv.org/abs/1208.4153}{\tt arxiv.math/1208.4153},
  2012.

\bibitem[It{\^o}(1951)]{Ito51}
Noboru It{\^o}.
\newblock Remarks on factorizable groups.
\newblock \emph{Acta Sci. Math. Szeged}, 14:\penalty0 83--84, 1951.
\newblock ISSN 0001-6969.

\bibitem[Kashina et~al.(2006)Kashina, Sommerh{\"a}user, and Zhu]{KSZ2}
Yevgenia Kashina, Yorck Sommerh{\"a}user, and Yongchang Zhu.
\newblock On higher {F}robenius-{S}chur indicators.
\newblock \emph{Mem. Amer. Math. Soc.}, 181\penalty0 (855):\penalty0 viii+65,
  2006.
\newblock ISSN 0065-9266.
\newblock \doi{10.1090/memo/0855}.
\newblock URL \url{http://dx.doi.org/10.1090/memo/0855}.

\bibitem[Keilberg(2012)]{K}
Marc Keilberg.
\newblock Higher indicators for some groups and their doubles.
\newblock \emph{J. Algebra Appl.}, 11\penalty0 (2):\penalty0 1250030, 38, 2012.
\newblock ISSN 0219-4988.
\newblock \doi{10.1142/S0219498811005543}.
\newblock URL \url{http://dx.doi.org/10.1142/S0219498811005543}.

\bibitem[Keilberg(2014)]{K2}
Marc Keilberg.
\newblock Higher {I}ndicators for the {D}oubles of {S}ome {T}otally
  {O}rthogonal {G}roups.
\newblock \emph{Comm. Algebra}, 42\penalty0 (7):\penalty0 2969--2998, 2014.
\newblock ISSN 0092-7872.
\newblock \doi{10.1080/00927872.2013.775651}.
\newblock URL \url{http://dx.doi.org/10.1080/00927872.2013.775651}.

\bibitem[Linchenko and Montgomery(2000)]{LM}
V.~Linchenko and S.~Montgomery.
\newblock A {F}robenius-{S}chur theorem for {H}opf algebras.
\newblock \emph{Algebr. Represent. Theory}, 3\penalty0 (4):\penalty0 347--355,
  2000.
\newblock ISSN 1386-923X.
\newblock \doi{10.1023/A:1009949909889}.
\newblock URL \url{http://dx.doi.org/10.1023/A:1009949909889}.
\newblock Special issue dedicated to Klaus Roggenkamp on the occasion of his
  60th birthday.

\bibitem[Montgomery(1993)]{Mo}
Susan Montgomery.
\newblock \emph{Hopf algebras and their actions on rings}, volume~82 of
  \emph{CBMS Regional Conference Series in Mathematics}.
\newblock Published for the Conference Board of the Mathematical Sciences,
  Washington, DC; by the American Mathematical Society, Providence, RI, 1993.
\newblock ISBN 0-8218-0738-2.

\bibitem[Nikshych and Riepel(2014)]{NR14}
Dmitri Nikshych and Brianna Riepel.
\newblock Categorical {L}agrangian {G}rassmannians and {B}rauer--{P}icard
  groups of pointed fusion categories.
\newblock \emph{J. Algebra}, 411:\penalty0 191--214, 2014.
\newblock ISSN 0021-8693.
\newblock \doi{10.1016/j.jalgebra.2014.04.013}.
\newblock URL \url{http://dx.doi.org/10.1016/j.jalgebra.2014.04.013}.

\bibitem[Radford(1990)]{Rad90}
David~E. Radford.
\newblock The group of automorphisms of a semisimple {H}opf algebra over a
  field of characteristic {$0$} is finite.
\newblock \emph{Amer. J. Math.}, 112\penalty0 (2):\penalty0 331--357, 1990.
\newblock ISSN 0002-9327.
\newblock \doi{10.2307/2374718}.
\newblock URL \url{http://dx.doi.org/10.2307/2374718}.

\bibitem[Rotman(1995)]{Rot99}
Joseph~J. Rotman.
\newblock \emph{An introduction to the theory of groups}, volume 148 of
  \emph{Graduate Texts in Mathematics}.
\newblock Springer-Verlag, New York, fourth edition, 1995.
\newblock ISBN 0-387-94285-8.
\newblock \doi{10.1007/978-1-4612-4176-8}.
\newblock URL \url{http://dx.doi.org/10.1007/978-1-4612-4176-8}.

\bibitem[Sage and Vega(2012)]{SV}
Daniel~S. Sage and Maria~D. Vega.
\newblock Twisted {F}robenius-{S}chur indicators for {H}opf algebras.
\newblock \emph{J. Algebra}, 354:\penalty0 136--147, 2012.
\newblock ISSN 0021-8693.
\newblock \doi{10.1016/j.jalgebra.2011.12.026}.
\newblock URL \url{http://dx.doi.org/10.1016/j.jalgebra.2011.12.026}.

\bibitem[Shoda(1928)]{Shoda28}
K.~Shoda.
\newblock {\"U}ber die automorphismen einer endlichen abelschen gruppe.
\newblock \emph{Math. Ann.}, 100\penalty0 (1):\penalty0 674--686, 1928.

\bibitem[Takeuchi(1981)]{Ta81}
Mitsuhiro Takeuchi.
\newblock Matched pairs of groups and bismash products of {H}opf algebras.
\newblock \emph{Comm. Algebra}, 9\penalty0 (8):\penalty0 841--882, 1981.
\newblock ISSN 0092-7872.
\newblock \doi{10.1080/00927878108822621}.
\newblock URL \url{http://dx.doi.org/10.1080/00927878108822621}.

\bibitem[Wiegold and Williamson(1980)]{WW80}
James Wiegold and Alan~G. Williamson.
\newblock The factorisation of the alternating and symmetric groups.
\newblock \emph{Math. Z.}, 175\penalty0 (2):\penalty0 171--179, 1980.
\newblock ISSN 0025-5874.
\newblock \doi{10.1007/BF01674447}.
\newblock URL \url{http://dx.doi.org/10.1007/BF01674447}.

\end{thebibliography}
\end{document}